\documentclass[a4paper,11pt]{amsart}
\usepackage{a4wide}
\usepackage[T1]{fontenc}
\usepackage[utf8]{inputenc}
\usepackage{lmodern}
\usepackage{amsmath,amssymb,mathtools,microtype,enumitem,url}
\usepackage{flafter,float}
\usepackage{tikz}
\usetikzlibrary{arrows.meta,positioning,calc}
\usepackage[colorlinks=true,linkcolor=blue,citecolor=blue,urlcolor=blue]{hyperref}
\theoremstyle{plain}
\newtheorem{theorem}{Theorem}[section]
\newtheorem{proposition}[theorem]{Proposition}
\newtheorem{lemma}[theorem]{Lemma}
\newtheorem{corollary}[theorem]{Corollary}
\newtheorem*{theoremA}{Theorem A}
\newtheorem*{theoremB}{Theorem B}
\theoremstyle{definition}

\theoremstyle{remark}

\newcommand{\R}{\mathbb R}
\newcommand{\N}{\mathbb N}
\newcommand{\supp}{\operatorname{supp}}
\newcommand{\Span}{\operatorname{span}}
\newcommand{\one}{\mathbf 1}
\newcommand{\eps}{\varepsilon}
\newcommand{\restr}[1]{{\restriction_{#1}}}
\numberwithin{equation}{section}
\title[A consistent failure of separable quotients]
{A consistent failure of separable quotients\protect\\for pointwise function spaces}
\author[T.~Kania]{Tomasz Kania}
\address[T.~Kania]{Mathematical Institute\\Czech Academy of Sciences\\\v Zitn\'{a} 25\\115 67 Praha 1\\Czech Republic and Institute of Mathematics and Computer Science\\Jagiellonian University\\{\L}ojasiewicza 6, 30-348 Krak\'{o}w, Poland}
\email{kania@math.cas.cz, tomasz.marcin.kania@gmail.com}
\thanks{RVO: 67985840.}
\author[J.~K\k{a}kol]{Jerzy K\k{a}kol}
\address[J.~K\k{a}kol]{Department of Functional Analysis\\Faculty of Mathematics and Computer Science\\Adam Mickiewicz University\\Uniwersytetu Pozna\'{n}skiego 4\\61-614 Pozna\'{n}, Poland}
\email{kakol@amu.edu.pl}
\date{}
\subjclass[2020]{Primary 54C35, 46A03; Secondary 03E35, 54D30, 28C15}
\keywords{Pointwise function space, separable quotient, metrisable quotient, Jensen diamond, inverse limit, finitely supported measure}
\begin{document}
\raggedbottom
\begin{abstract}
Assuming Jensen's diamond principle, we construct an infinite compact zero-dimensional space $K$ such that $C_p(K)$ has no infinite-dimensional Hausdorff separable linear quotient. The space $K$ is separable and crowded, has weight $\aleph_1$ and cardinality $2^{\aleph_1}$, and is an Efimov space. We construct $K$ as an inverse limit of compact metrisable spaces indexed by the countable ordinals. At each nontrivial successor step, the projection has two-point fibres over a chosen closed set and singleton fibres elsewhere; this changes the weak-star limit of a selected sequence of finitely supported measures. We also prove that, for compact $X$, existence of an infinite-dimensional separable quotient of $C_p(X)$ is equivalent to existence of an infinite-dimensional metrisable quotient.
\end{abstract}
\maketitle

\section{Introduction}

For a Tychonoff space $X$, let $C_p(X)$ denote the real vector space of continuous functions on $X$, endowed with pointwise convergence. K\k{a}kol and \'Sliwa \cite{KS2018} asked whether $C_p(K)$ has an infinite-dimensional separable quotient for every infinite compact space $K$. Banakh, K\k{a}kol and \'Sliwa formulated the corresponding metrisable-quotient question in \cite[Problem~2]{BKS2018}. We give a consistent negative answer to both questions. The construction excludes metrisable quotients; a compactness argument for finitely supported measures then excludes separable quotients.

These questions are pointwise counterparts of the Rosenthal--Lacey theorem: for infinite compact $K$, the Banach space $C(K)$ has a quotient isomorphic to $c_0$ or $\ell_2$; see \cite{Rosenthal,Lacey}. The pointwise problem has positive answers under several topological hypotheses. In particular, an infinite compact space containing a nontrivial convergent sequence or a copy of $\beta\N$ has an infinite-dimensional metrisable pointwise quotient \cite[Theorem~1 and Corollary~3]{BKS2018}. An \emph{Efimov space} is an infinite compact Hausdorff space containing neither of these two spaces. Thus any compact counterexample must be an Efimov space. For background, see Hart~\cite{Hart}.

Inverse-limit constructions of compact spaces under additional set-theoretic hypotheses have a substantial history. Fedorchuk's constructions \cite{Fedorchuk} are a principal antecedent of this approach, and Talagrand \cite{Talagrand} constructed, under CH, a compact space whose Banach space of continuous functions has the Grothendieck property: every weak-star convergent sequence in its dual is weakly convergent. We use successor projections with two-point fibres over a closed set, which may be uncountable, and singleton fibres elsewhere. The construction must preserve the measure convergence arranged at preceding stages; Lemma~\ref{lem:simultaneous-successor} provides this step.

Our main results are the following two theorems.

\begin{theoremA}
Assume $\diamondsuit_{\omega_1}$. There exists a separable crowded compact zero-dimensional space $K$ of weight $\aleph_1$ and cardinality $2^{\aleph_1}$ such that $C_p(K)$ has no infinite-dimensional Hausdorff separable linear quotient, and hence no infinite-dimensional Hausdorff metrisable linear quotient. In particular, $K$ is an Efimov space.
\end{theoremA}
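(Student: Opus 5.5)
The approach is to reduce the absence of separable quotients to a statement about sequences of finitely supported measures on $K$, and then to build $K$ by a $\diamondsuit$-guided inverse limit that destroys every bad sequence.

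\textbf{The reduction.} I would use the known dual characterisation. For compact $X$, the dual of $C_p(X)$ is the space $L(X)$ of finitely supported signed measures. Suppose $C_p(X)$ has an infinite-dimensional metrisable quotient. Then there is a sequence $(\mu_n)$ in $L(X)$ with the following properties: the $\mu_n$ are linearly independent, or at least normalised with $\|\mu_n\|=1$ and infinite-dimensional span, and $\mu_n(f)\to 0$ for every $f\in C(X)$. This is the Banakh--K\k{a}kol--\'Sliwa criterion. So for metrisable quotients it suffices to build $K$ on which no such weak-star null sequence of normalised finitely supported measures exists.

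For separable quotients I would use the equivalence promised in the abstract, proved for compact $X$. Let $T\colon C_p(X)\to E$ be a quotient map onto a separable Hausdorff infinite-dimensional space. Pick a countable dense set in $E$ and lift it. Passing to an infinite-dimensional countable family of functionals that separates points on the separable quotient, one extracts functionals $\mu_n\in L(X)$ that are pointwise bounded on $C(X)$. The key input here is that the supports and total variations can be controlled: $L(X)$ restricted to sets bounded on $C(X)$ is weak-star compact-ish in $C(X)^*$. A weak-star limit argument via the compactness of $X$ then gives a sequence with weak-star convergent normalised differences. This yields a metrisable quotient. So the theorem reduces to the no-null-sequence property.

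\textbf{The construction.} Build $K=\varprojlim_{\alpha<\omega_1}K_\alpha$, where $K_0$ is the Cantor set and each $K_\alpha$ is a compact metrisable zero-dimensional space. At limits take inverse limits. Each space $K_\alpha$ carries a countable dense set that is preserved along the limit, which gives separability. Fix a $\diamondsuit$-sequence coding, at stage $\alpha$, a candidate sequence $(\mu^\alpha_n)$ of normalised finitely supported measures on $K_\alpha$. Here one can assume the supports are lifted canonically, for instance inside a fixed countable dense set, so that $\diamondsuit$ can guess them. If $\mu^\alpha_n\to 0$ weak-star on $K_\alpha$, apply Lemma~\ref{lem:simultaneous-successor}. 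This gives $K_{\alpha+1}\to K_\alpha$ with two-point fibres over a closed set $F$ and singleton fibres elsewhere, such that for some clopen $U$ of $K_{\alpha+1}$ the values $\mu_n(\one_U)$ fail to converge to $0$ along a subsequence. The lemma also guarantees that previously arranged non-convergences are preserved. A natural choice of $F$ is the accumulation set of the supports, with $U$ splitting the masses.

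\textbf{Verification.} Suppose $(\mu_n)$ is a normalised weak-star null sequence in $L(K)$. Its supports form a countable set, and by a closure argument there is a club of $\alpha$ where the sequence projects faithfully to $K_\alpha$ and remains null there. Since $C(K_\alpha)$ embeds in $C(K)$, nullness passes down to $K_\alpha$. The $\diamondsuit$ sequence guesses it at some stationary $\alpha$, and the successor step kills nullness in $C(K_{\alpha+1})\subset C(K)$, which is a contradiction. The remaining properties are checked separately.
\begin{itemize}
\item \emph{Weight $\aleph_1$.} Each stage is metrisable and there are $\aleph_1$ stages.
\item \emph{Crowdedness.} Start from a crowded $K_0$ and use that the projections are irreducible-like.
\item \emph{Cardinality $2^{\aleph_1}$.} Branching over nonempty $F$ occurs cofinally, so every point splits $\aleph_1$ often.
\item \emph{Efimov.} This follows from Banakh--K\k{a}kol--\'Sliwa once no metrisable quotient exists.
\end{itemize}

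The main obstacle is the preservation in the successor step. Splitting over $F$ must not accidentally create new convergence, or restore convergence for earlier-killed sequences, in the limit. I expect this to be exactly the content of Lemma~\ref{lem:simultaneous-successor}. The second difficulty is the reflection argument ensuring that nullness on $K$ implies nullness on the guessed $K_\alpha$ at stationarily many $\alpha$.
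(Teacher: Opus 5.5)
\textbf{The reduction you start from is false, and this is the central gap.} A sequence $(\mu_n)\subseteq L(X)$ with $\|\mu_n\|=1$ and $\mu_n\to0$ weak-star is a finitely supported Josefson--Nissenzweig sequence. By \cite[Theorem~1]{BKS2019}, its existence is equivalent to $C_p(X)$ having a quotient isomorphic to $(c_0)_p$, which is only one particular kind of metrisable quotient. The absence of such sequences does not exclude metrisable quotients. Take $X=\beta\N$: $C(\beta\N)=\ell_\infty$ is Grothendieck, so such a sequence in $L(\beta\N)$ would be weakly null in $\ell_1(\beta\N)$, hence norm null by the Schur property. Yet $C_p(\beta\N)$ has an infinite-dimensional metrisable quotient by \cite[Corollary~3]{BKS2018}. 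The paper flags exactly this point: the companion Talagrand compactum has no such sequences, and the metrisable-quotient question stays open for it. So a construction that only destroys weak-star nullness of guessed sequences, as your successor step does with $\mu_n(\one_U)\not\to0$, yields at most Corollary~\ref{cor:fsjn}, not Theorem~A. Your separable-to-metrisable step fails for the same reason ($\beta\N$ again). The paper's Theorem~B is instead a dichotomy on the sets $S_{r,m}=A\cap\Delta_{r,m}(X)$. Either some $S_{r,m}$ is not norm compact, which gives a Josefson--Nissenzweig sequence. Or all are norm compact; then every member of $A=Z_L^\perp$ is supported in one countable set, $A$ has countable Hamel dimension, and the quotient is already metrisable.

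What must be excluded is the criterion of Lemma~\ref{lem:dual}: no countably infinite-dimensional subspace $A\subseteq L(K)$ may be relatively weak-star closed. So diamond has to predict an entire subspace, coded as a sequence of measures on $2^{\omega_1}$ and read off at a stage $\theta$ where $\pi_{\theta,\omega_1}$ is injective on $\bigcup_n\supp v_n$. You cannot assume the supports lie in a fixed countable dense set. The construction must then place in $\overline A^{\,w^*}$ a nonzero finitely supported measure outside $A$. In the paper this is $\delta_{y_1}-\delta_{y_0}$ with $y_0\neq y_1$ and $\pi_{\theta,\omega_1}(y_0)=\pi_{\theta,\omega_1}(y_1)$; it is not in $A$ because push-forward is injective on $A$. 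Producing it needs three ingredients your plan lacks.
\begin{enumerate}
\item For every clopen $D\subseteq K_\theta$, a subsequence of lifts of elements of $A$ converging to the localised limit $\frac12\int_{F\cap D}(\delta_{(x,1)}-\delta_{(x,0)})\,d\lambda$.
\item Preservation of these limits uniformly over all norm-preserving lifts through every later stage, namely \eqref{eq:uniform-invariant} with Lemmas~\ref{lem:limit-invariant} and~\ref{lem:final-closure}, so that they apply to the actual measures of $A$ on $K$.
\item The coupling measure $\gamma$ on $K^2$ of Section~\ref{sec:coupling}, which converts $\mu_D\in\overline A^{\,w^*}$ for all $D$ into $f(y_0)=f(y_1)$ for all $f\in A_\perp$ and all $(y_0,y_1)\in\supp\gamma$.
\end{enumerate}
Mere non-convergence at one stage produces no such element.

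Some of the remaining properties also have problems. Your cardinality argument is unjustified, since nothing forces a given thread to pass through cofinally many $F_\alpha$. The paper instead derives the absence of $G_\delta$ points from the absence of convergent sequences and then builds a binary tree of closed $G_\delta$ sets. Weight $\aleph_1$ also needs non-metrisability, not just a count of stages. Separability and crowdedness need the irreducibility of Lemma~\ref{lem:irreducible-successor}.
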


The reduction from separable to metrisable quotients uses the following unconditional fact.

\begin{theoremB}
Let $X$ be compact Hausdorff. Then $C_p(X)$ has an infinite-dimensional Hausdorff separable linear quotient if and only if it has an infinite-dimensional Hausdorff metrisable linear quotient.
\end{theoremB}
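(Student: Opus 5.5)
The implication from metrisable to separable is the easy direction, and I will do it first. Let $q\colon C_p(X)\to E$ be a continuous open linear surjection onto a Hausdorff metrisable space $E$. Fix a countable base $(U_n)$ of neighbourhoods of $0$ in $E$. Each $q^{-1}(U_n)$ contains a basic pointwise neighbourhood determined by a finite set $F_n\subseteq X$ and some $\eps_n>0$.

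Put $D=\bigcup_n F_n$, which is countable. If $f\restr{D}=0$, then $q(tf)\in U_n$ for all $t$ and all $n$, so $q(f)=0$ because $E$ is Hausdorff. Hence $q$ factors through the restriction map $C_p(X)\to C_p(X)\restr{D}\subseteq\R^D$. The factor map is continuous, since the topology of $E$ is generated by conditions on $D$. The space $\R^D$ is second countable, so its subspace $C_p(X)\restr{D}$ is separable. Therefore $E$, a continuous image of it, is separable.

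For the converse I will work in the dual pair $\langle C_p(X),L(X)\rangle$, where $L(X)$ is the space of finitely supported measures. Since $C_p(X)$ carries the weak topology $\sigma(C(X),L(X))$, a Hausdorff quotient $C_p(X)/N$ with $N$ closed carries the weak topology induced by $M=N^\perp\subseteq L(X)$, and $M$ is $\sigma(L(X),C(X))$-closed. Such a quotient is metrisable iff $M$ has countable Hamel dimension, and it is infinite-dimensional iff $M$ is. So, given an infinite-dimensional separable quotient with annihilator $M$, the aim is to find an infinite-dimensional weak$^*$-closed subspace $M'\subseteq M$ of countable dimension. Then $C_p(X)/M'_\perp$ is the required metrisable quotient, and it is still a quotient of $C_p(X)$ because $M'_\perp\supseteq N$.

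Separability enters as follows. If $(q(f_k))$ is dense in $E$, then every $\mu\in M$ is determined by the numbers $(\mu(f_k))_k$. For $n\in\N$ let
\[
L_n=\{\mu\in L(X):|\supp\mu|\le n,\ \|\mu\|\le n\}.
\]
By compactness of $X$, each $L_n$ is weak$^*$-compact, since limits of measures with at most $n$ atoms again have at most $n$ atoms. Consequently $M_n=M\cap L_n$ is compact. The countable family $(f_k)$ separates its points, so $M_n$ is compact metrisable. Thus $M=\bigcup_n M_n$ is $\sigma$-compact-metrisable in the weak$^*$ topology.

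The main obstacle is extracting from this an infinite-dimensional closed subspace of countable dimension, because the weak$^*$ closure of a span is usually much larger than the span. My first attempt is to use the compact metrisable pieces $M_n$ as follows. If some $M_n$ spans an infinite-dimensional space, pick independent $\mu_j\in M_n$ with $\mu_j\to\mu$. Then pass to differences and a subsequence whose supports converge in $X$, so that the span of $\{\mu_j\}$ together with its limit points is weak$^*$-closed, with closedness checked on each $L_m$. Otherwise each $M_n$ spans a finite-dimensional space, so $M$ itself has countable dimension and $M'=M$ works.
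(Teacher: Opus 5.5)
Your metrisable-to-separable direction is correct: factoring $q$ through restriction to the countable set $D$ is a good substitute for the paper's embedding into $\R^\N$. Your set-up of the converse is exactly the paper's, namely the annihilator $M$, weak-star compactness of $L_n$, and compact metrisability of $M_n=M\cap L_n$ via the dense sequence $(f_k)$. The gap is the final step, which carries the whole difficulty.

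Passing to ``a subsequence whose supports converge in $X$'' is not available. $X$ is an arbitrary compact space, so sequences of points need not have convergent subsequences; in an Efimov space no sequence of distinct points does. Weak-star convergence of $(\mu_j)$ in $M_n$ tells you nothing about the atoms: if $\|\mu_j-\mu\|\geqslant\eps$, the atoms of $\mu_j-\mu$ cannot stay in a finite set, and nothing forces them to converge.

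Moreover, the claim that the span of the sequence and its limits is relatively weak-star closed is precisely what has to be proved. Testing closedness on each $L_m$ does not suffice. Let $y_n\to y$ in $2^\omega$ and put $v_n=\delta_{y_n}+n^{-2}\sum_{k=1}^n\delta_{x_{n,k}}$, with all points distinct from each other and from $y$. Then each $V\cap L_m$ lies in a finite-dimensional subspace and is closed, yet $\delta_y\in\overline{V}^{\,w^*}\setminus V$ for $V=\Span\{v_n\}$.

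The missing idea is to compare the weak-star and the \emph{variation-norm} topologies on the compact metrisable pieces, instead of asking whether some $M_n$ spans an infinite-dimensional space.

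If every $M_n$ is norm compact, take a countable norm-dense subset of each. Their supports give a countable set $S$ carrying every member of $M$, since an atom of $\mu$ at $x\notin S$ keeps $\mu$ at distance at least $|\mu(\{x\})|$ from the dense set. Hence $M\subseteq L(S)$ has countable dimension, and the given quotient is already metrisable. Your ``otherwise'' case is just the special case where each $M_n$ sits in a finite-dimensional space.

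If some $M_n$ is not norm compact, the identity from its metrisable weak-star topology to the norm topology has a point of discontinuity. This gives $\mu_j\to\mu$ weak-star with $\|\mu_j-\mu\|\geqslant\eps$, so the normalised differences form a finitely supported Josefson--Nissenzweig sequence inside $M$. The paper then invokes the theorem of Banakh, K\k{a}kol and \'Sliwa that such a sequence yields a quotient of $C_p(X)$ isomorphic to $(c_0)_p$. That theorem is exactly the extraction of a closed countably-dimensional subspace you are attempting by hand, and it needs a genuine argument rather than convergence of supports.
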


All quotients in this paper carry their quotient topology, and all compact spaces are Hausdorff. The term \emph{crowded} means that the space has no isolated points. The principle $\diamondsuit_{\omega_1}$ is recalled in Section~\ref{sec:limits}. Since diamond holds in the constructible universe by Jensen's theorem \cite{Jensen}, the consistency of ZFC implies the consistency of ZFC together with the conclusion of Theorem~A; see also \cite[Theorem~1.2]{Rinot}. We do not obtain the conclusion from CH alone or from $\mathrm{MA}+\neg\mathrm{CH}$: the prediction used in the proof is stronger than CH, and CH is consistent with the failure of diamond \cite[Theorem~1.4]{Rinot}.

A related sufficient condition for a metrisable quotient is the
\emph{two-disjoint-copies property} (2DCP): there are nonempty compact
sets $(X_n)_{n<\omega}$ in $X$ such that each $X_n$ contains two
disjoint copies of $X_{n+1}$ \cite[Theorem~2]{BKS2018}.
The hereditarily weakly Koszmider compacta\footnote{This means that,
for every closed $L\subseteq X$, every bounded linear operator
$T:C(L)\to C(L)$ is a weak multiplier: $(Tf_n)(x_n)\to0$ whenever
$(f_n)\subseteq C(L)$ is bounded and pairwise disjoint,
$(x_n)\subseteq L$, and $f_n(x_n)=0$ for all $n$.}
of Barbeiro and Fajardo \cite{BarbeiroFajardo} are consistent Efimov
examples for which $C(X)$ is Grothendieck
\cite[Theorem~2.4]{Koszmider}. They fail 2DCP: interchanging two
disjoint infinite homeomorphic closed subspaces would give an operator
on their union which is not a weak multiplier.

In the companion paper \cite{KaniaKakol2DCP}, we use diamond to obtain
a Talagrand compactum $T$ which fails 2DCP while $C(T)$ remains
Grothendieck and its weak-star dual unit ball contains no copy of
$\beta\N$. That construction leaves the existence of an arbitrary
infinite-dimensional metrisable quotient of $C_p(T)$ open.
The present argument excludes such quotients by controlling the
weak-star closure of every countably infinite-dimensional subspace
of $L(K)$. We use direct diamond prediction as in the companion
construction; the additional clopen localisations will give the
nonzero finitely supported measures required for this argument.

We outline the proof of Theorem~A. A metrisable quotient of $C_p(K)$
would give a countably infinite-dimensional relatively weak-star closed
subspace $A$ of the finitely supported measures on $K$. The union of the
supports of its members is countable. Consequently, some projection onto
a countable stage is injective on this union and on $A$.
Diamond allows us to anticipate the projected subspace while constructing
that stage. We arrange that the weak-star closure of $A$ in $C(K)'$
contains a nonzero measure $\delta_y-\delta_x$ whose projection is zero.
Relative closedness would place this measure in $A$, contradicting
injectivity of the projection on $A$.

The main work is to preserve this obstruction through the remaining
stages. At the stage where a subspace is anticipated, we choose disjointly
supported signed measures whose positive and negative parts converge to
the same probability $\lambda$. We then choose a compact set $F$ with
$\lambda(F)>0$ and replace each $x\in F$ by two points $(x,0)$ and $(x,1)$.
For each clopen set $D$ in that stage, the new coordinates are chosen so
that a subsequence of the lifted signed measures converges, after
normalisation, to the difference of the two copies of the restriction
of $\lambda$ to $F\cap D$.

At subsequent stages, these limits are preserved uniformly over all
measures having the prescribed projection and norm. At countable limit
stages, a diagonal choice of subsequences preserves convergence. At the
final stage, each continuous function factors through a countable stage,
which suffices to retain the required weak-star closure relations.

The two copies of $F$ also determine a positive measure on pairs of
points. We extend this measure through the inverse system to a nonzero
Radon measure $\gamma$ on $K^2$. Its support consists of distinct pairs
with the same projection at the stage under consideration. The convergence
identities imply that every $f\in C(K)$ with $\mu(f)=0$ for all $\mu\in A$
satisfies
\[
\int_{K^2}|f(y)-f(x)|^2\,d\gamma(x,y)=0.
\]
By continuity, $f(x)=f(y)$ for every $(x,y)\in\supp\gamma$.
Thus any pair in this fixed nonempty support gives the required
$\delta_y-\delta_x$ in the weak-star closure of $A$.

Section~\ref{sec:prelim} gives the preliminary facts and proves Theorem~B.
Section~\ref{sec:limits} establishes the prediction and countable-limit
arguments. Section~\ref{sec:recursion} proves the successor lemma and
uses it to construct the inverse system and its measure limits.
Section~\ref{sec:coupling} converts the localised limits into differences
of point masses, which exclude quotients in Section~\ref{sec:finish}.
The remaining assertions of Theorem~A and the obstruction to using only
one-point splits are proved in Sections~\ref{sec:properties}
and~\ref{sec:obstruction}.

\section{Duality and preliminary facts}\label{sec:prelim}

We use $\N=\{0,1,2,\ldots\}$. For a compact space $X$, the norm on
$C(X)$ is $\|f\|_\infty=\sup_{x\in X}|f(x)|$. We identify $M(X)=C(X)'$
with the finite signed Radon measures and write
$\mu(f)=\int_X f\,d\mu$. A finite positive Radon measure $\lambda$ is
regular: for every Borel set $B\subseteq X$,
\[
\lambda(B)=\sup\{\lambda(F):F\subseteq B,\ F\text{ compact}\}
=\inf\{\lambda(V):B\subseteq V,\ V\text{ open}\}.
\]
The first equality is inner regularity and the second is outer regularity.
We write $P(X)$ for the Radon probability measures on $X$.

For $\mu\in M(X)$, its total variation is the positive measure
\[
|\mu|(B)=\sup\left\{\sum_{j=1}^r|\mu(B_j)|:
 B=\bigcup_{j=1}^r B_j,\ B_j\text{ pairwise disjoint Borel sets}\right\}.
\]
The variation norm is $\|\mu\|=|\mu|(X)$, equivalently
$\|\mu\|=\sup_{\|f\|_\infty\leqslant1}|\mu(f)|$.
The \emph{Jordan decomposition} is $\mu=\mu^+-\mu^-$, where
$\mu^+=(|\mu|+\mu)/2$ and $\mu^-=(|\mu|-\mu)/2$ are positive measures concentrated on disjoint Borel sets. In particular, if
$\mu=\sum_{j=1}^r a_j\delta_{x_j}$ with distinct $x_j$ and nonzero $a_j$,
then $\delta_x(f)=f(x)$,
\[
\mu^+=\sum_{a_j>0}a_j\delta_{x_j},\qquad
\mu^-=\sum_{a_j<0}(-a_j)\delta_{x_j},\qquad
\|\mu\|=\sum_{j=1}^r|a_j|.
\]
The space of these finitely supported measures is denoted by $L(X)$.

The \emph{support} of $\mu$ is
$\supp\mu=\{x\in X:|\mu|(V)>0\text{ for every open neighbourhood }V\ni x\}$.
We say that $\mu$ is \emph{concentrated on} a Borel set $E$ if
$|\mu|(X\setminus E)=0$. A positive Radon measure is \emph{nonatomic}
if it has no atoms; equivalently, it gives measure zero to every singleton.
A closed set is \emph{perfect} if it has no isolated points.
For a Borel set $E$, the restriction is the measure
$(\mu\restr{E})(B)=\mu(B\cap E)$. We denote its indicator function by
$\one_E$. For any subset $S\subseteq X$, put
$L(S)=\{\mu\in L(X):\supp\mu\subseteq S\}$; the set $S$ need not be closed.
The weak-star topology is $\sigma(M(X),C(X))$, denoted by $w^*$.
Thus $\mu_n\to\mu$ weak-star means $\mu_n(f)\to\mu(f)$ for every $f\in C(X)$.

If $\pi:X\to Y$ is continuous, its \emph{push-forward} on measures is
\[
(\pi_*\mu)(B)=\mu(\pi^{-1}[B]),\qquad
(\pi_*\mu)(f)=\mu(f\circ\pi)\quad(f\in C(Y)).
\]
It satisfies $\|\pi_*\mu\|\leqslant\|\mu\|$, since
$\|f\circ\pi\|_\infty\leqslant\|f\|_\infty$.
The same set formula defines $s_*\mu$ for a Borel map $s$;
when the spaces are compact metrisable, this is again a Radon measure.
The \emph{fibre} over $y$ is $\pi^{-1}\{y\}$. A \emph{Borel section} of
$\pi$ is a Borel map $s:Y\to X$ such that $\pi\circ s=\mathrm{id}_Y$.
We call $\zeta\in M(X)$ a \emph{norm-preserving lift} of $v\in M(Y)$
if $\pi_*\zeta=v$ and $\|\zeta\|=\|v\|$.

For a linear subspace $A\subseteq M(X)$ and a linear subspace $Z\subseteq C(X)$, define
\[
\begin{split}
A_\perp&=\{f\in C(X):\mu(f)=0\ (\mu\in A)\},\\
Z_M^\perp&=\{\mu\in M(X):\mu(f)=0\ (f\in Z)\},\qquad
Z_L^\perp=Z_M^\perp\cap L(X).
\end{split}
\]
The bipolar theorem for a dual pair gives
\begin{equation}\label{eq:bipolar}
\overline A^{\,w^*}=(A_\perp)_M^\perp,\qquad
\overline A^{\,w^*}\cap L(X)=(A_\perp)_L^\perp\quad(A\subseteq L(X)).
\end{equation}
Here the closure on both left-hand sides is in $M(X)$; intersecting with $L(X)$ gives the relative closure. These are standard duality facts; see \cite[Chapter~8]{Jarchow}. 

The first lemma expresses the metrisable-quotient question in terms of
finitely supported measures.

\begin{lemma}\label{lem:dual}
The space $C_p(X)$ has an infinite-dimensional metrisable Hausdorff quotient if and only if $L(X)$ contains a countably infinite-dimensional relatively weak-star closed linear subspace.
\end{lemma}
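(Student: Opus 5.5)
The plan is to use the duality between $C_p(X)$ and $L(X)$ given by $\langle f,\mu\rangle=\mu(f)$. Under this pairing the dual of $C_p(X)$ is exactly $L(X)$, and the topology induced on $L(X)$ by $C(X)$ is the restriction of the weak-star topology. Recall that a quotient $C_p(X)/Z$ by a closed subspace $Z$ is Hausdorff, and that its dual is $Z_L^\perp$. The quotient topology is the weak topology $\sigma(C(X)/Z,Z_L^\perp)$, because quotients of weak topologies are weak. The key point is that a locally convex space carrying its weak topology is metrisable if and only if its dual has countable Hamel dimension. Granting this, both directions follow quickly.

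For the forward direction, suppose $E=C_p(X)/Z$ is infinite-dimensional and metrisable. Let $(U_n)$ be a countable base of zero-neighbourhoods. Since $E$ carries its weak topology, each $U_n$ contains a set $\{\,|\mu(f)|\le 1,\ \mu\in F_n\}$ with $F_n\subseteq A:=Z_L^\perp$ finite. Every $\mu\in A$ is continuous, so $\{|\mu|\le1\}\supseteq U_n$ for some $n$. Then $\mu$ vanishes on $\bigcap_{\nu\in F_n}\ker\nu$, and so $\mu\in\Span F_n$ by the standard finite-kernel lemma. Hence $A$ has countable Hamel dimension. It is infinite-dimensional, since otherwise $E$, being Hausdorff, would be finite-dimensional: the points of $E$ are separated by $A$, so $\dim E\le\dim A$. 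Finally, $A=Z_L^\perp$ is relatively weak-star closed in $L(X)$ as an intersection of kernels of evaluations at $f\in Z$. This is consistent with \eqref{eq:bipolar}.

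For the converse, take $A\subseteq L(X)$ relatively weak-star closed with a Hamel basis $(\mu_n)_{n\in\N}$. Put $Z=A_\perp$, which is closed in $C_p(X)$. By \eqref{eq:bipolar}, $Z_L^\perp=\overline A^{\,w^*}\cap L(X)=A$, so the dual of $E=C_p(X)/Z$ is $A$. The topology of $E$ is $\sigma(E,A)$, and it is generated by the countably many seminorms $|\mu_n(\cdot)|$. It is therefore metrisable, and it is Hausdorff because $Z$ is closed. The space $E$ is infinite-dimensional because the $\mu_n$ induce linearly independent functionals on $E$. This gives the required metrisable quotient.

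The step needing most care is the identification of the quotient topology on $C_p(X)/Z$ with $\sigma(C_p(X)/Z,Z_L^\perp)$. I would handle it through basic neighbourhoods. The image of $\{f:|\mu_i(f)|<\eps,\ i\le k\}$ with $\mu_i\in L(X)$ contains the set of classes $[f]$ with $|\nu_j(f)|<\eps'$, where the $\nu_j$ are those combinations of the $\mu_i$ lying in $Z_L^\perp$. This reduces to finite-dimensional linear algebra on the image of $C(X)$ in $\R^k$ under $(\mu_i)$: the set $Z$ maps onto a subspace $W$, and the quotient of $\R^k$ by $W$ is controlled by the functionals annihilating $W$. The converse inclusion is immediate, since each $\nu\in Z_L^\perp$ factors continuously through the quotient.
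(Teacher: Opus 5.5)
Your proposal is correct and follows essentially the same route as the paper: you identify the quotient topology with $\sigma(C(X)/Z,Z_L^\perp)$, show that a metrisable weak topology forces the dual $Z_L^\perp$ to be spanned by countably many functionals, and use the bipolar identity $Z_L^\perp=A$ for $Z=A_\perp$ in the converse. The differences are cosmetic: the paper's forward direction passes through an embedding of the quotient as a dense subspace of $\R^\N$ and the adjoint of the quotient map, whereas you read off the countable Hamel dimension of $Z_L^\perp$ directly; you also sketch the quotient theorem for weak topologies that the paper only cites.
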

\begin{proof}
The topology of $C_p(X)$ is $\sigma(C(X),L(X))$; see \cite{Arhangelskii}.
For every closed linear subspace $Z\subseteq C_p(X)$, the quotient topology is
\[
\sigma(C(X)/Z,Z_L^\perp),
\]
by the standard quotient theorem for weak topologies \cite[Chapter~8]{Jarchow}.
Its continuous dual is therefore $Z_L^\perp$.

An infinite-dimensional metrisable quotient $Q$ is linearly homeomorphic
to a dense vector subspace of $\R^\N$. Indeed, a countable neighbourhood
base for its weak topology uses only countably many functionals. These
span $Q'$, since every continuous linear functional depends on finitely
many of the functionals defining a weak topology. Choose a basis
$(\lambda_n)_{n<\omega}$ of $Q'$. The map
$q\mapsto(\lambda_n(q))_{n<\omega}$ is a topological embedding with dense
image: each finite coordinate projection is surjective by linear
independence of the corresponding functionals. Conversely, every dense
vector subspace of $\R^\N$ is infinite dimensional and metrisable.
Thus $C_p(X)$ has such a quotient precisely when it admits a continuous
open linear surjection $T:C_p(X)\to D$ onto such a subspace.

The continuous dual $D'$ is the span of the coordinate evaluations,
which are linearly independent by density. Since $T$ is open and
surjective, its adjoint is injective and
\[
T'(D')=(\ker T)_L^\perp.
\]
This is a countably infinite-dimensional relatively weak-star closed
subspace of $L(X)$.

Conversely, let $A\subseteq L(X)$ have these properties and put
$Z=A_\perp$. Then $Z$ is closed in $C_p(X)$ and \eqref{eq:bipolar} gives
$Z_L^\perp=A$. The quotient topology displayed above is generated by a
countable basis of $A$, so it is metrisable. Its dual is infinite
dimensional, and hence so is the quotient.
\end{proof}

To apply the successor construction to a subspace of $L(X)$, we need
a sequence of disjointly supported signed measures whose positive and
negative parts have a common weak-star limit.

\begin{lemma}\label{lem:flat}
Let $X$ be compact metrisable and let $A$ be an infinite-dimensional linear subspace of $L(X)$. There exist $\sigma_n\in A$ and $\lambda\in P(X)$ such that the supports of the $\sigma_n$ are pairwise disjoint,
\[
\|\sigma_n\|=2,\qquad \sigma_n(\one)=0,\qquad
\sigma_n\longrightarrow0\quad\hbox{weak-star},
\]
and the Jordan parts $p_n=\sigma_n^+$ and $q_n=\sigma_n^-$ are probabilities converging weak-star to $\lambda$.
\end{lemma}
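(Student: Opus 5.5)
The plan is to build disjointly supported elements of $A$ by linear algebra, normalise them, use weak-star compactness of $P(X)$, and then fix the mismatch between the limits of the positive and negative parts by an antisymmetric pairing trick.

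\emph{Step 1: disjointly supported elements of $A$.} We choose $\sigma'_n\in A$ recursively. Suppose $\sigma'_0,\dots,\sigma'_{n-1}$ have been chosen, and let $E_n$ be the finite set $\bigcup_{k<n}\supp\sigma'_k$. Consider
\[
A_n=\{\mu\in A:\ \mu(\one)=0,\ \mu(\{x\})=0\ (x\in E_n)\}.
\]
It is cut out of $A$ by finitely many linear functionals, so it has finite codimension in the infinite-dimensional space $A$. Hence $A_n\neq\{0\}$, and we pick a nonzero $\sigma'_n\in A_n$. Since $\sigma'_n$ is finitely supported and gives zero mass to each point of $E_n$, its support is disjoint from $E_n$. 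Rescaling, we may assume $\|\sigma'_n\|=2$. Because $\sigma'_n(\one)=0$ and $\|\sigma'_n\|=2$, both Jordan parts $p'_n=(\sigma'_n)^+$ and $q'_n=(\sigma'_n)^-$ have mass $1$, so they are probabilities.

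\emph{Step 2: compactness.} Since $X$ is compact metrisable, $P(X)$ is weak-star compact and metrisable. Passing to a subsequence, we may assume $p'_n\to\alpha$ and $q'_n\to\beta$ weak-star for some $\alpha,\beta\in P(X)$. In general $\alpha\neq\beta$; this is the only real obstacle.

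\emph{Step 3: symmetrisation.} Put
\[
\sigma_n=\tfrac12\bigl(\sigma'_{2n}-\sigma'_{2n+1}\bigr)\in A .
\]
The supports of $\sigma'_{2n}$ and $\sigma'_{2n+1}$ are disjoint, so there is no cancellation and the Jordan decomposition splits termwise:
\[
\sigma_n^+=\tfrac12\bigl(p'_{2n}+q'_{2n+1}\bigr),\qquad
\sigma_n^-=\tfrac12\bigl(q'_{2n}+p'_{2n+1}\bigr).
\]
Both are probabilities. Consequently $\|\sigma_n\|=2$ and $\sigma_n(\one)=0$. The supports of distinct $\sigma_n$ are contained in disjoint unions of supports of distinct $\sigma'_k$, so they are pairwise disjoint. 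Finally, both Jordan parts converge weak-star to $\lambda=\tfrac12(\alpha+\beta)\in P(X)$. Their difference $\sigma_n$ therefore converges weak-star to $0$.
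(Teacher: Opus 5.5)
Your proof is correct, but it reaches the common limit by a different route from the paper.

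The paper makes the sequence weak-star null in advance. It fixes a norm-dense sequence $f_0=\one,f_1,f_2,\ldots$ in $C(X)$. At step $n$ it chooses $\sigma_n$ in the finite-codimensional subspace of $A$ that annihilates $f_0,\dots,f_n$ and gives zero mass to the earlier atoms. This yields $\sigma_n(f_j)=0$ for $n\geqslant j$, and density then gives $\sigma_n\to0$ weak-star for the whole sequence. One compactness extraction for the positive parts now forces the negative parts to have the same limit, since their difference is null.

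You impose only $\mu(\one)=0$ and disjointness of supports. You accept that the Jordan limits $\alpha,\beta$ may differ, and you repair this by the pairing $\frac12(\sigma'_{2n}-\sigma'_{2n+1})$, which swaps the roles of the positive and negative parts on alternate blocks. Because the four pieces $p'_{2n},q'_{2n},p'_{2n+1},q'_{2n+1}$ have pairwise disjoint supports, nothing cancels. The Jordan parts are therefore exactly the averages you write down, and both converge to $\frac12(\alpha+\beta)$.

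What each route buys:
\begin{itemize}
\item Your route needs no dense sequence in $C(X)$. Metrisability is used only for sequential compactness of $P(X)$, here applied jointly in $P(X)^2$. It also identifies $\lambda$ explicitly as $\frac12(\alpha+\beta)$.
\item The paper's route gives somewhat more structure. The selected vectors eventually annihilate any prescribed countable family of functions, and the sequence is weak-star null before any subsequence is taken.
\end{itemize}
Only the properties stated in the lemma are used later in the paper, so either construction serves equally well there.
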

\begin{proof}
Choose a norm-dense sequence $f_0=\one,f_1,f_2,\ldots$ in $C(X)$. For $x\in X$, the coefficient map $a_x:L(X)\to\R$, $a_x(\mu)=\mu(\{x\})$, is an algebraic linear functional. At step $n$, intersect $A$ with the kernels of $\mu\mapsto\mu(f_j)$ for $j\leqslant n$, and with $\ker a_x$ for the finitely many points used in the earlier supports. The intersection has finite codimension in $A$, so it remains infinite dimensional. Choose a vector of norm $2$ in it. Its support is disjoint from all preceding supports.

For each $j$, the construction gives $\sigma_n(f_j)=0$ whenever $n\geqslant j$.
Given $f\in C(X)$ and $\eps>0$, choose $j$ with
$\|f-f_j\|_\infty<\eps/2$. For $n\geqslant j$,
\[
|\sigma_n(f)|=|\sigma_n(f-f_j)|
\leqslant\|\sigma_n\|\,\|f-f_j\|_\infty<\eps.
\]
Hence $\sigma_n\to0$ weak-star. Since each measure has total mass zero and norm $2$, both Jordan parts have mass one. The space $P(X)$ is weak-star compact and metrisable, so a subsequence of the positive parts converges to some $\lambda\in P(X)$. The differences tend to zero, hence the negative parts have the same limit on this subsequence. Reindex the subsequence to obtain all the stated properties. The measure $\lambda$ may have atoms; no subsequent argument requires it to be nonatomic.
\end{proof}

The next lemma controls all norm-preserving lifts of a finitely supported
measure. The concentration assertion will ensure uniqueness when the
fibres over its support are singletons.

\begin{lemma}\label{lem:lifts}
Let $\pi:X\to Y$ be a continuous surjection of compact Hausdorff spaces, and let $v\in L(Y)$ have norm $1$. Then
\[
\mathcal B(v,\pi)=\{\zeta\in M(X):\|\zeta\|\leqslant1,\ \pi_*\zeta=v\}
\]
is nonempty and weak-star compact. Every $\zeta\in\mathcal B(v,\pi)$
satisfies
\[
\|\zeta\|=1,\qquad |\zeta|\bigl(X\setminus\pi^{-1}(\supp v)\bigr)=0.
\]
For an arbitrary nonzero $v\in L(Y)$, the same conclusions hold with
$1$ replaced by $\|v\|$ in the definition of $\mathcal B(v,\pi)$ and in
the norm equality.
\end{lemma}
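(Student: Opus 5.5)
The plan has three parts: construct an explicit lift to show $\mathcal B(v,\pi)$ is nonempty, obtain compactness from Banach--Alaoglu, and then derive the two displayed properties from the contraction $\|\pi_*\zeta\|\leqslant\|\zeta\|$ together with the fact that $v$ is concentrated on the finite set $S=\supp v$. It suffices to treat $\|v\|=1$; the general case follows by scaling, since $\zeta\in\mathcal B(v,\pi)$ if and only if $\zeta/\|v\|\in\mathcal B(v/\|v\|,\pi)$.

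\emph{Nonemptiness and compactness.} Write $v=\sum_{j=1}^r a_j\delta_{y_j}$ with distinct $y_j$ and $\sum_j|a_j|=1$. Since $\pi$ is surjective, choose points $x_j\in\pi^{-1}\{y_j\}$ and set $\zeta=\sum_j a_j\delta_{x_j}$. The points $x_j$ are distinct because the $y_j$ are distinct. Hence $\|\zeta\|=1$, and $\pi_*\zeta=v$ follows directly from the definition of push-forward. For compactness, observe that $\mathcal B(v,\pi)$ is the intersection of the weak-star compact unit ball of $M(X)$ (Banach--Alaoglu) with the set $\{\zeta:\zeta(f\circ\pi)=v(f)\text{ for all }f\in C(Y)\}$. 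The second set is weak-star closed, because each map $\zeta\mapsto\zeta(f\circ\pi)$ is weak-star continuous. A closed subset of a compact set is compact.

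\emph{Norm equality.} For any $\zeta\in\mathcal B(v,\pi)$ we have $1=\|v\|=\|\pi_*\zeta\|\leqslant\|\zeta\|\leqslant1$, so $\|\zeta\|=1$.

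\emph{Concentration.} This is the step needing care. Put $E=\pi^{-1}[S]$, which is closed because $S$ is finite and $\pi$ is continuous. Decompose $\zeta=\zeta\restr{E}+\zeta\restr{X\setminus E}$, so that $\|\zeta\|=\|\zeta\restr{E}\|+\|\zeta\restr{X\setminus E}\|$. The push-forward of $\zeta\restr{X\setminus E}$ is concentrated on $Y\setminus S$: for a Borel set $B\subseteq Y$ we have $\pi^{-1}[B]\cap(X\setminus E)=\pi^{-1}[B\setminus S]\cap(X\setminus E)$. The push-forward of $\zeta\restr{E}$ is concentrated on $S$. Because these two pieces live on disjoint sets, their norms add, and we get
\[
1=\|v\|=\|\pi_*(\zeta\restr{E})\|+\|\pi_*(\zeta\restr{X\setminus E})\|.
\]
Since $v$ is concentrated on $S$, restricting to $Y\setminus S$ shows $\pi_*(\zeta\restr{X\setminus E})=v\restr{Y\setminus S}=0$. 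Consequently
\[
1=\|\pi_*(\zeta\restr{E})\|\leqslant\|\zeta\restr{E}\|=1-|\zeta|(X\setminus E),
\]
which forces $|\zeta|(X\setminus E)=0$. The only point to verify along the way is that these restrictions and push-forwards remain Radon measures on compact Hausdorff spaces. This holds because $E$ is closed and push-forwards of Radon measures under continuous maps between compact spaces are Radon.
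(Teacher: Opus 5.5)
Your proof is correct. Nonemptiness, compactness and the norm equality are argued as in the paper; the two routes differ only in the concentration step.

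The paper does not split $\zeta$. It notes that $|\pi_*\zeta|\leqslant\pi_*|\zeta|$ as positive measures on $Y$. Both sides have total mass $\|v\|=\|\zeta\|$, so they are equal. This gives $\pi_*|\zeta|=|v|$, and evaluating on $Y\setminus\supp v$ yields the concentration at once.

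You instead decompose $\zeta$ along the closed set $E=\pi^{-1}[\supp v]$. Mutual singularity of the two push-forwards shows that $\pi_*(\zeta\restr{X\setminus E})=0$. You then apply the contraction $\|\pi_*\mu\|\leqslant\|\mu\|$ only to $\zeta\restr{E}$, which forces $|\zeta|(X\setminus E)=0$.

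Both arguments rest on the same saturation of the norm contraction. Yours uses only the contraction inequality already recorded in the preliminaries, and avoids the (unstated) justification of $|\pi_*\zeta|\leqslant\pi_*|\zeta|$, at the cost of more bookkeeping. The paper's version is shorter and gives the stronger identity $\pi_*|\zeta|=|v|$. That identity says, for example, that $|\zeta|(\pi^{-1}\{y\})=|v|(\{y\})$ for each atom $y$ of $v$; your argument records only concentration on $E$.
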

\begin{proof}
Choose one point above each atom of $v$ to obtain a norm-preserving lift. Compactness follows from Banach--Alaoglu and weak-star continuity of push-forward. For every $\zeta\in\mathcal B(v,\pi)$,
\[
1=\|v\|\leqslant\|\zeta\|\leqslant1,\qquad
|\pi_*\zeta|\leqslant\pi_*|\zeta|.
\]
The positive measures in the second inequality have equal total mass, so they are equal. Since $|v|$ is concentrated on $\supp v$, the equality $\pi_*|\zeta|=|v|$ gives $|\zeta|(X\setminus\pi^{-1}(\supp v))=0$.
\end{proof}

We recall the inverse-system terminology used in the construction; see
\cite[Sections~2.5 and 3.2]{Engelking} and
\cite[Section~20.5]{KKLPS}, together with the references therein.
An \emph{inverse system} $(K_\alpha,\pi_{\alpha,\beta})$, indexed by an
ordinal interval starting at $\alpha_0$, consists of compact spaces and continuous maps $\pi_{\alpha,\beta}:K_\beta\to K_\alpha$
for $\alpha\leqslant\beta$, with $\pi_{\alpha,\alpha}=\mathrm{id}$ and
$\pi_{\alpha,\gamma}=\pi_{\alpha,\beta}\circ\pi_{\beta,\gamma}$ whenever
$\alpha\leqslant\beta\leqslant\gamma$. Its inverse limit below a limit
ordinal $\rho>\alpha_0$ is the compact space
\[
\varprojlim_{\alpha_0\leqslant\alpha<\rho}K_\alpha
=\left\{(x_\alpha)\in\prod_{\alpha_0\leqslant\alpha<\rho}K_\alpha:
 \pi_{\alpha,\beta}(x_\beta)=x_\alpha\ (\alpha_0\leqslant\alpha\leqslant\beta<\rho)\right\}.
\]
The system is \emph{continuous} if, at each limit stage $\rho>\alpha_0$ in its
index set, the natural map from $K_\rho$ to this inverse limit is a
homeomorphism. In our systems the indices start at $\omega$, the maps
are surjective, and $K_\alpha\subseteq2^\alpha$, where
$2^\alpha=\{0,1\}^\alpha$ has the product topology. The maps are coordinate
restrictions, $\pi_{\alpha,\beta}(x)=x\restr{\alpha}$.

A continuous surjection is a \emph{simple extension} if exactly one fibre
has two points and all other fibres are singletons. In
\cite[Definition~6.1 and Theorem~6.10]{KSZ}, K\k{a}kol, Sobota and Zdomskyy
proved that every infinite limit $X$ of a continuous inverse system of
zero-dimensional compact spaces starting with a singleton and using
simple extensions at each successor has the \emph{finitely supported
Josefson--Nissenzweig property}. This means that there is a sequence
$(v_n)\subseteq L(X)$ such that $\|v_n\|=1$ and $v_n\to0$ weak-star;
such a sequence is called a finitely supported Josefson--Nissenzweig
sequence. Section~\ref{sec:obstruction} examines the restriction to
simple extensions and its relation to our construction.

The following factorisation fact allows us to check a continuous function
on the final space at a countable stage.

\begin{lemma}\label{lem:factor}
Let $(K_\alpha,\pi_{\alpha,\beta})_{\omega\leqslant\alpha\leqslant\beta\leqslant\omega_1}$ be a continuous inverse system of compact spaces with surjective bonding maps, where $K_\alpha\subseteq2^\alpha$ and the maps are coordinate projections. Then
\[
C(K_{\omega_1})=\bigcup_{\alpha<\omega_1,\ \alpha\geqslant\omega}
\pi_{\alpha,\omega_1}^*C(K_\alpha),\qquad
\pi_{\alpha,\omega_1}^*g=g\circ\pi_{\alpha,\omega_1}.
\]
\end{lemma}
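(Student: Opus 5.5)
The plan is a Stone--Weierstrass argument combined with a uniform-approximation argument. Write $\pi_\alpha=\pi_{\alpha,\omega_1}$ and
\[
\mathcal A=\bigcup_{\omega\leqslant\alpha<\omega_1}\pi_\alpha^*C(K_\alpha).
\]

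First, $\mathcal A$ is a subalgebra of $C(K_{\omega_1})$ containing the constants. The family is increasing: if $\alpha\leqslant\beta$, then $g\circ\pi_\alpha=(g\circ\pi_{\alpha,\beta})\circ\pi_\beta$. So any two elements of $\mathcal A$ lie in a common $\pi_\beta^*C(K_\beta)$, which is an algebra because $\pi_\beta^*$ is an algebra homomorphism.

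Next, $\mathcal A$ separates points. By continuity of the system at $\omega_1$, $K_{\omega_1}$ is the inverse limit of the $K_\alpha$, i.e.\ a subspace of $2^{\omega_1}$ with the maps being restrictions. Two distinct points $x\neq y$ differ at some coordinate $\xi<\omega_1$. Take $\alpha=\max(\omega,\xi+1)$. Then the coordinate function $z\mapsto z(\xi)$ on $K_\alpha$ is continuous and its pullback separates $x$ and $y$. Stone--Weierstrass now shows that $\mathcal A$ is uniformly dense in $C(K_{\omega_1})$.

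The main step is to show that $\mathcal A$ is uniformly closed; this is where $\omega_1$ having uncountable cofinality is used. Let $f\in C(K_{\omega_1})$ and choose $g_n\in C(K_{\alpha_n})$ with $\|f-g_n\circ\pi_{\alpha_n}\|_\infty<2^{-n}$. Put $\alpha=\sup_n\alpha_n$, which is $<\omega_1$. Each $h_n=g_n\circ\pi_{\alpha_n,\alpha}$ is in $C(K_\alpha)$. Because $\pi_\alpha$ is surjective, $\|h\circ\pi_\alpha\|_\infty=\|h\|_\infty$, so $(h_n)$ is Cauchy in $C(K_\alpha)$ and converges uniformly to some $g\in C(K_\alpha)$. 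Then $f=g\circ\pi_\alpha$. Surjectivity of the bonding maps is exactly what makes $\pi_\alpha^*$ isometric, and that is what makes this step work.

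Alternatively, one can factor $f$ directly. For each rational $\eps$, compactness gives a basic clopen cover on which $f$ varies by less than $\eps$, and each such cover depends on only finitely many coordinates. Collecting countably many coordinates bounded by some $\alpha<\omega_1$ shows that $f$ depends only on the coordinates below $\alpha$, so $f$ factors through $\pi_\alpha$. Continuity of the factor follows because $\pi_\alpha$ is a closed surjection, hence a quotient map.

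I expect no serious obstacle. The only points needing care are using continuity at $\omega_1$ to identify $K_{\omega_1}$ with the inverse limit, and surjectivity of the bonding maps for the isometry.
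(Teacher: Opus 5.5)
Your proof is correct, but your main argument takes a different route from the paper. Your ``alternative'' paragraph is essentially the paper's proof. For each $m$ the paper takes a finite cover of $K_{\omega_1}$ by relative clopen cylinders on which $f$ oscillates by less than $2^{-m}$. It bounds the countably many coordinates involved by some $\alpha<\omega_1$ and deduces that $f$ is constant on the fibres of $\pi_{\alpha,\omega_1}$. Continuity of the factor then comes from the quotient-map property of the surjection.

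Your primary route has two steps:
\begin{enumerate}
\item The union $\mathcal A$ is a point-separating unital subalgebra, hence uniformly dense by Stone--Weierstrass. This is the same density argument the paper uses at countable limits in Lemma~\ref{lem:limit-invariant}.
\item $\mathcal A$ is uniformly closed. Each $\pi_{\alpha,\omega_1}^*$ is isometric by surjectivity, and a convergent sequence in $\mathcal A$ involves only countably many stages, whose supremum is still below $\omega_1$.
\end{enumerate}

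The paper's argument is more elementary and explicit. It reads off the stage $\alpha$ from the coordinates that $f$ actually depends on and avoids Stone--Weierstrass, but it relies on the finite-coordinate cylinder structure of $2^{\omega_1}$. Your argument uses only two facts: the projections $\pi_{\alpha,\omega_1}$ separate points of $K_{\omega_1}$, which continuity at $\omega_1$ guarantees, and $\omega_1$ has uncountable cofinality. It therefore applies unchanged to any continuous inverse system of compact Hausdorff spaces indexed by $\omega_1$ with surjective bonding maps, with no zero-dimensionality or embedding into a Cantor cube needed.

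Here, since $K_{\omega_1}\subseteq2^{\omega_1}$ and the maps are restrictions, point separation is immediate even without invoking continuity at $\omega_1$.
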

\begin{proof}
Fix $f\in C(K_{\omega_1})$. For every positive integer $m$, continuity and compactness give a finite cover by relative clopen cylinders on which the oscillation of $f$ is less than $2^{-m}$. The union of the finite coordinate sets used in these covers is countable, and therefore contained in some $\omega\leqslant\alpha<\omega_1$. If $x,y\in K_{\omega_1}$ satisfy
$\pi_{\alpha,\omega_1}(x)=\pi_{\alpha,\omega_1}(y)$, they have the same
coordinates on every finite set used in the covers. For each $m$, any
cylinder in the $m$th cover containing $x$ therefore also contains $y$.
Thus $|f(x)-f(y)|<2^{-m}$ for every $m$, and $f(x)=f(y)$.
We may consequently define
$g(z)=f(x)$ for any $x\in\pi_{\alpha,\omega_1}^{-1}\{z\}$;
this is well defined and $f=g\circ\pi_{\alpha,\omega_1}$. The surjective projection is a quotient map from a compact space onto a Hausdorff space, so $g$ is continuous. Conversely, every displayed pullback is continuous.
\end{proof}

We now prove Theorem~B. The key point is that measures with uniformly
bounded norms and support sizes form weak-star compact sets.

\begin{proof}[Proof of Theorem B]
By the weak-topology description in Lemma~\ref{lem:dual}, a Hausdorff metrisable quotient embeds topologically into $\R^\N$ by evaluation against a countable spanning family of its dual. It is therefore second countable and separable.

Conversely, let $Q=C_p(X)/Z$ be an infinite-dimensional Hausdorff separable quotient and put $A=Z_L^\perp$. This subspace of $L(X)$ is relatively weak-star closed, and the topology of $Q$ is the weak topology determined by $A$. Choose a countable dense set in $Q$ and representatives $f_n\in C(X)$ of its members. These functions separate the elements of $A$: if $\mu\in A$ annihilates every $f_n$, its induced continuous functional on $Q$ vanishes on a dense set and hence is zero. In particular, the map
\[
J:A\longrightarrow\R^\N,\qquad J(\mu)=(\mu(f_n))_{n<\omega},
\]
is injective.

For positive integers $r,m$, put
\[
\Delta_{r,m}(X)=\{\mu\in L(X):|\supp\mu|\leqslant r,\ \|\mu\|\leqslant m\},
\qquad S_{r,m}=A\cap\Delta_{r,m}(X).
\]
The set $\Delta_{r,m}(X)$ is weak-star compact: it is the image of
\[
X^r\times\{(a_1,\ldots,a_r)\in\R^r:\textstyle\sum_{j=1}^r|a_j|\leqslant m\}
\]
under the continuous map $(x,a)\mapsto\sum_{j=1}^ra_j\delta_{x_j}$.
Since $A$ is relatively closed in $L(X)$ and $\Delta_{r,m}(X)\subseteq L(X)$, the set $S_{r,m}$ is weak-star compact. The restriction of $J$ to $S_{r,m}$ is a continuous injection into the Hausdorff space $\R^\N$, hence a homeomorphism onto its image. Thus every $S_{r,m}$ is weak-star metrisable.

Suppose first that some $S_{r,m}$ is not compact for the variation norm $\|\mu\|=|\mu|(X)$. The identity map from its compact weak-star topology to its norm topology cannot be continuous, since a continuous image of a compact space is compact. At a point of discontinuity, metrisability gives $\mu_n,\mu\in S_{r,m}$ and $\eps>0$ such that
\[
\mu_n\xrightarrow{w^*}\mu,\qquad \|\mu_n-\mu\|\geqslant\eps\quad(n<\omega).
\]
The finitely supported measures $u_n=(\mu_n-\mu)/\|\mu_n-\mu\|$ have norm one and are weak-star null: for every $f\in C(X)$,
$|u_n(f)|\leqslant\eps^{-1}|(\mu_n-\mu)(f)|\to0$.
By \cite[Theorem~1]{BKS2019}, $C_p(X)$ has a quotient isomorphic to $(c_0)_p$, the space $c_0$ with the topology inherited from $\R^\N$. This quotient is infinite dimensional, Hausdorff and metrisable.

Suppose instead that every $S_{r,m}$ is norm compact. Choose a countable norm-dense subset $D_{r,m}\subseteq S_{r,m}$ and let
\[
S=\bigcup_{r,m\geqslant1}\ \bigcup_{\nu\in D_{r,m}}\supp\nu.
\]
The set $S$ is countable. If $\mu\in S_{r,m}$ had a nonzero atom at a point $x\notin S$, then
$\|\mu-\nu\|\geqslant|\mu(\{x\})|>0$ for every $\nu\in D_{r,m}$, contradicting norm density. Hence every $\mu\in A=\bigcup_{r,m\geqslant1}S_{r,m}$ is supported in $S$. It follows that $A\subseteq L(S)$ has countable Hamel dimension. Lemma~\ref{lem:dual}, including its description of the quotient topology, now shows that the original quotient $Q$ is metrisable. This proves the converse implication.
\end{proof}

\section{Prediction and preservation at countable limits}\label{sec:limits}

We prepare two ingredients for the construction of $K$. First, diamond
will predict every countable sequence of finitely supported measures
through its coordinate projections. Secondly, we specify the convergence
requirements to be maintained throughout the inverse system and show
that they extend to each countable limit stage.

\subsection{Prediction of measure sequences}\label{subsec:prediction}

We shall apply diamond to sequences of finitely supported measures on $2^{\omega_1}$.  The union of the supports of one such sequence is countable.  The next observation ensures that sufficiently long coordinate projections preserve all its atoms, coefficients and variation norms.

\begin{lemma}\label{lem:support-projection}
Let $S\subseteq2^{\omega_1}$ be countable, and let
$p_\alpha:2^{\omega_1}\to2^\alpha$ be restriction of coordinates.
There is $\eta<\omega_1$ such that, whenever
$\eta\leqslant\alpha<\omega_1$, the map $p_\alpha$ is injective on $S$.
For every such $\alpha$, the map
\[
 (p_\alpha)_*:L(S)\longrightarrow L(p_\alpha[S]),
 \qquad \sum_{j<r}a_j\delta_{x_j}
       \longmapsto\sum_{j<r}a_j\delta_{p_\alpha(x_j)},
\]
is a linear isometric bijection, where $L(S)$ denotes the finitely
supported measures whose supports are contained in $S$; no closedness
of $S$ is required.
\end{lemma}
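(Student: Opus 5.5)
The plan is to separate points of $S$ pairwise, bound the ordinals that do the separating, and then treat the measure-theoretic half as bookkeeping.

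\textbf{Choosing $\eta$.} For each pair of distinct points $x,y\in S$, the functions $x,y:\omega_1\to\{0,1\}$ differ, so I pick the least coordinate $\xi(x,y)<\omega_1$ with $x(\xi)\neq y(\xi)$. There are only countably many unordered pairs from $S$, because $S$ is countable. Since $\omega_1$ is regular, the supremum of countably many countable ordinals is countable. I therefore set
\[
\eta=\sup\{\xi(x,y)+1:\ x,y\in S,\ x\neq y\}<\omega_1,
\]
with $\eta=0$ if $S$ has at most one point. If $\eta\leqslant\alpha<\omega_1$ and $x\neq y$ lie in $S$, then $\xi(x,y)<\alpha$. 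Hence $p_\alpha(x)=x\restr{\alpha}$ and $p_\alpha(y)=y\restr{\alpha}$ differ at $\xi(x,y)$, and $p_\alpha$ is injective on $S$.

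\textbf{The push-forward on $L(S)$.} Fix such an $\alpha$. By the definition of push-forward, $(p_\alpha)_*\delta_x=\delta_{p_\alpha(x)}$, and the push-forward is linear. This gives the displayed formula, and the image of $L(S)$ lies in $L(p_\alpha[S])$. Now take $\mu=\sum_{j<r}a_j\delta_{x_j}$ written with distinct $x_j\in S$ and nonzero $a_j$. By injectivity the points $p_\alpha(x_j)$ are distinct, so the displayed sum is already in reduced form. It follows that the image has the same coefficients and $\supp((p_\alpha)_*\mu)=p_\alpha[\supp\mu]$.

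\textbf{Isometry and bijectivity.} Using the norm formula $\|\mu\|=\sum|a_j|$ from Section~\ref{sec:prelim}, the two norms are equal, so the map is an isometry. In particular it is injective. For surjectivity, every element of $L(p_\alpha[S])$ has the form $\sum b_j\delta_{z_j}$ with $z_j\in p_\alpha[S]$. I choose the unique preimages $x_j\in S$ of the $z_j$ and pull the coefficients back. Nothing in this argument uses topology on $S$; only the definition of $L(S)$ through supports is involved, so closedness of $S$ plays no role.

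I do not expect a real obstacle. The only point that needs care is the countable cofinality argument for $\eta$, namely that $\omega_1$ is regular.
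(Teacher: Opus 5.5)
Your proof is correct and follows the paper's argument: you separate each pair of distinct points of $S$ by a countable coordinate, bound these countably many coordinates by a countable $\eta$, and note that a reduced representation of $\mu\in L(S)$ pushes forward to a reduced representation with the same coefficients, which gives the isometry and, by pulling atoms back, the inverse. The only cosmetic difference is that the paper also takes $\eta\geqslant\omega$, which suits the later stage indexing but is not needed for the statement.
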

\begin{proof}
For every pair of distinct points $x,y\in S$, choose
$\xi(x,y)<\omega_1$ with $x(\xi(x,y))\neq y(\xi(x,y))$.
The set of these coordinates is countable.  Choose a countable
$\eta\geqslant\omega$ strictly greater than every chosen coordinate.
Then $p_\alpha(x)\neq p_\alpha(y)$ whenever $\alpha\geqslant\eta$.
Consequently, if $\mu=\sum_{j<r}a_j\delta_{x_j}$ is a reduced finite
representation with $x_j\in S$, then its push-forward has the same
nonzero coefficients at distinct points, and hence
\[
 \|(p_\alpha)_*\mu\|=\sum_{j<r}|a_j|=\|\mu\|.
\]
Every finitely supported measure on $p_\alpha[S]$ has a unique such
lift to $S$, proving the last assertion.
\end{proof}

We recall the set-theoretic terminology used below.  A subset of $\omega_1$ is \emph{club} if it is unbounded and is closed under suprema of increasing countable sequences whose supremum is below $\omega_1$.  A subset is \emph{stationary} if it meets every club.  The intersection of a stationary set with a club is stationary, because the intersection of two clubs is a club.  In particular, every stationary set meets every tail $[\eta,\omega_1)$.

Jensen's principle $\diamondsuit_{\omega_1}$ asserts the existence of a sequence $(d_\alpha)_{\alpha<\omega_1}$ with $d_\alpha\subseteq\alpha$ such that
\[
 \{\alpha<\omega_1:d_\alpha=E\cap\alpha\}
\]
is stationary for every $E\subseteq\omega_1$.  Thus diamond guesses the initial segments of each subset of $\omega_1$ at stationarily many stages.

The following lemma predicts the measure sequences directly. Related
forms of measure prediction appear in \cite[Lemma~6.4]{Glodkowski}
and \cite[Lemma~26]{KoszmiderSilber}. We give the elementary argument
for finitely supported measures.

\begin{lemma}\label{lem:diamond}
Assume $\diamondsuit_{\omega_1}$. There are sequences
$g^\alpha=(g_n^\alpha)_{n<\omega}$ in $L(2^\alpha)$, for
$\omega\leqslant\alpha<\omega_1$, such that, for every sequence
$(v_n)_{n<\omega}$ in $L(2^{\omega_1})$, stationarily many $\alpha$
satisfy
\[
 g_n^\alpha=(p_\alpha)_*v_n\qquad(n<\omega),
\]
and $p_\alpha$ is injective on $\bigcup_n\supp v_n$.
\end{lemma}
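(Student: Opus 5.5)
We will encode each sequence $(v_n)_{n<\omega}$ in $L(2^{\omega_1})$ as a subset $E\subseteq\omega_1$ in such a way that, on a club of $\alpha$, the initial segment $E\cap\alpha$ encodes exactly the projected sequence $((p_\alpha)_*v_n)_{n<\omega}$. We then decode $d_\alpha$ to define $g^\alpha$, and intersect the resulting stationary set with the tail supplied by Lemma~\ref{lem:support-projection}.

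For the coding, we first describe a finitely supported measure in terms of a countable list of atoms and rational-free real coefficients. Each $v_n$ is written in reduced form $v_n=\sum_{j<r_n}a_{n,j}\delta_{x_{n,j}}$ with $x_{n,j}\in2^{\omega_1}$ and $a_{n,j}\in\R\setminus\{0\}$. The countably many reals $(r_n)$ and $(a_{n,j})$ are coded into a subset $R\subseteq\omega$. The points are coded coordinatewise: fix a bijection $e:\omega_1\to\omega_1\times\omega\times\omega$, in addition to the coding of $R$ on the even naturals, say. Put $\xi\in E$ (for $\xi\geqslant\omega$) iff $e(\xi)=(\beta,n,j)$ with $j<r_n$ and $x_{n,j}(\beta)=1$.

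The set $C$ of limit ordinals $\alpha\geqslant\omega$ with $e[\alpha]=\alpha\times\omega\times\omega$ is a club, by the standard closure argument. For $\alpha\in C$, the set $E\cap\alpha$ determines $R$ and the restrictions $x_{n,j}\restr{\alpha}=p_\alpha(x_{n,j})$. Define $g^\alpha$ from $d_\alpha$ by the same decoding, using $e\restr{\alpha}$. When $d_\alpha$ decodes a well-formed sequence, set
\[
g^\alpha_n=\sum_{j<r_n}a_{n,j}\delta_{y_{n,j}},
\]
where $y_{n,j}\in2^\alpha$ is read off from $d_\alpha$. Otherwise set $g^\alpha_n=0$. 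Points may coincide after projection, in which case the sum merges atoms. This is harmless, since push-forward is given by the same formula $(p_\alpha)_*v_n=\sum_j a_{n,j}\delta_{p_\alpha(x_{n,j})}$. Hence, whenever $\alpha\in C$ and $d_\alpha=E\cap\alpha$, we obtain $g_n^\alpha=(p_\alpha)_*v_n$ for all $n$.

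Finally, let $S=\bigcup_n\supp v_n$, which is countable. Lemma~\ref{lem:support-projection} gives $\eta$ such that $p_\alpha$ is injective on $S$ for every $\alpha\geqslant\eta$. The set $\{\alpha:d_\alpha=E\cap\alpha\}$ is stationary, and intersecting it with the club $C\cap[\eta,\omega_1)$ keeps it stationary. This yields the conclusion.

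The main obstacle is purely bookkeeping: making the decoding at stage $\alpha$ depend only on $E\cap\alpha$. This is ensured by the club $C$ on which $e$ maps $\alpha$ onto $\alpha\times\omega\times\omega$. The reals are coded in $\omega\subseteq\alpha$, so they are visible at every stage.
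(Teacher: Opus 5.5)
Your proposal is correct and follows the paper's route. You code the lengths and coefficients inside $\omega$ and code the coordinates of the atoms so that, on a club, initial segments of the code record exactly the $\alpha$-restrictions. You then decode $d_\alpha$ and intersect the diamond-stationary set with that club and the tail from Lemma~\ref{lem:support-projection}. The paper does the same, but uses the explicit labels $\omega\cdot(\xi+1)+e_2(n,j)$ and the club $\{\alpha:\omega\cdot\alpha=\alpha\}$ where you use an abstract bijection $e$ and its closure points.

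One small bookkeeping fix is needed. Since you record point bits only at $\xi\geqslant\omega$, take $e$ to be a bijection from $\omega_1\setminus\omega$ (not $\omega_1$) onto $\omega_1\times\omega\times\omega$, with the club condition $e[\alpha\setminus\omega]=\alpha\times\omega\times\omega$. Otherwise the bits indexed by the triples in $e[\omega]$ are never recorded.
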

\begin{proof}
Fix bijections $e_2:\omega^2\to\omega$ and
$e_3:\omega^3\to\omega$, and enumerate $\mathbb Q$ as
$(q_k)_{k<\omega}$. The fixed points
\[
 C=\{\alpha<\omega_1:\alpha>\omega,\ \omega\cdot\alpha=\alpha\}
\]
form a club: unboundedness follows by countably iterating the map
$\xi\mapsto\omega\cdot\xi$ above any prescribed countable ordinal,
and closedness follows from its continuity at limits.

For $\delta\in C\cup\{\omega_1\}$, consider a labelled array
$\mathcal R=(r_n,a_{n,j},x_{n,j})$, where $r_n<\omega$,
$a_{n,j}\in\mathbb R$ and $x_{n,j}\in2^\delta$ for $j<r_n$.
Empty lists and repeated points are allowed. Represent this array by
\[
\begin{split}
 E_\delta(\mathcal R)={}&
 \{e_3(n,r_n,0):n<\omega\}\\
 &{}\cup\{e_3(n,j,k+1):n,k<\omega,\ j<r_n,\ q_k<a_{n,j}\}\\
 &{}\cup\{\omega\cdot(\xi+1)+e_2(n,j):n<\omega,\ j<r_n,
          \ \xi<\delta,\ x_{n,j}(\xi)=1\}.
\end{split}
\]
Here the last line uses ordinal arithmetic. The first two lines lie
in $\omega$ and recover all list lengths and coefficients, using
the rational lower cuts. Every entry in the last line lies outside
$\omega$, and ordinal division by $\omega$ recovers $\xi+1$ and
the label $(n,j)$. Thus the array is uniquely determined by its
representing subset of $\delta$.

Let $(d_\alpha)_{\alpha<\omega_1}$ witness diamond. If $\alpha\in C$
and $d_\alpha=E_\alpha(\mathcal R)$ for such an array, put
$g_n^\alpha=\sum_{j<r_n}a_{n,j}\delta_{x_{n,j}}$; otherwise let
$g_n^\alpha=0$ for every $n$. These are measures, so repeated atoms
are combined and their coefficients may cancel.

Given $(v_n)_n$, choose finite representations
$v_n=\sum_{j<r_n}a_{n,j}\delta_{x_{n,j}}$ and let $\mathcal R$ be
the resulting array on $2^{\omega_1}$. For $\alpha\in C$, an entry
$\omega\cdot(\xi+1)+m$ lies below $\alpha$ exactly when
$\xi<\alpha$. All entries describing lengths and coefficients also
lie below $\alpha$. Consequently
\[
 E_{\omega_1}(\mathcal R)\cap\alpha
   =E_\alpha(\mathcal R\restriction\alpha),
\]
where restriction leaves lengths and coefficients unchanged and
replaces each point by its first $\alpha$ coordinates. Diamond
therefore gives the asserted equality of measures at stationarily
many stages in $C$. Finally, intersect these stages with the tail
given by Lemma~\ref{lem:support-projection} for
$\bigcup_n\supp v_n$.
\end{proof}

For a stage space $K_\alpha\subseteq2^\alpha$, the prediction
$g^\alpha$ is \emph{valid} if every $g_n^\alpha$ belongs to
$L(K_\alpha)$ and
$A_\alpha=\Span\{g_n^\alpha:n<\omega\}$ is infinite dimensional.
Otherwise it is \emph{invalid}. In particular, a zero prediction
is invalid.

\subsection{Preservation at countable limits}\label{subsec:countable-limits}

We specify the convergence requirements for the inverse system to be constructed in Section~\ref{sec:recursion} and prove that they are preserved at countable limits.  For subsets $I,J\subseteq\N$, the notation $J\subseteq^*I$ means that $J\setminus I$ is finite.  Convergence along an infinite $J\subseteq\N$ always means convergence as the index tends to infinity through $J$.

At a valid prediction stage $\theta$, let $(D_{\theta,k})_{k<\omega}$
enumerate the clopen subsets of $K_\theta$. The construction introduces
one sequence of measures for each $D_{\theta,k}$.
We index these sequences by $i=(\theta,k)$ and write $b_i=\theta+1$
for their initial stage. All values of $k$ at the same $\theta$ refer
to one predicted subspace; the corresponding localised limits will be
used together in Section~\ref{sec:coupling}.
The sequence with index $i$ is
\[
 (a_{i,n})_{n<\omega}\subseteq L(K_{b_i})
\]
of norm-one measures with pairwise disjoint finite supports.  At every proper stage $\alpha\geqslant b_i$, we retain an infinite set $J_i^\alpha\subseteq\N$ and a measure $\mu_i^\alpha\in M(K_\alpha)$ with $\|\mu_i^\alpha\|\leqslant1$.  Define
\begin{equation}\label{eq:global-lift-sets}
 B_{i,n}^\alpha=
 \{\zeta\in M(K_\alpha):\|\zeta\|\leqslant1,
 (\pi_{b_i,\alpha})_*\zeta=a_{i,n}\}.
\end{equation}
These sets are nonempty and weak-star compact by Lemma~\ref{lem:lifts}.  The three preservation requirements are
\begin{align}
 J_i^\beta&\subseteq^*J_i^\alpha
 &&(b_i\leqslant\alpha\leqslant\beta<\omega_1),
 \label{eq:J-invariant}\\
 (\pi_{\alpha,\beta})_*\mu_i^\beta&=\mu_i^\alpha
 &&(b_i\leqslant\alpha\leqslant\beta<\omega_1),
 \label{eq:mu-invariant}\\
 \sup_{\zeta\in B_{i,n}^\alpha}
 |\zeta(f)-\mu_i^\alpha(f)|&\longrightarrow0
 &&(n\to\infty\text{ along }J_i^\alpha),\quad f\in C(K_\alpha).
 \label{eq:uniform-invariant}
\end{align}
At the initial stage $b_i$, the map $\pi_{b_i,b_i}$ is the identity, so $B_{i,n}^{b_i}=\{a_{i,n}\}$.  Consequently, if $a_{i,n}\xrightarrow{w^*}\mu_i^{b_i}$, the initial requirements hold with $J_i^{b_i}=\N$.

The next lemma allows the construction to continue at every countable limit ordinal.  It produces one infinite set $J_i^\rho$ almost contained in all the earlier sets $J_i^\alpha$, and a measure $\mu_i^\rho$ satisfying the same projection and convergence identities.

\begin{lemma}\label{lem:limit-invariant}
Let $\omega<\rho<\omega_1$ be a limit ordinal, and suppose that the inverse system and requirements \eqref{eq:J-invariant}--\eqref{eq:uniform-invariant} have been constructed below $\rho$.  With $K_\rho$ the inverse limit, all three requirements extend to $\rho$ for every index $i$ with $b_i<\rho$.
\end{lemma}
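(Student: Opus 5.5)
Fix an index $i$ with $b_i<\rho$. Since $\rho$ is a countable limit, choose a strictly increasing cofinal sequence $b_i\leqslant\alpha_0<\alpha_1<\cdots$ in $\rho$.

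\emph{Definition of $\mu_i^\rho$.} By \eqref{eq:mu-invariant}, the measures $\mu_i^\alpha$ for $b_i\leqslant\alpha<\rho$ are compatible under push-forward. Every continuous function on $K_\rho$ is uniformly approximable by pullbacks from earlier stages; this is the countable-limit analogue of Lemma~\ref{lem:factor}, proved with cylinders, or directly by Stone--Weierstrass. Hence the formula $\mu_i^\rho(g\circ\pi_{\alpha,\rho})=\mu_i^\alpha(g)$ defines a linear functional on a dense subalgebra of $C(K_\rho)$. It has norm at most $1$, so it extends uniquely to $\mu_i^\rho\in M(K_\rho)$ with $\|\mu_i^\rho\|\leqslant1$. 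Then \eqref{eq:mu-invariant} holds at $\rho$ by construction.

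\emph{Definition of $J_i^\rho$.} The sets $J_i^{\alpha_m}$ form a $\subseteq^*$-decreasing sequence of infinite sets by \eqref{eq:J-invariant}. Diagonalise: pick $n_m\in\bigcap_{l\leqslant m}J_i^{\alpha_l}$ with $n_m>n_{m-1}$, which is possible because finite intersections are cofinite in the last set. Let $J_i^\rho=\{n_m:m<\omega\}$. Then $J_i^\rho\subseteq^*J_i^{\alpha_m}$ for every $m$. For an arbitrary $\alpha<\rho$, pick $\alpha_m\geqslant\alpha$ and use transitivity of $\subseteq^*$, so \eqref{eq:J-invariant} holds with $\beta=\rho$.

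\emph{Uniform convergence \eqref{eq:uniform-invariant} at $\rho$.} This is the main step. Take $\zeta\in B_{i,n}^\rho$ and put $\alpha\geqslant b_i$ below $\rho$. The push-forward $(\pi_{\alpha,\rho})_*\zeta$ has norm at most $1$ and projects to $a_{i,n}$, so it lies in $B_{i,n}^\alpha$. Therefore, for $f=g\circ\pi_{\alpha,\rho}$,
\[
\sup_{\zeta\in B_{i,n}^\rho}|\zeta(f)-\mu_i^\rho(f)|\leqslant\sup_{\xi\in B_{i,n}^\alpha}|\xi(g)-\mu_i^\alpha(g)|.
\]
The right side tends to $0$ along $J_i^\alpha$, hence along $J_i^\rho$ because $J_i^\rho\subseteq^*J_i^\alpha$.

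For general $f\in C(K_\rho)$ and $\eps>0$, choose such a pullback $f'$ with $\|f-f'\|_\infty<\eps$. Since all measures involved have norm at most $1$, the supremum for $f$ exceeds that for $f'$ by at most $2\eps$. Thus the limsup along $J_i^\rho$ is at most $2\eps$, and letting $\eps\to0$ gives \eqref{eq:uniform-invariant}.

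The only delicate point is the density of pullbacks in $C(K_\rho)$ at a countable limit. I expect no real obstacle here: it follows from continuity of the system and the cylinder argument of Lemma~\ref{lem:factor}, in which finitely many coordinates below $\rho$ already lie below some $\alpha<\rho$.
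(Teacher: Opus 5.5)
Your proof is correct and follows the paper's argument essentially step by step. The steps match in order: you choose $J_i^\rho$ diagonally along a cofinal sequence, you define $\mu_i^\rho$ on the uniformly dense algebra of pullbacks and extend via the Riesz representation theorem, and you finish with the $2\eps$ approximation using $(\pi_{\alpha,\rho})_*\zeta\in B_{i,n}^\alpha$ for $\zeta\in B_{i,n}^\rho$. One point you leave implicit, which the paper checks explicitly, is that well-definedness of $g\circ\pi_{\alpha,\rho}\mapsto\mu_i^\alpha(g)$ and the bound $|\mu_i^\alpha(g)|\leqslant\|g\circ\pi_{\alpha,\rho}\|_\infty$ both need surjectivity of the bonding maps (pass to a common stage), not only the push-forward compatibility \eqref{eq:mu-invariant}.
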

\begin{proof}
Fix an index $i$ with $b_i<\rho$, and choose an increasing sequence $(\alpha_m)_{m<\omega}$ cofinal in $\rho$, with $\alpha_0\geqslant b_i$.  Each finite intersection $\bigcap_{j\leqslant m}J_i^{\alpha_j}$ is infinite: by \eqref{eq:J-invariant}, it contains all but finitely many members of the infinite set $J_i^{\alpha_m}$.  Inductively choose
\[
 n_m\in\bigcap_{j\leqslant m}J_i^{\alpha_j},
 \qquad n_m>n_{m-1}\quad(m>0),
 \qquad J_i^\rho=\{n_m:m<\omega\}.
\]
For any $\alpha\in[b_i,\rho)$, choose $j$ with $\alpha\leqslant\alpha_j$.  All $n_m$ with $m\geqslant j$ lie in $J_i^{\alpha_j}$, and $J_i^{\alpha_j}\setminus J_i^\alpha$ is finite.  Thus $J_i^\rho\subseteq^*J_i^\alpha$, verifying \eqref{eq:J-invariant}.

Consider the subalgebra
\[
 \mathcal E_i^\rho=
 \bigcup_{b_i\leqslant\alpha<\rho}
 \{g\circ\pi_{\alpha,\rho}:g\in C(K_\alpha)\}
 \subseteq C(K_\rho).
\]
The spaces in this union are increasing with $\alpha$, so the union is a unital algebra.  The projections separate points of the inverse limit; hence this algebra separates points and is uniformly dense by the Stone--Weierstrass theorem.  Set
\[
 \Phi_i(g\circ\pi_{\alpha,\rho})=\mu_i^\alpha(g).
\]
To verify well-definedness, suppose
$g\circ\pi_{\alpha,\rho}=h\circ\pi_{\beta,\rho}$, and choose $\delta\in[\max\{\alpha,\beta\},\rho)$.  The surjectivity of $\pi_{\delta,\rho}$ gives
$g\circ\pi_{\alpha,\delta}=h\circ\pi_{\beta,\delta}$.  Applying $\mu_i^\delta$ and using \eqref{eq:mu-invariant} yields $\mu_i^\alpha(g)=\mu_i^\beta(h)$.  The same passage to a common stage proves linearity.  Surjectivity also gives
\[
 |\Phi_i(g\circ\pi_{\alpha,\rho})|
 \leqslant\|g\|_\infty
 =\|g\circ\pi_{\alpha,\rho}\|_\infty.
\]
Thus $\Phi_i$ extends uniquely to a bounded linear functional of norm at most one on $C(K_\rho)$.  The Riesz representation theorem supplies $\mu_i^\rho\in M(K_\rho)$ with $\|\mu_i^\rho\|\leqslant1$.  For every $\alpha\in[b_i,\rho)$ and $g\in C(K_\alpha)$, its definition gives
\[
 ((\pi_{\alpha,\rho})_*\mu_i^\rho)(g)
 =\mu_i^\rho(g\circ\pi_{\alpha,\rho})
 =\mu_i^\alpha(g).
\]
Equality on continuous functions is equality of Radon measures.  This verifies \eqref{eq:mu-invariant} at $\rho$.

Finally, fix $f\in C(K_\rho)$ and $\eps>0$.  Density supplies $\alpha\in[b_i,\rho)$ and $g\in C(K_\alpha)$ with
$\|f-g\circ\pi_{\alpha,\rho}\|_\infty<\eps$.
For any $\zeta\in B_{i,n}^\rho$, the push-forward
$\eta=(\pi_{\alpha,\rho})_*\zeta$ has norm at most one and satisfies
$(\pi_{b_i,\alpha})_*\eta=a_{i,n}$; hence $\eta\in B_{i,n}^\alpha$.  Therefore
\[
 \sup_{\zeta\in B_{i,n}^\rho}|\zeta(f)-\mu_i^\rho(f)|
 \leqslant 2\eps+
 \sup_{\eta\in B_{i,n}^\alpha}|\eta(g)-\mu_i^\alpha(g)|.
\]
The last term tends to zero along $J_i^\rho$, because it tends to zero along $J_i^\alpha$ and $J_i^\rho\subseteq^*J_i^\alpha$.  Letting $\eps$ decrease to zero proves \eqref{eq:uniform-invariant}.  For each index $i$ with $b_i<\rho$, the construction of $J_i^\rho$ uses only the sets $J_i^\alpha$ for that same index, and the construction of $\mu_i^\rho$ uses only the measures $\mu_i^\alpha$.  None of \eqref{eq:J-invariant}--\eqref{eq:uniform-invariant} imposes a condition relating two different indices.  There are at most countably many such indices, since they are pairs $(\theta,k)$ with $\theta<\rho$ and $k<\omega$.  Applying the preceding argument separately to each of them proves the simultaneous assertion.
\end{proof}

\section{The successor lemma and the inverse system}\label{sec:successor}\label{sec:recursion}

Our aim is to construct, under $\diamondsuit_{\omega_1}$, a continuous
inverse system with $K_\omega=2^\omega$ and $K_\alpha\subseteq2^\alpha$
satisfying \eqref{eq:J-invariant}--\eqref{eq:uniform-invariant}.
The successor lemma below is proved in ZFC. It constructs one extension
that preserves the preceding uniform limits and introduces new
clopen-localised limits.

Diamond is used in Subsection~\ref{subsec:recursion} to obtain the
prediction sequences from Lemma~\ref{lem:diamond}.
At a valid prediction, Lemma~\ref{lem:flat} supplies the signed sequence
to which the successor lemma applies; its extension becomes
$K_{\alpha+1}$ and its limits give \eqref{eq:birth-limit}.
Lemma~\ref{lem:limit-invariant} handles countable limit stages.
The resulting weak-star closure statement is Lemma~\ref{lem:final-closure}.

\subsection{The successor extension}\label{subsec:successor}

\begin{lemma}\label{lem:simultaneous-successor}
Let $K$ be compact, metrisable and zero-dimensional, and let $I$ be a
finite or countably infinite initial segment of $\N$, possibly empty.
For $i\in I$, let $X_i$ be compact Hausdorff, let $r_i:K\to X_i$ be a
continuous surjection, and let $(a_{i,n})_{n<\omega}\subseteq L(X_i)$
have norm one and pairwise disjoint supports. Put
\[
 E_{i,n}=r_i^{-1}(\supp a_{i,n}),\qquad
 B_{i,n}=\{\zeta\in M(K):\|\zeta\|\leqslant1,
                 (r_i)_*\zeta=a_{i,n}\}.
\]
Let $J_i\subseteq\N$ be infinite and suppose that $\mu_i\in M(K)$ satisfies
\begin{equation}\label{eq:old-uniform-hypothesis}
 \sup_{\zeta\in B_{i,n}}|\zeta(f)-\mu_i(f)|\longrightarrow0
 \quad(n\to\infty,\ n\in J_i),\qquad f\in C(K).
\end{equation}
Let $(\sigma_n)_{n<\omega}\subseteq L(K)$ have pairwise disjoint supports
and Jordan decompositions $\sigma_n=p_n-q_n$, where
\[
 p_n,q_n\in P(K),\qquad
 p_n\xrightarrow{w^*}\lambda,\quad q_n\xrightarrow{w^*}\lambda
 \quad\text{for some }\lambda\in P(K).
\]
Thus $\|\sigma_n\|=2$ and $\sigma_n\to0$ weak-star.
Enumerate the clopen subsets of $K$ as $(D_k)_{k<\omega}$, including $K$,
and fix a surjection $\kappa:\N\to\N$ with all fibres infinite.

Then there exist a compact set
$G\subseteq K$ with $\lambda(G)>0$, a set
$F=\supp(\lambda\restr{G})\subseteq\supp\lambda$, and an open set
$U\subseteq K$ such that $\lambda(F)=\lambda(G)>0$,
$U\cap F=\varnothing$ and $\overline U=U\cup F$.
If $\lambda\restr{G}$ is nonatomic, then $F$ is perfect and uncountable;
if $\lambda\restr{G}=c\delta_x$ for some $x\in K$ and $c>0$, then
$F=\{x\}$.
The space
\[
 K'=((K\setminus U)\times\{0\})\cup((U\cup F)\times\{1\})
\]
is compact, metrisable and zero-dimensional. Its projection $\pi:K'\to K$
has the Borel section $s(x)=(x,\one_U(x))$. The following hold.
\begin{enumerate}
\item For each $i\in I$, there is an infinite $J'_i\subseteq J_i$ such that
\begin{equation}\label{eq:successor-old-conclusion}
 \sup_{\substack{\zeta'\in M(K'),\ \|\zeta'\|\leqslant1\\
                  (r_i\circ\pi)_*\zeta'=a_{i,n}}}
 |\zeta'(h)-(s_*\mu_i)(h)|\longrightarrow0
 \quad(n\to\infty,\ n\in J'_i),\qquad h\in C(K').
\end{equation}
Moreover, $\pi_*s_*\mu_i=\mu_i$ and $\|s_*\mu_i\|=\|\mu_i\|$.
\item There are strictly increasing indices $t_m$ such that
$\supp\sigma_{t_m}$ misses $F$. Its unique norm-preserving lift
$\widehat\sigma_{t_m}$ to $K'$ is finitely supported, and
\begin{equation}\label{eq:fresh-localised-limit}
 \frac12\widehat\sigma_{t_m}\xrightarrow{w^*}\nu_k
 \quad(m\to\infty,\ \kappa(m)=k),\qquad
 \nu_k=\frac12\int_{F\cap D_k}
       (\delta_{(x,1)}-\delta_{(x,0)})\,d\lambda(x).
\end{equation}
The measures $\widehat\sigma_{t_m}/2$ have norm one and pairwise disjoint
supports.
\end{enumerate}
\end{lemma}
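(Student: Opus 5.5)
The plan is to build $U$ as a union of small clopen pieces that accumulate only at $F$. The 1-copies in $K'$ then receive exactly the atoms of selected positive parts $p_{t_m}$ lying close to $F\cap D_{\kappa(m)}$. All other atoms, including every atom of $q_{t_m}$, sit on the 0-copy. Since $s(x)=(x,\one_U(x))$, the lift of $\sigma_{t_m}$ is $\widehat\sigma_{t_m}=s_*\sigma_{t_m}$. Its weak-star limit should then be $\lambda$ on the 0-copy, plus $\lambda\restr{F\cap D_k}$ moved to the 1-copy, minus $\lambda$ on the 0-copy. Halving this gives $\nu_k$.

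\emph{Choice of $G$ and $F$.} Let $S=\bigcup_n\supp\sigma_n$, which is countable. If $\lambda$ has an atom $x$, put $G=\{x\}$; since the supports are disjoint, $x$ lies in at most one $\supp\sigma_n$, and I discard that index. Otherwise $\lambda$ is nonatomic, so $\lambda(S)=0$, and inner regularity gives a compact $G\subseteq K\setminus S$ with $\lambda(G)>0$. Then $F=\supp(\lambda\restr G)\subseteq G$ misses $S$, and $F$ is perfect and uncountable. I also want $|\mu_i|(F)=0$ for all $i\in I$, and I expect this to be the main obstacle. First I would decompose $\lambda$ with respect to $\nu=\sum_i2^{-i}|\mu_i|$ and take $G$ inside a $\nu$-null set whenever the singular part of $\lambda$ is nonzero. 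The absolutely continuous case needs a separate argument, sketched in the last paragraph.

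\emph{Construction of $U$ and the $t_m$.} Fix a compatible metric and let $F_\eps$ denote the closed $\eps$-neighbourhood of $F$. I choose $t_0<t_1<\cdots$ and clopen sets $W_m$ recursively. At step $m$, with $k=\kappa(m)$, I take a radius $\eps_m<1/m$ and then $t_m$ so large that three things hold. First, $p_{t_m}$ and $q_{t_m}$ give mass at most $2^{-m}$ to the earlier clopen sets $W_l$, $l<m$. This is possible because I arranged $\lambda(W_l)\leqslant2^{-l-2}$ and the $W_l$ are clopen, so weak-star convergence controls their masses. Second, $p_{t_m}(D_k\cap F_{\eps_m})$ is within $2^{-m}$ of $\lambda(F\cap D_k)$; this uses the portmanteau bounds for open and closed sets together with $\lambda(F_\eps\setminus F)\to0$. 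Third, $\supp\sigma_{t_m}\cap F=\varnothing$. I then let $W_m$ be a clopen neighbourhood of the $p_{t_m}$-atoms in $D_k\cap F_{\eps_m}$ lying outside the earlier $W_l$. It is chosen inside $F_{\eps_m}$, disjoint from $F$, from $\supp q_{t_m}$ and from $\bigcup_{l<m}\supp\sigma_{t_l}$, and with $\lambda(W_m)\leqslant2^{-m-2}$. Put $U=\bigcup_mW_m$. Because $W_m\subseteq F_{1/m}$, the set $U$ accumulates only at $F$, so $\overline U=U\cup F$ and $U\cap F=\varnothing$. Consequently $K'$ is closed in $K\times2$, hence compact, metrisable and zero-dimensional.

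\emph{Verification of (2).} Test against $h\in C(K')$. The functions $x\mapsto h(x,0)$ and $x\mapsto h(x,1)$ are continuous on $K$. Splitting $s_*p_{t_m}$ into its parts on $U$ and off $U$, the mass on the 1-copy is within $O(2^{-m})$ of $p_{t_m}\restr{D_k\cap F_{\eps_m}}$. By uniform continuity of $h(\cdot,1)$, integrating it against this mass converges to $\int_{F\cap D_k}h(x,1)\,d\lambda$. The complementary mass, evaluated with $h(\cdot,0)$, converges to $\int h(x,0)\,d\lambda-\int_{F\cap D_k}h(x,0)\,d\lambda$. Meanwhile $s_*q_{t_m}$ lives on the 0-copy and converges to $\lambda$ there. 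Subtracting and halving gives $\nu_k$. Finally, $s_*\sigma_{t_m}$ has disjoint finite supports and norm $2$, and it is the unique norm-preserving lift because the fibres over its support are singletons (Lemma~\ref{lem:lifts}).

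\emph{Verification of (1).} Since $\pi\circ s=\mathrm{id}$, we have $\pi_*s_*\mu_i=\mu_i$ and $\|s_*\mu_i\|=\|\mu_i\|$. Let $\zeta'$ be any admissible lift and put $\zeta=\pi_*\zeta'\in B_{i,n}$. Only the part of $|\zeta|$ near $F$ can distinguish $\zeta'$ from $s_*\zeta$. Away from $F$, $h\circ s$ agrees with a continuous function, and the hypothesis \eqref{eq:old-uniform-hypothesis} applies to that function. When $|\mu_i|(F)=0$, I would use the uniform hypothesis on continuous functions $\phi$ with $\one_F\leqslant\phi\leqslant\one_{F_\eps}$. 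This shows that $\sup_{\zeta}|\zeta|(F_\eps)$ is eventually small along $J_i$, which gives \eqref{eq:successor-old-conclusion} with $J'_i=J_i$. In the absolutely continuous case I would instead thin $J_i$ to $J'_i$ so that the sets $E_{i,n}$, $n\in J'_i$, are disjoint from the $W_m$ wherever this is needed. If necessary, I would shrink the $W_m$ further in the same recursion so that mass above $F$ is forced onto the 0-copy, consistently with $s_*\mu_i$. This is the step I would scrutinise most carefully.
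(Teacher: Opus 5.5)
\paragraph{Gap in part (1).} Part (1) is the heart of the lemma, and your plan for it does not work.

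First, your $G$ avoids only $\bigcup_n\supp\sigma_n$, not the old sets $E_{i,n}=r_i^{-1}(\supp a_{i,n})$. These sets may be uncountable and carry positive $\lambda$-mass, so $F$ can meet infinitely many of them. The admissible lifts $\zeta'$ in \eqref{eq:successor-old-conclusion} may then put the mass of $\zeta$ lying over $F$ on either copy, and the supremum need not converge along any infinite $J'_i$.

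Second, the target $|\mu_i|(F)=0$ is not attainable in general. For example, $\mu_i$ may have an atom at the atom of $\lambda$ you chose, or $\lambda\ll\sum_i2^{-i}|\mu_i|$.

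Third, that target would not suffice even if you reached it. The hypothesis \eqref{eq:old-uniform-hypothesis} controls $\zeta(\phi)$, not $|\zeta|(\phi)$, so your claim that $\sup_\zeta|\zeta|(F_\eps)$ eventually becomes small is false. Take $r_i=\mathrm{id}$ and $a_{i,n}=\frac12(\delta_{x_n}-\delta_{y_n})$ with $x_n,y_n\to x\in F$, so $\mu_i=0$. If your pieces $W_m$ capture $x_n$ but not $y_n$ for infinitely many $n$, then $(s_*a_{i,n})(h)\to\frac12(h(x,1)-h(x,0))\neq0$. Norm-one sequences with cancelling Jordan parts of this kind are exactly what earlier successor steps create, and nothing in your choice of the $W_m$ prevents such a separation.

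Your fallback of keeping the $E_{i,n}$ disjoint from the $W_m$ also fails. If $\mu_i(W_0)\neq0$, the hypothesis applied to $\one_{W_0}$ forces $E_{i,n}\cap W_0\neq\varnothing$ for large $n\in J_i$. Moreover, if $E_{i,n}$ missed $U$ entirely, you would get the limit $\mu_i(h_0)$ rather than $(s_*\mu_i)(h)=\mu_i(h_0+(h_1-h_0)\one_U)$. Here $h_0,h_1\in C(K)$ extend $h(\cdot,0)$ from $K\setminus U$ and $h(\cdot,1)$ from $U\cup F$.

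\paragraph{Missing ideas.} The first is to use the pairwise disjointness of $(E_{i,n})_n$ to choose $N_i$ with $\lambda(\bigcup_{n\geqslant N_i}E_{i,n})<2^{-i-4}$, and likewise for the supports of the $\sigma_n$. Take $G$ outside all these tails. Then $F\cap E_{i,n}=\varnothing$ for $n\geqslant N_i$, and every admissible lift equals $s_*\zeta$.

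The second is to interleave the choice of old indices with the construction of $U$. Let $(f_j)$ be norm-dense in $C(K)$. At step $m$, once the clopen set $P_m=\bigcup_{\ell<m}U_\ell$ is fixed, do the following for each $i\leqslant m$:
\begin{itemize}
\item choose $n_{i,m}\in J_i$ such that the uniform estimate over $B_{i,n_{i,m}}$ holds for the finitely many functions $f_j$ and $f_j\one_{P_m}$ with $j\leqslant m$;
\item take a clopen $W_{i,m}\supseteq E_{i,n_{i,m}}$ missing $F$;
\item place all later pieces $U_\ell$ inside clopen neighbourhoods of $F$ that avoid $W_{i,m}$.
\end{itemize}
Then $\one_U=\one_{P_m}$ on $E_{i,n_{i,m}}$. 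Hence $\zeta'(h)=\zeta(h_0+(h_1-h_0)\one_{P_m})$ is uniformly close to $\mu_i(h_0+(h_1-h_0)\one_{P_m})$, which tends to $(s_*\mu_i)(h)$ by dominated convergence. This gives (1) with $J'_i=\{n_{i,m}:m\geqslant i\}$, and no condition on $|\mu_i|(F)$ is needed.

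\paragraph{Part (2).} The same device repairs an error in your part (2). You cannot make $p_{t_m}$ and $q_{t_m}$ give mass at most $2^{-m}$ to $\bigcup_{l<m}W_l$. Indeed, $p_n(W_l)\to\lambda(W_l)$, which is a fixed positive number whenever $W_l$ meets $\supp\lambda$. Instead, the atoms of both Jordan parts lying in $P_m$ go to the $1$-copy, and $\sigma_{t_m}(g\one_{P_m})\to0$ once $t_m$ is chosen after $P_m$.

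\paragraph{Remaining claim.} You also still owe the inclusion $F\subseteq\overline U$. It follows from $\lambda(F\cap V)>0$ for every clopen $V$ meeting $F$, together with the steps where $D_{\kappa(m)}=K$.
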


\begin{figure}[H]
\centering
\begin{tikzpicture}[x=0.9cm,y=0.82cm,font=\small]
\draw[rounded corners] (0,0) rectangle (13,5.9);
\node[anchor=north west] at (0.12,5.8) {$K$};
\filldraw[fill=blue!3,draw=blue!65!black,dashed,rounded corners]
 (4.8,0.55) rectangle (12.5,5.2);
\node[anchor=north east] at (12.35,5.1) {$C_m$};
\filldraw[fill=gray!25,draw=black] (8.6,2.55) rectangle (11.7,3.3);
\node at (10.15,2.93) {$F$};
\draw[fill=blue!12,rounded corners] (5.25,3.45) rectangle (7.9,4.75);
\node at (6.57,4.4) {$U_m\supseteq T_m$};
\node at (5.9,3.86) {$+$};\node at (7.15,3.86) {$+$};
\draw[fill=orange!12,rounded corners] (5.25,0.95) rectangle (7.9,2.25);
\node at (6.57,1.92) {$L_m$};
\node at (5.9,1.35) {$-$};\node at (7.15,1.35) {$+$};
\draw[fill=green!8,rounded corners] (0.6,0.95) rectangle (3.7,2.4);
\node at (2.15,2.05) {$W_{i,m}$};
\node at (2.15,1.4) {$E_{i,n_{i,m}}$};
\draw[fill=blue!12,rounded corners] (0.6,3.45) rectangle (3.7,4.75);
\node at (2.15,4.4) {$P_m$};
\node at (1.5,3.86) {$+$};\node at (2.8,3.86) {$-$};
\node[align=center] at (10.15,1.38)
 {later $C_\ell$ avoid\\$U_m,L_m,W_{i,m}$};
\end{tikzpicture}
\caption{One step of the construction in the proof of Lemma~\ref{lem:simultaneous-successor}. The signs indicate points of $\supp p_{t_m}$ and $\supp q_{t_m}$. The points of $\supp\sigma_{t_m}$ in $P_m\cup T_m$ have second coordinate $1$ in $K'$; all remaining points of this support lie in $L_m$ and have second coordinate $0$. The set $W_{i,m}$ contains $E_{i,n_{i,m}}$ and is disjoint from every later $C_\ell$. Every point of $F$ has two preimages. The figure is schematic: $L_m$ may extend outside $C_m$, and $P_m$ may meet $W_{i,m}$.}
\label{fig:successor}
\end{figure}

\begin{proof}
By Lemma~\ref{lem:lifts}, every $B_{i,n}$ is nonempty and weak-star
compact, and its members have norm one and are concentrated on
$E_{i,n}$. For fixed $i$, the sets $E_{i,n}$ are pairwise disjoint and compact. Choosing any member of $B_{i,n}$ for $n\in J_i$ and using
\eqref{eq:old-uniform-hypothesis} gives
$|\mu_i(f)|\leqslant\|f\|$ for every $f\in C(K)$; hence
$\|\mu_i\|\leqslant1$.

Add the family $E_{-1,n}=\supp\sigma_n$. For each
$i\in I\cup\{-1\}$, disjointness of the sets $E_{i,n}$ and finiteness
of $\lambda$ imply that their tail unions have measures tending to zero.
Choose $N_i$ with
\[
 \lambda\left(\bigcup_{n\geqslant N_i}E_{i,n}\right)<2^{-i-4}.
\]
Since $\sum_{i=-1}^{\infty}2^{-i-4}=1/4$, the union of all these tail
unions has measure less than $1/4$. By inner regularity, its complement
contains a compact set $G$ of positive measure. Take
$F=\supp(\lambda\restr{G})$. The measure $\lambda\restr{G}$ is
concentrated on its support, so $F\subseteq G$ is compact and
$\lambda\restr{F}=\lambda\restr{G}$. In particular,
\begin{equation}\label{eq:F-properties}
 \lambda(F)>0,\qquad F=\supp(\lambda\restr{F}),\qquad
 F\cap E_{i,n}=\varnothing
 \quad(i\in I\cup\{-1\},\ n\geqslant N_i).
\end{equation}
The inclusion $F\subseteq\supp\lambda$ follows because
$\lambda\restr{G}\leqslant\lambda$. Suppose that $\lambda\restr{G}$
is nonatomic. If $x$ were isolated in $F$, some open set $V$ would satisfy
$V\cap F=\{x\}$. The support condition and concentration on $F$ would
then give
\[
 0<(\lambda\restr{G})(V)=(\lambda\restr{G})(\{x\}),
\]
contrary to nonatomicity. Thus $F$ is perfect. If it were countable,
its closed singletons would express it as a countable union of nowhere
dense sets, contradicting the Baire category theorem. Hence $F$ is
uncountable. If $\lambda\restr{G}=c\delta_x$ for $c>0$, its support
is $\{x\}$. The construction also applies when the restriction has
several atoms or both atomic and nonatomic parts.

Fix a compatible metric $d$ on $K$ and a norm-dense sequence
$(f_j)_{j<\omega}$ in $C(K)$ containing every clopen indicator.
Such a sequence exists because a compact metrisable zero-dimensional
space has only countably many clopen sets: each is a finite union of
members of a fixed countable clopen base.
We recursively choose clopen sets $U_m,L_m,C_m$, clopen neighbourhoods
$W_{i,m}$ for $i\in I$, $i\leqslant m$, and indices $t_m,n_{i,m}$.
Write $P_m=\bigcup_{\ell<m}U_\ell$; thus $P_0=\varnothing$.
All the sets $U_m,L_m,W_{i,m}$ will miss $F$, whereas $C_m$ will be a
clopen neighbourhood of $F$.

At step $m$, for each $i\in I$ with $i\leqslant m$, choose
$n_{i,m}\in J_i$, with $n_{i,m}\geqslant N_i$, larger than the indices
previously chosen for this $i$, and satisfying
\begin{equation}\label{eq:old-rank-estimates}
 \sup_{\zeta\in B_{i,n_{i,m}}}|\zeta(g)-\mu_i(g)|<2^{-m}
 \quad\bigl(g\in\{f_j,f_j\one_{P_m}:j\leqslant m\}\bigr).
\end{equation}
Indeed, $P_m$ has already been chosen and is clopen, so all the
functions $f_j$ and $f_j\one_{P_m}$ with $j\leqslant m$ are fixed
members of $C(K)$. For each such function $g$,
\eqref{eq:old-uniform-hypothesis} gives an integer $N(g)$ such that
\[
 \sup_{\zeta\in B_{i,n}}|\zeta(g)-\mu_i(g)|<2^{-m}
 \qquad(n\in J_i,\ n\geqslant N(g)).
\]
There are finitely many $g$. An element of the infinite set $J_i$
larger than all their thresholds, at least $N_i$, and larger than the
previously selected indices therefore satisfies
\eqref{eq:old-rank-estimates}.
The compact set $E_{i,n_{i,m}}$ misses $F$ by
\eqref{eq:F-properties}. Zero-dimensionality and compactness give a
clopen neighbourhood $W_{i,m}$ of $E_{i,n_{i,m}}$ disjoint from $F$.

Let $Q_m$ be the union of all previously chosen $U_\ell,L_\ell$
($\ell<m$) and all $W_{i,\ell}$ chosen up to and including step $m$.
This is a finite union of clopen sets disjoint from $F$.
Since $\lambda$ is Radon,
$\lambda(F)=\inf\{\lambda(O):F\subseteq O,\ O\text{ open in }K\}$.
We may therefore choose an open neighbourhood $O_m$ of $F$ with
$\lambda(O_m\setminus F)<2^{-m}$. Choose a clopen neighbourhood $C_m$
of $F$ contained in
\[
 O_m\cap(K\setminus Q_m)\cap\{x:d(x,F)<2^{-m}\},
\]
and also contained in $C_{m-1}$ if $m>0$. For $m=0$ there is no
additional condition involving a previous $C_m$. Each displayed open
set contains the compact set $F$, so the clopen neighbourhood exists
by a finite subcover from a clopen base. We have
\begin{equation}\label{eq:C-rank-estimates}
 C_m\subseteq\{x:d(x,F)<2^{-m}\},\qquad
 \lambda(C_m\setminus F)<2^{-m}.
\end{equation}

Set $D=D_{\kappa(m)}$. Choose $t_m\geqslant N_{-1}$, larger than all
earlier $t_\ell$, so that, for $j\leqslant m$,
\begin{align}
 |\sigma_{t_m}(f_j)|&<2^{-m},\label{eq:fresh-rank1}\\
 |\sigma_{t_m}(f_j\one_{P_m})|&<2^{-m},\label{eq:fresh-rank2}\\
 |p_{t_m}(f_j\one_{C_m\cap D})
       -\lambda(f_j\one_{C_m\cap D})|&<2^{-m}.
       \label{eq:fresh-rank3}
\end{align}
To justify this common choice of $t_m$, all the sets $P_m$, $C_m$ and
$D$ have been fixed and are clopen. For each $j\leqslant m$, weak-star
convergence gives
\[
 \sigma_n(f_j)\longrightarrow0,\qquad
 \sigma_n(f_j\one_{P_m})\longrightarrow0,\qquad
 p_n(f_j\one_{C_m\cap D})
   \longrightarrow\lambda(f_j\one_{C_m\cap D}).
\]
Each of these finitely many convergences gives a threshold beyond which
the corresponding error is less than $2^{-m}$. Taking $t_m$ larger
than all these thresholds, all earlier $t_\ell$, and at least $N_{-1}$
gives \eqref{eq:fresh-rank1}--\eqref{eq:fresh-rank3}.

Put $T_m=\supp p_{t_m}\cap C_m\cap D$. This finite set misses $F$.
Choose a clopen $U_m\subseteq C_m\cap D$ containing $T_m$, disjoint from
$F$, and meeting $\supp\sigma_{t_m}$ in exactly $T_m$.
For each point of $T_m$, a clopen neighbourhood inside $C_m\cap D$
can be chosen to avoid the compact set
$F\cup(\supp\sigma_{t_m}\setminus T_m)$; the union of these finitely
many neighbourhoods works. Take $U_m=\varnothing$ when $T_m$ is empty.
Since $C_m$ misses every earlier $U_\ell$, the sets $U_m$ are pairwise
disjoint. The finite set
\[
 \supp\sigma_{t_m}\setminus(P_m\cup T_m)
\]
is disjoint from the compact set $F\cup P_m\cup U_m$. Choose a clopen
neighbourhood $L_m$ of that finite set avoiding this compact set.
This finishes step $m$. The construction ensures that all later
$C_\ell$ and $U_\ell$ miss $L_m$ and every $W_{i,m}$.

Let $U=\bigcup_mU_m$, an open set disjoint from $F$. We prove
$\overline U\setminus U=F$. If $y\in\overline U\setminus U$, choose
$y_j\in U$ with $y_j\to y$ and write $y_j\in U_{m_j}$.
Each finite union of the clopen sets $U_m$ is closed and excludes $y$;
thus $m_j\to\infty$. Equation~\eqref{eq:C-rank-estimates} gives
$d(y_j,F)<2^{-m_j}\to0$, so $y\in F$.
Conversely, let $x\in F$ and let $V$ be a neighbourhood of $x$.
Choose a clopen neighbourhood $V_0$ with $x\in V_0\subseteq V$.
The support condition in \eqref{eq:F-properties} gives
$\lambda(F\cap V_0)>0$. The function $\one_{V_0}$ is one of the $f_j$.
Infinitely many steps have $D_{\kappa(m)}=K$, and at all sufficiently
large such steps \eqref{eq:fresh-rank3} yields
\[
 p_{t_m}(C_m\cap V_0)
 >\lambda(C_m\cap V_0)-2^{-m}
 \geqslant\lambda(F\cap V_0)-2^{-m}>0.
\]
Consequently $T_m\cap V_0\neq\varnothing$, and $U_m$ meets $V$.
This proves $F\subseteq\overline U$ and the asserted identity.

Put
\[
 K'_0=(K\setminus U)\times\{0\},\qquad
 K'_1=(U\cup F)\times\{1\}.
\]
Both sets are closed in $K\times\{0,1\}$, because $K\setminus U$
and $U\cup F=\overline U$ are closed. Thus $K'=K'_0\cup K'_1$ is
compact, metrisable and zero-dimensional. The sets $K'_0$ and $K'_1$
are also clopen in $K'$, since
$K'_e=K'\cap(K\times\{e\})$ for $e=0,1$. The function
\[
 \chi_1:K'\longrightarrow\R,\qquad
 \chi_1(x,e)=e=\one_{K'_1}(x,e),
\]
is therefore continuous. The projection $\pi(x,e)=x$ is a continuous
surjection, with
\[
 \pi^{-1}\{x\}=
 \begin{cases}
 \{(x,1)\},&x\in U,\\
 \{(x,0),(x,1)\},&x\in F,\\
 \{(x,0)\},&x\in K\setminus(U\cup F).
 \end{cases}
\]
As $U$ is Borel, $s(x)=(x,\one_U(x))$ is a Borel map into $K'$
and $\pi\circ s=\mathrm{id}_K$.
For a signed Radon measure $\mu$ on $K$, define
$(s_*\mu)(B)=\mu(s^{-1}(B))$ for every Borel set $B\subseteq K'$.
This finite Borel measure is Radon because $K'$ is compact and
metrisable. For $h\in C(K')$ it satisfies
$(s_*\mu)(h)=\int_K h(s(x))\,d\mu(x)$, and
\[
 \pi_*s_*\mu=\mu,\qquad
 \|\mu\|=\|\pi_*s_*\mu\|\leqslant\|s_*\mu\|
          \leqslant\|\mu\|.
\]
The first equality follows from $\pi\circ s=\mathrm{id}_K$.
For either map $T=\pi$ or $T=s$ and every continuous function $h$
on its target, the integral formula gives
\[
 |(T_*\eta)(h)|=\left|\int h\circ T\,d\eta\right|
 \leqslant\|h\|\,\|\eta\|.
\]
Taking the supremum over $\|h\|\leqslant1$ gives the two norm
inequalities.

We determine the intersections of $U$ with the supports of the measures.
Fix $i\in I$ and $m\geqslant i$. Since $E_{i,n_{i,m}}\subseteq W_{i,m}$
and $C_\ell\cap W_{i,m}=\varnothing$ for every $\ell\geqslant m$,
none of the sets $U_\ell\subseteq C_\ell$ with $\ell\geqslant m$
meets $E_{i,n_{i,m}}$. As $P_m=\bigcup_{\ell<m}U_\ell$, we obtain
\begin{equation}\label{eq:old-selector-identity}
 \one_U\restr{E_{i,n_{i,m}}}
 =\one_{P_m}\restr{E_{i,n_{i,m}}}.
\end{equation}
Next put $S_m=\supp\sigma_{t_m}$. By construction,
$U_m\cap S_m=T_m$ and $S_m\setminus(P_m\cup T_m)\subseteq L_m$.
Since $T_m\subseteq U_m$, this gives
$S_m\subseteq P_m\cup U_m\cup L_m$.
For every $\ell>m$, the choice of $C_\ell$ ensures that
$U_\ell\subseteq C_\ell$ is disjoint from $P_m\cup U_m\cup L_m$,
and hence from $S_m$. Thus only the sets $U_\ell$ with $\ell\leqslant m$
contribute to $U\cap S_m$, and
\[
 U\cap S_m=(P_m\cap S_m)\cup T_m.
\]
The two sets on the right are disjoint, because
$T_m\subseteq C_m$ and $C_m\cap P_m=\varnothing$.
The support $S_m$ misses $F$, so each $x\in S_m$ has the unique
preimage $s(x)$. Writing $\sigma_{t_m}=\sum_{x\in S_m}c_{m,x}\delta_x$,
we obtain the norm-preserving lift
\[
 \widehat\sigma_{t_m}=\sum_{x\in S_m}c_{m,x}\delta_{s(x)},
 \qquad \|\widehat\sigma_{t_m}\|=
 \sum_{x\in S_m}|c_{m,x}|=2.
\]
By Lemma~\ref{lem:lifts}, applied after division by two, every
norm-preserving lift is concentrated on $\pi^{-1}(S_m)$.
Since each of these fibres is a singleton, the push-forward requirement
fixes its coefficient at $s(x)$ to be $c_{m,x}$. This proves uniqueness.
Since $T_m\subseteq\supp p_{t_m}$, the disjointness of the Jordan
supports gives, for every $g\in C(K)$,
\begin{equation}\label{eq:fresh-selector-identity}
 \widehat\sigma_{t_m}((g\circ\pi)\chi_1)
 =\sigma_{t_m}(g\one_{P_m})
       +p_{t_m}(g\one_{C_m\cap D_{\kappa(m)}}).
\end{equation}
Indeed, $\sigma_{t_m}\restr{T_m}=p_{t_m}\restr{T_m}$ and
$p_{t_m}(g\one_{T_m})=p_{t_m}(g\one_{C_m\cap D_{\kappa(m)}})$.
The measures $\widehat\sigma_{t_m}$ have pairwise disjoint supports,
because their supports project onto the pairwise disjoint sets $S_m$.

Fix $i\in I$ and put $J'_i=\{n_{i,m}:m\geqslant i\}$.
The map $m\mapsto n_{i,m}$ is strictly increasing, so convergence
as $m\to\infty$ is the same as convergence as $n\to\infty$ along $J'_i$.
Given $h\in C(K')$, the function $x\mapsto h(x,0)$ is continuous
on the closed set $K\setminus U$, and $x\mapsto h(x,1)$ is continuous
on the closed set $U\cup F$. By the Tietze extension theorem, choose
$h_0,h_1\in C(K)$ such that
\[
 h_0(x)=h(x,0)\quad(x\in K\setminus U),\qquad
 h_1(x)=h(x,1)\quad(x\in U\cup F).
\]
Let $\zeta'$ be any measure occurring in the supremum in
\eqref{eq:successor-old-conclusion} with $n=n_{i,m}$, and set
$\zeta=\pi_*\zeta'$. Then $\zeta\in B_{i,n_{i,m}}$.
Write $E=E_{i,n_{i,m}}$. Lemma~\ref{lem:lifts} for
$r_i\circ\pi$ gives
\[
 |\zeta'|(K'\setminus\pi^{-1}(E))=0,
 \qquad |\zeta|(K\setminus E)=0.
\]
Since $E\cap F=\varnothing$, the map
$\pi:\pi^{-1}(E)\to E$ is a continuous bijection between compact
Hausdorff spaces, hence a homeomorphism, whose inverse is $s\restr{E}$.
For a Borel set $B\subseteq K'$, this implies
\[
 \begin{split}
 \zeta'(B)
 &=\zeta'(B\cap\pi^{-1}(E))\\
 &=\zeta'\bigl(\pi^{-1}(s^{-1}(B))\cap\pi^{-1}(E)\bigr)
 =\zeta(s^{-1}(B)).
 \end{split}
\]
Thus $\zeta'=s_*\zeta$. Moreover, \eqref{eq:old-selector-identity}
says that $s(x)=(x,\one_{P_m}(x))$ for $x\in E$, so
\[
 \zeta'(h)=\zeta\bigl(h_0+(h_1-h_0)\one_{P_m}\bigr).
\]
Choose fixed indices $j_0,j_1$ with $\|h_e-f_{j_e}\|<\eps$ for $e=0,1$.
The function $h_0+(h_1-h_0)\one_{P_m}$ differs by less than $\eps$
from $f_{j_0}+(f_{j_1}-f_{j_0})\one_{P_m}$: the difference is
$h_0-f_{j_0}$ off $P_m$ and $h_1-f_{j_1}$ on $P_m$.
For $m\geqslant\max(i,j_0,j_1)$, the three estimates in
\eqref{eq:old-rank-estimates} and the norm bounds thus give
\[
 \left|\zeta'(h)-
 \mu_i\bigl(h_0+(h_1-h_0)\one_{P_m}\bigr)\right|
 \leqslant2\eps+3\cdot2^{-m},
\]
uniformly over all these $\zeta'$.
Since $P_m$ increases to $U$, dominated convergence with respect to
$|\mu_i|$ gives
\[
 \mu_i\bigl(h_0+(h_1-h_0)\one_{P_m}\bigr)
 \longrightarrow\mu_i\bigl(h_0+(h_1-h_0)\one_U\bigr)
 =(s_*\mu_i)(h).
\]
Letting first $m\to\infty$ and then $\eps\to0$ proves part (1).

For part (2), fix $k$ and restrict to $m\in\kappa^{-1}\{k\}$.
For each fixed $j$ and all $m\geqslant j$ in this set,
\eqref{eq:fresh-rank1} gives
$|\widehat\sigma_{t_m}(f_j\circ\pi)|<2^{-m}$.
Equations \eqref{eq:C-rank-estimates},
\eqref{eq:fresh-rank2}--\eqref{eq:fresh-rank3} and
\eqref{eq:fresh-selector-identity} give the further estimate
\[
 \left|\widehat\sigma_{t_m}((f_j\circ\pi)\chi_1)
       -\int_{F\cap D_k}f_j\,d\lambda\right|
 \leqslant(2+\|f_j\|)2^{-m}.
\]
For $g\in C(K)$ with $\|g-f_j\|<\eps$, the left side with $g$
in place of $f_j$ is at most
$3\eps+(2+\|f_j\|)2^{-m}$: the lifted measure has norm two,
and $g\mapsto\int_{F\cap D_k}g\,d\lambda$ is a continuous linear
functional of norm at most one. Similarly,
$|\widehat\sigma_{t_m}(g\circ\pi)|\leqslant2\eps+2^{-m}$.
These estimates extend both limits to every $g\in C(K)$.
For an arbitrary $h\in C(K')$, choose the extensions $h_0,h_1$
as in the proof of part (1). Then
\[
 h=h_0\circ\pi+((h_1-h_0)\circ\pi)\chi_1.
\]
The two established limits yield
\[
 \widehat\sigma_{t_m}(h)\longrightarrow
 \int_{F\cap D_k}\bigl(h(x,1)-h(x,0)\bigr)\,d\lambda(x).
\]
Division by two proves \eqref{eq:fresh-localised-limit} and completes
the proof.
\end{proof}

\subsection{The inverse system and its final measures}\label{subsec:recursion}

Assume $\diamondsuit_{\omega_1}$ and fix the sequences $g^\alpha$
supplied by Lemma~\ref{lem:diamond}. Fix also one map
$\kappa:\N\to\N$ with every fibre infinite, and enumerate each fibre as
\[
 \kappa^{-1}\{k\}=\{m_k(0)<m_k(1)<\cdots\}\qquad(k<\omega).
\]
We use this same map at every successor stage.
We construct a continuous inverse system
\[
 (K_\alpha,\pi_{\alpha,\beta})_{
 \omega\leqslant\alpha\leqslant\beta\leqslant\omega_1},
 \qquad K_\alpha\subseteq2^\alpha,
\]
whose bonding maps are coordinate projections.  Start with $K_\omega=2^\omega$.  Write $\mathcal I_\alpha$ for the set of indices $i$ of measure sequences introduced at or before stage $\alpha$.  Initially $\mathcal I_\omega=\varnothing$: no sequences $(a_{i,n})_n$, index sets $J_i^\omega$ or measures $\mu_i^\omega$ have yet been chosen, so the requirements \eqref{eq:J-invariant}--\eqref{eq:uniform-invariant} are vacuous.  At each limit stage take the inverse limit.  Each successor map described below is onto.  At a limit, compactness and the finite intersection property show that every point of every earlier space extends to a point of the inverse limit.  Thus all bonding maps are surjective.

Inductively every $K_\alpha$ is compact and zero-dimensional: this holds at the start, is preserved by the successor lemma and the constant-coordinate extension, and is preserved by inverse limits. Every proper stage is metrisable, being a subspace of $2^\alpha$ for countable $\alpha$.

Suppose the system and all the measure data have been constructed through a proper stage $\alpha$.  Each earlier successor introduced at most countably many sequences, and $\alpha$ is countable, so $\mathcal I_\alpha$ is countable.  We distinguish the two cases according to the validity of $g^\alpha$, as defined in Subsection~\ref{subsec:prediction}.

If the prediction is invalid, put $K_{\alpha+1}=K_\alpha\times\{0\}\subseteq2^{\alpha+1}$, with constant new coordinate, and let $s_\alpha(x)=(x,0)$.  For each $i\in\mathcal I_\alpha$, set
\[
 J_i^{\alpha+1}=J_i^\alpha,
 \qquad \mu_i^{\alpha+1}=(s_\alpha)_*\mu_i^\alpha.
\]
The map $\pi_{\alpha,\alpha+1}$ is a homeomorphism with inverse $s_\alpha$, so its push-forward identifies the corresponding lift sets and all three preservation requirements hold.  No new sequences are introduced, and $\mathcal I_{\alpha+1}=\mathcal I_\alpha$.

For a valid prediction, apply Lemma~\ref{lem:flat} to $A_\alpha$, obtaining disjointly supported measures
\[
 \sigma_n=p_n-q_n\in A_\alpha,
 \qquad p_n,q_n\xrightarrow{w^*}\lambda_\alpha,
 \qquad\|\sigma_n\|=2.
\]
Enumerate the clopen subsets of $K_\alpha$ without repetitions as $(D_{\alpha,k})_{k<\omega}$.  This is possible because $K_\alpha$ is compact metrisable and zero-dimensional; its clopen algebra is countable and infinite, since $K_\alpha$ maps onto $2^\omega$.  Apply Lemma~\ref{lem:simultaneous-successor} with this enumeration and the fixed map $\kappa$.  For each $i\in\mathcal I_\alpha$, the required data are
\[
 r_i=\pi_{b_i,\alpha},\quad
 (a_{i,n})_{n<\omega},\quad J_i^\alpha,\quad\mu_i^\alpha.
\]
The compact sets used by that lemma are
\[
 E_{i,n}^\alpha=
 \pi_{b_i,\alpha}^{-1}(\supp a_{i,n});
\]
for each fixed $i$ they are pairwise disjoint.  If $\mathcal I_\alpha$ is finite or empty, the lemma is applied to precisely this finite or empty family of data.

For this application enumerate $\mathcal I_\alpha$ by an initial segment of $\N$, as in the hypotheses of Lemma~\ref{lem:simultaneous-successor}, and then use the original indices $i\in\mathcal I_\alpha$ for the resulting data.

Let $F_\alpha,U_\alpha$ be the sets furnished by the lemma.  Identify the space $K'\subseteq K_\alpha\times\{0,1\}$ from that lemma with $K_{\alpha+1}\subseteq2^{\alpha+1}$, using its second coordinate as coordinate $\alpha$, and put
\[
 s_\alpha(x)=(x,\one_{U_\alpha}(x)).
\]
For each $i\in\mathcal I_\alpha$, take $J_i^{\alpha+1}$ to be the infinite subset of $J_i^\alpha$ supplied by the lemma, and set $\mu_i^{\alpha+1}=(s_\alpha)_*\mu_i^\alpha$.  We check the requirements separately.  First,
$J_i^{\alpha+1}\subseteq J_i^\alpha$, which together with the earlier almost inclusions proves \eqref{eq:J-invariant}.  Secondly,
$\pi_{\alpha,\alpha+1}\circ s_\alpha=\mathrm{id}_{K_\alpha}$, so
$(\pi_{\alpha,\alpha+1})_*\mu_i^{\alpha+1}=\mu_i^\alpha$; the earlier coherence identities then give \eqref{eq:mu-invariant} at every earlier stage.  Thirdly, part~(1) of Lemma~\ref{lem:simultaneous-successor} is exactly \eqref{eq:uniform-invariant} at $\alpha+1$.  The norm bound is retained because Borel push-forward does not increase total variation.

The successor lemma supplies distinct indices $t_m$ and their unique coefficient-preserving lifts $\widehat\sigma_{t_m}\in L(K_{\alpha+1})$.  For the new index $i=(\alpha,k)$, put $b_i=\alpha+1$ and introduce the sequence
\begin{equation}\label{eq:birth-sequence}
 a_{(\alpha,k),n}=
 \frac12\widehat\sigma_{t_{m_k(n)}},
 \qquad n<\omega.
\end{equation}
This is a sequence indexed by $\N$, of norm-one measures with pairwise disjoint finite supports.  Distinct values of $k$ use disjoint sets of indices $m$, so no selected measure occurs in two of the newly introduced sequences.  Set $J_i^{b_i}=\N$ and
\begin{equation}\label{eq:birth-limit}
 \mu_{(\alpha,k)}^{\alpha+1}
 =\frac12\int_{F_\alpha\cap D_{\alpha,k}}
  (\delta_{(x,1)}-\delta_{(x,0)})\,d\lambda_\alpha(x).
\end{equation}
Part~(2) of the successor lemma gives convergence of \eqref{eq:birth-sequence} to \eqref{eq:birth-limit}.  Since $B_{i,n}^{b_i}=\{a_{i,n}\}$, this verifies \eqref{eq:uniform-invariant}.  The other two requirements are identities at the initial stage $b_i$.  A zero initial limit is allowed and requires no special case.  Set $\mathcal I_{\alpha+1}=\mathcal I_\alpha\cup(\{\alpha\}\times\N)$.

At a countable limit $\rho$, put $\mathcal I_\rho=\bigcup_{\omega\leqslant\alpha<\rho}\mathcal I_\alpha$ and apply Lemma~\ref{lem:limit-invariant} to each $i\in\mathcal I_\rho$.  This completes the construction through all proper stages.  Put $K=K_{\omega_1}$; it is a compact zero-dimensional space mapping onto $2^\omega$, and hence infinite, with weight at most $\aleph_1$.

For each index $i$ introduced in the construction, the coherent measures determine a final measure $\mu_i\in M(K)$ directly from the exact factorisation in Lemma~\ref{lem:factor}.  If
$f=g\circ\pi_{\alpha,\omega_1}$ with $\alpha\geqslant b_i$, define
\begin{equation}\label{eq:final-record-measure}
 \mu_i(f)=\mu_i^\alpha(g).
\end{equation}
If two such representations are given, pass to a common proper stage and use surjectivity and \eqref{eq:mu-invariant}, exactly as in the well-definedness argument of Lemma~\ref{lem:limit-invariant}.  The values coincide.  Passing to a common stage also proves linearity, and
$|\mu_i(f)|\leqslant\|f\|_\infty$ follows from surjectivity.  Thus \eqref{eq:final-record-measure} defines a bounded functional on all of $C(K)$, and the Riesz representation theorem identifies it with a signed Radon measure of norm at most one.  Its projection to every $K_\alpha$, $\alpha\geqslant b_i$, is $\mu_i^\alpha$.  There is no choice of a set $J_i^{\omega_1}$ in the construction.

The uniform convergence requirement was imposed for all norm-preserving lifts so that it applies to any measures on the final space having the prescribed initial projections.  The next lemma gives the resulting weak-star closure statement.

\begin{lemma}\label{lem:final-closure}
Let $i$ be an index introduced in the construction and let $A\subseteq L(K)$ be a linear subspace.  If $A$ contains norm-one measures $(w_n)_{n<\omega}$ with
\[
 (\pi_{b_i,\omega_1})_*w_n=a_{i,n}\qquad(n<\omega),
\]
then $\mu_i\in\overline A^{\,w^*}$, where the closure is taken in $M(K)$.
\end{lemma}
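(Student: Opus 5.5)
The plan is to show that $\mu_i$ is a weak-star limit of the sequence $(w_n)$ along a suitable infinite set of indices; since every $w_n$ lies in $A$, this places $\mu_i$ in $\overline A^{\,w^*}$. Weak-star convergence is tested on each $f\in C(K)$ separately. For a fixed $f$, Lemma~\ref{lem:factor} gives a countable stage $\alpha\geqslant b_i$ and $g\in C(K_\alpha)$ with $f=g\circ\pi_{\alpha,\omega_1}$. We may enlarge $\alpha$ freely, since factorisation persists at later stages.

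Put $\eta_n=(\pi_{\alpha,\omega_1})_*w_n$. Push-forward does not increase norm, so $\|\eta_n\|\leqslant1$. Composing projections gives
\[
 (\pi_{b_i,\alpha})_*\eta_n=(\pi_{b_i,\omega_1})_*w_n=a_{i,n},
\]
and therefore $\eta_n\in B_{i,n}^\alpha$ as in \eqref{eq:global-lift-sets}. By construction the projection of $\mu_i$ to $K_\alpha$ is $\mu_i^\alpha$. Hence
\[
 |w_n(f)-\mu_i(f)|=|\eta_n(g)-\mu_i^\alpha(g)|
 \leqslant\sup_{\zeta\in B_{i,n}^\alpha}|\zeta(g)-\mu_i^\alpha(g)|,
\]
which tends to zero as $n\to\infty$ along $J_i^\alpha$ by \eqref{eq:uniform-invariant}. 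The uniformity over all lifts is what makes the bound independent of which measures $w_n$ were given.

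The main obstacle is that the index sets $J_i^\alpha$ shrink as $\alpha$ grows, and there is no $J_i^{\omega_1}$, so no single subsequence of $(w_n)$ is known to converge for every $f$. I would avoid this by working with basic weak-star neighbourhoods instead of a fixed subsequence. Given finitely many $f_1,\ldots,f_r\in C(K)$ and $\eps>0$, choose one countable $\alpha$ through which all the $f_j$ factor. Then for all sufficiently large $n\in J_i^\alpha$ we get $|w_n(f_j)-\mu_i(f_j)|<\eps$ for every $j\leqslant r$. Since $J_i^\alpha$ is infinite, such an $n$ exists, so every basic weak-star neighbourhood of $\mu_i$ meets $A$. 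This proves $\mu_i\in\overline A^{\,w^*}$ without constructing a global convergent subsequence.
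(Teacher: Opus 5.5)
Your proposal is correct and follows the paper's own proof: you factor finitely many test functions through one common countable stage $\beta\geqslant b_i$ via Lemma~\ref{lem:factor}, observe that $(\pi_{\beta,\omega_1})_*w_n\in B_{i,n}^\beta$, and invoke \eqref{eq:uniform-invariant} along the infinite set $J_i^\beta$. Like the paper, you conclude that every basic weak-star neighbourhood of $\mu_i$ meets $A$, which avoids needing a set $J_i^{\omega_1}$.
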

\begin{proof}
Let $f_1,\ldots,f_r\in C(K)$ and $\eps>0$ specify a basic weak-star neighbourhood of $\mu_i$.  Lemma~\ref{lem:factor} gives one proper stage $\beta\geqslant b_i$ and functions $g_j\in C(K_\beta)$ with
$f_j=g_j\circ\pi_{\beta,\omega_1}$ for $1\leqslant j\leqslant r$.  The measure
$(\pi_{\beta,\omega_1})_*w_n$ belongs to $B_{i,n}^\beta$ for every $n$.  By \eqref{eq:uniform-invariant}, all sufficiently large $n$ in the infinite set $J_i^\beta$ satisfy, simultaneously for $1\leqslant j\leqslant r$,
\[
 |w_n(f_j)-\mu_i(f_j)|
 =|((\pi_{\beta,\omega_1})_*w_n)(g_j)-\mu_i^\beta(g_j)|
 <\eps.
\]
Thus every such neighbourhood meets $A$.
\end{proof}

\section{From localised limits to differences of point masses}\label{sec:coupling}

For one valid prediction, we represent all its clopen-localised limits
using a single positive Radon measure $\gamma$ on $K^2$, supported on
distinct pairs with the same projection to the predicted stage.
We prove that, if every localised limit belongs to the weak-star closure
of a subspace $A\subseteq L(K)$, then every
$(y_0,y_1)\in\supp\gamma$ satisfies
$\delta_{y_1}-\delta_{y_0}\in\overline A^{\,w^*}$.
Since $\gamma\neq0$, this gives the nonzero finitely supported measure
needed to contradict relative closedness of $A$ in the next section.

Fix a valid prediction at a stage $\theta$, and put $b=\theta+1$, $F=F_\theta$, $\lambda=\lambda_\theta$, and $\lambda_F=\lambda\restr{F}$.  The measure $\lambda_F$ is finite, positive and nonzero.  For a clopen $D\subseteq K_\theta$, let $k$ be its unique index in $(D_{\theta,k})_{k<\omega}$ and use the explicit abbreviations
\[
 \mu_D^\beta=\mu_{(\theta,k)}^\beta
 \quad(b\leqslant\beta<\omega_1),
 \qquad \mu_D=\mu_{(\theta,k)}.
\]
Thus $\mu_D^\beta$ and $\mu_D$ are the limits associated with the sequence indexed by $(\theta,k)$.

For $e\in\{0,1\}$ and $b\leqslant\beta<\omega_1$, we construct Borel maps
\[
 t_e^\beta:F\longrightarrow K_\beta
\]
with coherent projections.  At stage $b$ set $t_e^b(x)=(x,e)$; this is continuous.  At a successor set
$t_e^{\beta+1}=s_\beta\circ t_e^\beta$, which is Borel because both maps are Borel.  Since
$\pi_{\beta,\beta+1}\circ s_\beta=\mathrm{id}$, this definition retains coherence with $t_e^\beta$ and hence with all earlier maps.  At a countable limit $\rho$, define $t_e^\rho(x)$ as the unique point of the inverse limit with the prescribed coherent earlier projections.  Its coordinate at each $\xi<\rho$ is the corresponding coordinate of some $t_e^\beta$ with $b\leqslant\beta<\rho$ and $\xi<\beta$, and is therefore Borel.  Since $\rho$ is countable, $2^\rho$ has a countable base of finite-coordinate cylinders.  Their inverse images are Borel, so $t_e^\rho$ is Borel as a map into $K_\rho$.  Coherence holds by the inverse-limit definition.  The induction also gives
\begin{equation}\label{eq:section-coherence}
 \pi_{\beta,\delta}\circ t_e^\delta=t_e^\beta
 \quad(b\leqslant\beta\leqslant\delta<\omega_1),
 \qquad \pi_{\theta,\beta}(t_e^\beta(x))=x.
\end{equation}

At every proper stage the localisation formula is the following identity of Radon measures:
\begin{equation}\label{eq:localisation-proper}
 \mu_D^\beta=
 \frac12\bigl((t_1^\beta)_*-(t_0^\beta)_*\bigr)
 (\lambda\restr{F\cap D}).
\end{equation}
All push-forwards on the right are finite Borel measures on the compact metrisable space $K_\beta$ and are therefore Radon.  At stage $b$ the identity is \eqref{eq:birth-limit}.  If it holds at $\beta$, push both sides forward by the same Borel map $s_\beta$.  The composition rule for Borel push-forwards and the definition of $t_e^{\beta+1}$ give the identity at $\beta+1$.  This is legitimate for bounded Borel functions: equality of Radon measures entails equality of their integrals against such functions, even though $f\circ s_\beta$ need not be continuous.

At a countable limit $\rho$, let the right-hand side of \eqref{eq:localisation-proper} define $\nu_D^\rho$.  For every $\beta\in[b,\rho)$, coherence \eqref{eq:section-coherence} gives
$(\pi_{\beta,\rho})_*\nu_D^\rho=\mu_D^\beta$.
The same holds for $\mu_D^\rho$ by \eqref{eq:mu-invariant}.  Hence these two measures agree on every continuous function pulled back from a stage below $\rho$.  These functions are uniformly dense by the Stone--Weierstrass argument in Lemma~\ref{lem:limit-invariant}; both measures are bounded, so they agree on all of $C(K_\rho)$.  This proves \eqref{eq:localisation-proper} at every proper stage.

Define the positive Radon measures
\[
 \gamma^\beta=(t_0^\beta,t_1^\beta)_*\lambda_F
 \quad\text{on }K_\beta^2,
 \qquad b\leqslant\beta<\omega_1.
\]
Their mass is $\lambda(F)$, and \eqref{eq:section-coherence} makes them coherent under the pair projections
$Q_{\beta,\delta}=\pi_{\beta,\delta}\times\pi_{\beta,\delta}$.
To obtain their final extension, let $Q_\beta=\pi_{\beta,\omega_1}\times\pi_{\beta,\omega_1}$ and consider
\[
 \mathcal E=
 \bigcup_{b\leqslant\beta<\omega_1}
 \{h\circ Q_\beta:h\in C(K_\beta^2)\}
 \subseteq C(K^2).
\]
This is a unital algebra.  The pair projections separate points of $K^2$, so the Stone--Weierstrass theorem makes $\mathcal E$ uniformly dense.  Set
\[
 \Phi(h\circ Q_\beta)=\gamma^\beta(h).
\]
If two representations define the same function, pass to a common proper stage.  Surjectivity of the pair projection then gives equality of the two functions at that stage, and coherence of the $\gamma^\beta$ gives equality of their integrals.  Thus $\Phi$ is well defined and linear.  Surjectivity also gives
\[
 |\Phi(h\circ Q_\beta)|
 \leqslant\lambda(F)\|h\circ Q_\beta\|_\infty,
 \qquad \Phi(\one)=\lambda(F).
\]
If $h\circ Q_\beta\geqslant0$, then $h\geqslant0$, so $\Phi$ is positive.  It therefore extends to a positive bounded functional on $C(K^2)$; positivity follows, for example, by approximating a nonnegative function uniformly by members of $\mathcal E$ and adding their approximation errors times $\one$.  Since $K^2$ is compact Hausdorff, the Riesz representation theorem yields a positive Radon measure $\gamma$ with
\begin{equation}\label{eq:coupling-marginals}
 (Q_\beta)_*\gamma=\gamma^\beta,
 \qquad \gamma(K^2)=\lambda(F)>0.
\end{equation}

The set
\[
 R^b=\{((x,0),(x,1)):x\in F\}\subseteq K_b^2
\]
is compact and carries $\gamma^b$.  Its closed preimage is
\begin{equation}\label{eq:initial-relation}
 \begin{split}
 R=Q_b^{-1}(R^b)
 =\{(y_0,y_1)\in K^2:
 &\pi_{\theta,\omega_1}(y_0)=\pi_{\theta,\omega_1}(y_1)\in F,\\
 &(y_0)_\theta=0,\ (y_1)_\theta=1\}.
 \end{split}
\end{equation}
By \eqref{eq:coupling-marginals},
$\gamma(K^2\setminus R)=\gamma^b(K_b^2\setminus R^b)=0$.
As $R$ is closed, $\supp\gamma\subseteq R$, and $R$ is disjoint from the diagonal.  Write $p_0,p_1:K^2\to K$ for the two coordinate projections and let $r:R\to F$ be their common projection to $K_\theta$.

For a clopen $D\subseteq K_\theta$, the weight
\[
 \chi_D=\one_D\circ\pi_{\theta,\omega_1}\circ p_0
\]
is continuous on $K^2$ and agrees with $\one_{r^{-1}(D)}$ on $R$.  The final localisation identity is
\begin{equation}\label{eq:localisation-final}
 \mu_D=
 \frac12\bigl((p_1)_*-(p_0)_*\bigr)(\chi_D\gamma).
\end{equation}
Indeed, given $f\in C(K)$, choose $\beta\geqslant b$ and $\bar f\in C(K_\beta)$ with $f=\bar f\circ\pi_{\beta,\omega_1}$.  The continuous function
\[
 (y_0,y_1)\longmapsto
 \chi_D(y_0,y_1)\bigl(f(y_1)-f(y_0)\bigr)
\]
factors through $Q_\beta$.  Its integral against $\gamma$ therefore equals the integral of its factor against $\gamma^\beta$.  By the definition of $\gamma^\beta$, one half of this integral is
\[
 \frac12\int_{F\cap D}
  \bigl(\bar f(t_1^\beta(x))-\bar f(t_0^\beta(x))\bigr)
 \,d\lambda(x)
 =\mu_D^\beta(\bar f)=\mu_D(f).
\]
This proves \eqref{eq:localisation-final} on continuous functions and hence as an identity of Radon measures.

The following lemma converts the closure condition for the localised measures into a pointwise equality on $\supp\gamma$.

\begin{lemma}\label{lem:coupling}
Let $A\subseteq L(K)$ be a linear subspace such that
$\mu_D\in\overline A^{\,w^*}$ for every clopen $D\subseteq K_\theta$, with closure in $M(K)$.  Then
\begin{equation}\label{eq:common-support-equalities}
 \supp\gamma\subseteq
 \bigcap_{f\in A_\perp}
 \{(y_0,y_1)\in K^2:f(y_0)=f(y_1)\}.
\end{equation}
Consequently every $(y_0,y_1)\in\supp\gamma$ satisfies
$\delta_{y_1}-\delta_{y_0}\in\overline A^{\,w^*}\cap L(K)$.
\end{lemma}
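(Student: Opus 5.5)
Fix $f\in A_\perp$. The plan is to show that
\[
 \int_{K^2}|f(y_1)-f(y_0)|^2\,d\gamma(y_0,y_1)=0,
\]
as announced in the introduction. Since $\mu_D\in\overline A^{\,w^*}$ and every element of the weak-star closure annihilates $A_\perp$ by \eqref{eq:bipolar}, we get $\mu_D(f)=0$ for every clopen $D\subseteq K_\theta$.

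\emph{Step 1: from clopen weights to arbitrary weights.} By \eqref{eq:localisation-final}, the condition $\mu_D(f)=0$ reads
\[
 \int\chi_D\,(f(y_1)-f(y_0))\,d\gamma=0 .
\]
Linear combinations of clopen indicators are uniformly dense in $C(K_\theta)$, since $K_\theta$ is compact metrisable and zero-dimensional. Hence, by bounded convergence, for every $u\in C(K_\theta)$ we have
\[
 \int u(\pi_{\theta,\omega_1}(y_0))\,(f(y_1)-f(y_0))\,d\gamma=0 .
\]
Write $\Delta f(y_0,y_1)=f(y_1)-f(y_0)$. Then the signed measure $\Delta f\cdot\gamma$ pushed to $K_\theta$ via $r\circ p_0$ (equivalently via $p_1$, as both projections agree on $R\supseteq\supp\gamma$) vanishes.

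\emph{Step 2: the same identity with $f$ replaced by functions of $f$.} This is the step I expect to be the main obstacle. Expanding $|\Delta f|^2=f(y_1)^2+f(y_0)^2-2f(y_0)f(y_1)$, Step 1 alone does not obviously kill the integral, since the weight now depends on $f$ and not just on the stage-$\theta$ projection. The plan is to exploit that $A_\perp$ is a closed subspace of $C(K)$ that need not be an algebra, while the conclusion required is only for the single function $f$. So I would instead try to approximate $f(y_0)$ on $R$ by functions of the form $u(\pi_{\theta,\omega_1}(y_0))$. This fails in general, because $f$ is not pulled back from $K_\theta$.

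Alternative first attempt: use the full lift structure. Apply the successor construction to $A$ itself via Lemma~\ref{lem:final-closure}, with localisation at all later clopen sets. This, however, is not available from the stated hypotheses. I would therefore use that $\supp\gamma$ consists of pairs $(t_0(x),t_1(x))$ in the limit sense. Concretely, I would approximate $f$ by $\bar f\circ\pi_{\beta,\omega_1}$ (Lemma~\ref{lem:factor} gives exact factorisation), so that
\[
 \int\phi(y_0)\,\Delta f\,d\gamma=\int_F\phi(t_0^\beta x)\bigl(\bar f(t_1^\beta x)-\bar f(t_0^\beta x)\bigr)\,d\lambda .
\]
I would then try to show that functions of $t_0^\beta$ are $\lambda_F$-a.e.\ limits of functions of $x$. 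Measurably, $t_0^\beta$ is a Borel function of $x\in F$, so any bounded Borel function of $t_0^\beta(x)$ is a bounded Borel function $u$ of $x$. By Lusin's theorem, $u$ is an $L^1(\lambda_F)$-limit of continuous functions on $K_\theta$, and hence of clopen step functions. Step 1 thus extends, by dominated convergence, to all bounded Borel weights $u(x)$, and in particular to $u=f(t_0^\beta x)$ and $u=f(t_1^\beta x)$. (Here I would use $\gamma=(t_0^\beta,t_1^\beta)_*\lambda_F$ at stage $\beta$ through \eqref{eq:coupling-marginals}.) Taking $u=\bar f(t_1^\beta x)-\bar f(t_0^\beta x)$ then yields
\[
 \int|\Delta f|^2\,d\gamma=\int_F|\bar f(t_1^\beta x)-\bar f(t_0^\beta x)|^2\,d\lambda=0 .
\]

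\emph{Step 3: conclusion.} Since $\Delta f$ is continuous and $\gamma\ge0$, the vanishing of $\int|\Delta f|^2\,d\gamma$ forces $\Delta f=0$ on $\supp\gamma$. This proves \eqref{eq:common-support-equalities}. For $(y_0,y_1)\in\supp\gamma$, the measure $\delta_{y_1}-\delta_{y_0}\in L(K)$ therefore annihilates $A_\perp$, so by \eqref{eq:bipolar} it lies in $(A_\perp)_L^\perp=\overline A^{\,w^*}\cap L(K)$.
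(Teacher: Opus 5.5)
Your proposal is correct and is essentially the paper's argument: both rewrite $\mu_D(f)=0$ as $\int_F \one_D\,h\,d\lambda=0$ with $h=\bar f\circ t_1^\beta-\bar f\circ t_0^\beta$ (using $\pi_{\theta,\beta}\circ t_0^\beta=\mathrm{id}_F$). Both then pass from clopen weights to all bounded Borel weights and conclude $\int_{K^2}|f(y_1)-f(y_0)|^2\,d\gamma=\int_F h^2\,d\lambda=0$, followed by the same support and bipolar steps. The only difference is how that passage is made: the paper gets $h\lambda_F=0$ from uniqueness of measures on the generating clopen algebra, whereas you use $L^1(\lambda_F)$-density of continuous functions and test against $u=h$; the discarded detours in your Step 2 can simply be deleted.
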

\begin{proof}
Fix $f\in A_\perp$ and write $f=\bar f\circ\pi_{\beta,\omega_1}$ for a proper $\beta\geqslant b$.  Every measure in $\overline A^{\,w^*}$ annihilates $f$, since evaluation at $f$ is weak-star continuous.  Equations \eqref{eq:localisation-proper} and \eqref{eq:final-record-measure} therefore yield
\[
 \int_{F\cap D}h\,d\lambda=0
 \quad(D\subseteq K_\theta\text{ clopen}),
 \qquad h=\bar f\circ t_1^\beta-\bar f\circ t_0^\beta.
\]
The function $h$ is bounded and Borel on $F$.  The traces on $F$ of the clopen subsets of $K_\theta$ form an algebra generating the Borel sets of $F$, because $K_\theta$ is zero-dimensional and has a countable clopen base.  The finite signed measure $h\lambda_F$ vanishes on that algebra, including $F$ itself; uniqueness of finite measures on a generating algebra, or the monotone class theorem, gives $h\lambda_F=0$.  It follows that $h=0$ almost everywhere for $\lambda_F$.

By \eqref{eq:coupling-marginals} and the definition of $\gamma^\beta$,
\[
 \int_{K^2}|f(y_1)-f(y_0)|^2\,d\gamma(y_0,y_1)
 =\int_F|h|^2\,d\lambda=0.
\]
If this continuous nonnegative integrand were positive at a point of $\supp\gamma$, it would be bounded below by a positive constant on some neighbourhood of that point; the neighbourhood has positive $\gamma$-measure, contradicting the zero integral.  It therefore vanishes throughout $\supp\gamma$.  This proves containment in the closed equality set for the fixed $f$.  Since the same fixed support is contained in that set for every $f\in A_\perp$, \eqref{eq:common-support-equalities} follows.

For each pair in this support, its difference of point masses annihilates $A_\perp$.  The annihilator identity \eqref{eq:bipolar}, applied in $(M(K),C(K))$, puts it in the closure of $A$ in $M(K)$.  It is finitely supported, so it also lies in $L(K)$, as asserted.
\end{proof}

\section{Proof of Theorem A}\label{sec:finish}

We first prove the quotient assertions of Theorem~A.

\begin{proof}[Proof of the quotient assertions]
Let $K$ be the compactum constructed above.  By Theorem~B and Lemma~\ref{lem:dual}, the quotient assertions follow once every countably infinite-dimensional subspace of $L(K)$ is shown not to be relatively weak-star closed.  Each such subspace will be treated at one correctly predicted stage of the construction.

Fix
\[
 A=\Span\{v_n:n<\omega\}\subseteq L(K),
 \qquad\dim A=\aleph_0,
 \qquad S=\bigcup_{n<\omega}\supp v_n.
\]
Here $L(S)$ denotes the measures in $L(K)$ whose finite supports are contained in $S$; the countable set $S$ need not be closed.  In particular, $A\subseteq L(S)$.  Let us regard the $v_n$ as measures on $2^{\omega_1}$. Lemma~\ref{lem:diamond} gives a correct prediction stage $\theta$ at which $\pi_{\theta,\omega_1}$ is injective on $S$.

Distinct atoms of any measure in $L(S)$ remain distinct under this projection.  Thus push-forward is injective and variation-isometric on $L(S)$.  In particular, the predicted span at stage $\theta$ is
\[
 A_\theta=
 \Span\{(\pi_{\theta,\omega_1})_*v_n:n<\omega\}
 =(\pi_{\theta,\omega_1})_*A,
\]
and it is infinite dimensional.  Its measures are supported in $K_\theta$, so this is a valid prediction.  The restriction of push-forward to $A$ is consequently a linear isometric bijection from $A$ onto $A_\theta$.

Let $(\sigma_n)$ be the sequence chosen from $A_\theta$ at this successor step.  There are unique $w_n\in A$ with
\[
 (\pi_{\theta,\omega_1})_*w_n=\sigma_n,
 \qquad \|w_n\|=\|\sigma_n\|=2.
\]
Their supports are pairwise disjoint: an atom belonging to two of them would project to an atom in the two corresponding disjoint supports of the $\sigma_n$.  Put $b=\theta+1$.  For each selected index $t_m$, define
\[
 \tau_m=(\pi_{b,\omega_1})_*w_{t_m}.
\]
Composition of the projections and their norm contractivity give
\begin{equation}\label{eq:norm-preserving-hinge}
 2=\|\sigma_{t_m}\|
 =\|(\pi_{\theta,b})_*\tau_m\|
 \leqslant\|\tau_m\|
 \leqslant\|w_{t_m}\|=2.
\end{equation}
Hence $\tau_m/2$ is a norm-preserving lift of $\sigma_{t_m}/2$.  Lemma~\ref{lem:lifts} shows that it is concentrated on
$\pi_{\theta,b}^{-1}(\supp\sigma_{t_m})$.
The successor construction makes every fibre over $\supp\sigma_{t_m}$ a singleton, because that support misses $F_\theta$.  Thus the projection on this finite preimage is a bijection, and its prescribed push-forward fixes the coefficient at every point.  It follows that
\begin{equation}\label{eq:identified-lift}
 (\pi_{b,\omega_1})_*w_{t_m}
 =\tau_m=\widehat\sigma_{t_m}.
\end{equation}

Fix any $k<\omega$.  Reindexing the norm-one final measures
\[
 \left(\frac12w_{t_{m_k(n)}}\right)_{n<\omega}
 \subseteq A
\]
according to \eqref{eq:birth-sequence}, identity \eqref{eq:identified-lift} shows that their projections to $K_b$ are exactly the sequence $(a_{(\theta,k),n})_{n<\omega}$.  Lemma~\ref{lem:final-closure} now gives
\[
 \mu_D\in\overline A^{\,w^*}
 \qquad(D\subseteq K_\theta\text{ clopen}).
\]

Take the measure $\gamma$ constructed in Section~\ref{sec:coupling} for this prediction.  Its mass is $\lambda_\theta(F_\theta)>0$, so its support is nonempty.  Choose $(y_0,y_1)\in\supp\gamma$.  Lemma~\ref{lem:coupling} gives
\[
 \eta=\delta_{y_1}-\delta_{y_0}
 \in\overline A^{\,w^*}\cap L(K).
\]
Because $\supp\gamma\subseteq R$, relation \eqref{eq:initial-relation} implies $y_0\neq y_1$ and
$\pi_{\theta,\omega_1}(y_0)=\pi_{\theta,\omega_1}(y_1)$.
Consequently $\eta\neq0$ whereas $(\pi_{\theta,\omega_1})_*\eta=0$.  Push-forward is injective on $A\subseteq L(S)$, so $\eta\notin A$.  This proves that $A$ is not relatively weak-star closed.

Lemma~\ref{lem:dual} excludes infinite-dimensional metrisable quotients, and Theorem~B excludes infinite-dimensional separable quotients.
\end{proof}

The remaining assertions of Theorem~A follow from the topological properties proved in the next section.

\section{Further properties of the compact space}\label{sec:properties}

We first verify that the successor maps used above preserve enough of the topology of the initial Cantor space to give separability and crowdedness at the final stage.

\begin{lemma}\label{lem:irreducible-successor}
Every projection produced by Lemma~\ref{lem:simultaneous-successor} is irreducible: no proper closed subset of its domain maps onto its range.
\end{lemma}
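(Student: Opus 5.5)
We must show that $\pi:K'\to K$ is irreducible, where $K'=((K\setminus U)\times\{0\})\cup((U\cup F)\times\{1\})$, $U\cap F=\varnothing$ and $\overline U=U\cup F$. The standard criterion is this: a closed surjection $\pi$ is irreducible if and only if every nonempty open subset $W\subseteq K'$ contains a full fibre $\pi^{-1}\{x\}$. Indeed, if $Z\subsetneq K'$ is closed with $\pi[Z]=K$, then $W=K'\setminus Z$ is nonempty open and contains no full fibre. So I would fix a nonempty open $W\subseteq K'$ and find a point $x\in K$ with $\pi^{-1}\{x\}\subseteq W$. The invalid-prediction case, a homeomorphism, is trivially irreducible, but the statement concerns only the successor lemma.

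The plan is to use the clopen decomposition $K'=K'_0\cup K'_1$ and pick a point $(z,e)\in W$. There are three cases according to where $z$ lies.

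\emph{Case $z\in U$.} Then $e=1$. Since $K'_1\cap\pi^{-1}(U)=U\times\{1\}$ is open in $K'$, the set $W\cap (U\times\{1\})$ is a nonempty open set. Its points have singleton fibres, so any of them works.

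\emph{Case $z\in K\setminus(U\cup F)$.} Then $e=0$. The set $K\setminus\overline U$ is open in $K$, and $(K\setminus\overline U)\times\{0\}=\pi^{-1}(K\setminus\overline U)$ is open in $K'$. Again every point of $W$ inside this set has a singleton fibre.

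\emph{Case $z\in F$.} There are two subcases. If $e=1$, then $W$ contains a neighbourhood of $(z,1)$ of the form $(V\cap\overline U)\times\{1\}$ with $V$ open in $K$ containing $z$. Because $z\in\overline U$, the set $V$ meets $U$, and a point $x\in V\cap U$ has fibre $\{(x,1)\}\subseteq W$. If $e=0$, then $W$ contains $(V\setminus U)\times\{0\}$ for some open $V\ni z$. I need a point of $V\setminus \overline U$, that is, a point of $V$ outside $U\cup F$. This is the main obstacle: it requires $F$ not to be contained in the interior of $K\setminus U$ relative to the remainder. Equivalently, every neighbourhood of a point of $F$ must contain points outside $U\cup F$.

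To handle this obstacle I would return to the construction in Lemma~\ref{lem:simultaneous-successor}. The sets $U_m$ are small clopen neighbourhoods of the finite sets $T_m$, disjoint from $F$. I would aim to show that the sets $L_m$ supply points near $F$ that lie outside $U\cup F$. The supports of $q_{t_m}$ inside $C_m$ approach $\lambda$ as well, since $q_n\to\lambda$, so they accumulate at every point of $F=\supp(\lambda\restr F)$. The points of $\supp q_{t_m}$ lie outside $U$: they are in $L_m$ or in $P_m$, but $P_m\cap C_m=\varnothing$. They are also not in $F$. Concretely, given $V_0$ clopen with $z\in V_0$, I would use $q_{t_m}(C_m\cap V_0)\to\lambda(F\cap V_0)>0$. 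This convergence follows from $\sigma_{t_m}(\one_{V_0})$ and~\eqref{eq:fresh-rank3} with $D_{\kappa(m)}=K$, together with $\lambda(C_m\setminus F)\to0$ and $p_{t_m}(C_m\cap V_0)$ matching. It yields a point of $\supp q_{t_m}\cap C_m\cap V_0$. That point lies in $K\setminus(U\cup F)$, because later $U_\ell$ avoid $L_m$ and earlier ones avoid $C_m$. Its fibre is then $\{(x,0)\}\subseteq W$. If this bookkeeping fails, I would instead verify that every nonempty open set has a singleton fibre in it, using the fact that $F$ has empty interior in $K$, which the $q$-points again witness.
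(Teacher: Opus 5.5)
Your reduction is the paper's: irreducibility follows from density of the singleton fibres in $K'$. Your case analysis correctly reduces this to the single claim $F\subseteq\overline{K\setminus(U\cup F)}$, to be witnessed by points of $\supp q_{t_m}$ lying in $L_m$. The gap is in how you produce such a point.

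You assert $q_{t_m}(C_m\cap V_0)\to\lambda(F\cap V_0)$, but nothing in the construction controls $q_{t_m}$ on $C_m$:
\begin{itemize}
\item Estimates \eqref{eq:fresh-rank1} and \eqref{eq:fresh-rank2} bound $\sigma_{t_m}$ only on the test functions $f_j$ and $f_j\one_{P_m}$ with $j\leqslant m$.
\item Estimate \eqref{eq:fresh-rank3} controls only $p_{t_m}$ on $C_m\cap D$.
\item The indicator $\one_{C_m\cap V_0}$ is some $f_j$, but its index need not be at most $m$.
\item Weak-star convergence $q_n\to\lambda$ gives $q_n(C_m\cap V_0)\to\lambda(C_m\cap V_0)$ only for fixed $m$ as $n\to\infty$. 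It says nothing along the diagonal $n=t_m$, since $t_m$ is only required to exceed finitely many other thresholds.
\end{itemize}
From $\sigma_{t_m}(V_0)\approx0$ and $\sigma_{t_m}(V_0\cap P_m)\approx0$ you only get $\sigma_{t_m}(V_0\cap C_m)\approx-\sigma_{t_m}(V_0\setminus(P_m\cup C_m))$, which is uncontrolled. So $q_{t_m}$ may put its mass near $F$ but just outside $C_m$.

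Your fallback via empty interior of $F$ does not help either. At $(z,0)$ with $z\in F$ you need nearby points of $K\setminus\overline U$. Points of $U$ near $z$ have fibres in $K'_1$, which do not come near $(z,0)$.

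The repair is to drop $C_m$ and localise to $V_0\setminus P_m$, as the paper does. Take $m$ large with $D_{\kappa(m)}=K$ and $\one_{V_0}$ among $f_0,\ldots,f_m$. Using $C_m\cap P_m=\varnothing$, $F\subseteq C_m$ and \eqref{eq:fresh-rank1}--\eqref{eq:fresh-rank3},
\begin{align*}
q_{t_m}(V_0\setminus P_m)
&=p_{t_m}(V_0\setminus P_m)-\sigma_{t_m}(V_0)+\sigma_{t_m}(V_0\cap P_m)\\
&\geqslant p_{t_m}(C_m\cap V_0)-2\cdot2^{-m}
\geqslant\lambda(F\cap V_0)-3\cdot2^{-m}>0.
\end{align*}
Take a point of $\supp q_{t_m}$ in $V_0\setminus P_m$. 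It is not in $T_m\subseteq\supp p_{t_m}$. Since $\supp\sigma_{t_m}\setminus(P_m\cup T_m)\subseteq L_m$, it lies in $L_m$. By construction $L_m$ misses $F$ and every $U_\ell$, so the point lies in $V_0\setminus(U\cup F)$. With this replacement your argument goes through and coincides with the paper's proof.
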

\begin{proof}
Retain the notation of that lemma. We already know $F\subseteq\overline U$. We claim also that
\begin{equation}\label{eq:lower-boundary}
F\subseteq\overline{K\setminus(U\cup F)}.
\end{equation}
Fix $x\in F$ and a clopen neighbourhood $V$ of $x$. Then $\lambda(F\cap V)>0$ by \eqref{eq:F-properties}. Consider sufficiently large steps $m$ for which $D_{\kappa(m)}=K$ and $\one_V$ occurs among $f_0,\ldots,f_m$. Since $C_m\cap P_m=\varnothing$, equations \eqref{eq:fresh-rank1}--\eqref{eq:fresh-rank3} give
\begin{align*}
q_{t_m}(V\setminus P_m)
&=p_{t_m}(V\setminus P_m)-\sigma_{t_m}(V)
                                  +\sigma_{t_m}(V\cap P_m)\\
&\geqslant p_{t_m}(C_m\cap V)-2\cdot2^{-m}\\
&\geqslant\lambda(F\cap V)-3\cdot2^{-m}>0.
\end{align*}
Choose a point of $\supp q_{t_m}$ in $V\setminus P_m$. It does not belong to $T_m\subseteq\supp p_{t_m}$, so it belongs to $L_m$. By construction, $L_m\cap F=\varnothing$ and $L_m\cap U_j=\varnothing$ for every $j<\omega$. Our point therefore lies in $V\setminus(U\cup F)$, proving \eqref{eq:lower-boundary}.

The singleton fibres of $\pi$ are those over $K\setminus F$. They are dense in $K'$: the points of $U\times\{1\}$ are dense at every $(x,1)$ with $x\in F$ because $F\subseteq\overline U$, and the points of $(K\setminus(U\cup F))\times\{0\}$ are dense at every $(x,0)$ with $x\in F$ by \eqref{eq:lower-boundary}. A closed subset of $K'$ mapping onto $K$ must contain every singleton fibre and hence their closure $K'$. This proves irreducibility.
\end{proof}

Irreducibility passes to the limit and transfers separability and crowdedness from the initial Cantor space.

\begin{proposition}\label{prop:separable-crowded}
The space $K$ constructed above is separable and crowded.
\end{proposition}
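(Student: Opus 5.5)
The plan is to show that every bonding map $\pi_{\omega,\alpha}:K_\alpha\to K_\omega=2^\omega$, for $\omega\leqslant\alpha\leqslant\omega_1$, is irreducible. Separability and crowdedness then follow by standard facts about irreducible maps onto $2^\omega$.

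First, I will prove irreducibility by transfinite induction on $\alpha$. At invalid successor stages the projection is a homeomorphism. At valid successor stages Lemma~\ref{lem:irreducible-successor} applies, and a composition of irreducible surjections is irreducible. At a limit stage $\rho\leqslant\omega_1$, suppose $H\subsetneq K_\rho$ is closed with $\pi_{\omega,\rho}[H]=K_\omega$. Pick $y\in K_\rho\setminus H$ and a basic neighbourhood of $y$ disjoint from $H$. Since $K_\rho$ is the inverse limit with coordinate projections, this neighbourhood can be taken to be a cylinder $\pi_{\beta,\rho}^{-1}(V)$ with $\beta<\rho$ and $V$ open in $K_\beta$. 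Then $\pi_{\beta,\rho}[H]$ is closed, misses $\pi_{\beta,\rho}(y)\in V$, and still maps onto $K_\omega$. This contradicts irreducibility of $\pi_{\omega,\beta}$, which holds by the induction hypothesis.

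Second, I will use the following consequence of irreducibility for a surjection $\pi:X\to Y$ of compact spaces: for every nonempty open $W\subseteq X$, the set $\pi^\#(W)=\{y:\pi^{-1}(y)\subseteq W\}$ is open and nonempty. It is nonempty because otherwise $X\setminus W$ would be a proper closed set mapping onto $Y$. For separability, take a countable dense set $\{d_j\}\subseteq 2^\omega$ and choose $z_j\in K$ with $\pi_{\omega,\omega_1}(z_j)=d_j$. Given a nonempty open $W\subseteq K$, the open set $\pi^\#(W)$ contains some $d_j$, so $z_j\in W$. Hence $\{z_j\}$ is dense.

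Third, for crowdedness, suppose $\{z\}$ is open in $K$. Then $\pi^\#(\{z\})$ is a nonempty open subset of $2^\omega$ whose points all have fibre $\{z\}$. Since $\pi$ is surjective, that open set must equal the single point $\pi(z)$, which is impossible because $2^\omega$ is crowded.

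The main obstacle is the limit step of the irreducibility induction, in particular making sure basic open sets of the inverse limit are cylinders pulled back from a single earlier stage. This holds because the maps are coordinate restrictions, and it is the point I would check most carefully, including at $\rho=\omega_1$.
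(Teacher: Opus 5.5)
Your proposal is correct and follows essentially the same route as the paper. You prove by transfinite induction that $\pi_{\omega,\rho}$ is irreducible: at successors it is a homeomorphism or covered by Lemma~\ref{lem:irreducible-successor}, and at limits, including $\rho=\omega_1$, you use cylinders $\pi_{\beta,\rho}^{-1}(V)$. You then apply the criterion that $Y\setminus\pi[X\setminus W]$ is nonempty and open for nonempty open $W$ to get both the countable dense set and the absence of isolated points. The only difference is cosmetic: you phrase the limit step with a proper closed set $H$ instead of the equivalent open-set criterion the paper uses.
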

\begin{proof}
For a continuous surjection $r:X\to Y$ between compact Hausdorff spaces, irreducibility is equivalent to the following condition: every nonempty open $V\subseteq X$ contains $r^{-1}(W)$ for some nonempty open $W\subseteq Y$. Indeed, take $W=Y\setminus r[X\setminus V]$; it is open by compactness and nonempty precisely when the proper closed set $X\setminus V$ fails to map onto $Y$. This condition also shows that a composition of irreducible maps is irreducible.

We prove by transfinite induction that $\pi_{\omega,\rho}:K_\rho\to2^\omega$ is irreducible for every $\omega\leqslant\rho\leqslant\omega_1$. The assertion is immediate at $\rho=\omega$. At a successor, the new bonding map is irreducible by Lemma~\ref{lem:irreducible-successor}, or is a homeomorphism in the invalid case, and composition applies. At a limit $\rho$, a nonempty open subset of $K_\rho$ contains a nonempty cylinder $\pi_{\beta,\rho}^{-1}(V)$ for some $\omega\leqslant\beta<\rho$ and nonempty open $V\subseteq K_\beta$. The inductive hypothesis supplies a nonempty open $W\subseteq2^\omega$ with $\pi_{\omega,\beta}^{-1}(W)\subseteq V$. Consequently $\pi_{\omega,\rho}^{-1}(W)$ lies in the original open set, as required. This argument includes the final limit $\rho=\omega_1$.

Put $r=\pi_{\omega,\omega_1}$. Choose a countable dense subset $D\subseteq2^\omega$ and one point $x_d\in r^{-1}(d)$ for each $d\in D$. The open-set criterion just proved shows that every nonempty open subset of $K$ contains some $x_d$, so these points form a countable dense set. If $x\in K$ were isolated, that criterion applied to $\{x\}$ would give a nonempty open $W\subseteq2^\omega$ with $r^{-1}(W)\subseteq\{x\}$. Surjectivity would force $W=\{r(x)\}$, contradicting the absence of isolated points in the Cantor space.
\end{proof}

The quotient conclusion also excludes the two subspaces appearing in the definition of an Efimov space.

\begin{corollary}\label{cor:efimov}
The space $K$ is an Efimov space and has weight exactly $\aleph_1$.
\end{corollary}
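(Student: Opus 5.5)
The plan is to derive the Efimov property from the quotient assertions already proved in Section~\ref{sec:finish}, using the results of \cite{BKS2018} recalled in the introduction, and to obtain the weight by a short cardinality argument.

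\emph{Efimov property.} The space $K$ is infinite, since it maps onto $2^\omega$. Suppose $K$ contained a nontrivial convergent sequence or a copy of $\beta\N$. By \cite[Theorem~1 and Corollary~3]{BKS2018}, $C_p(K)$ would then have an infinite-dimensional Hausdorff metrisable quotient. This contradicts the quotient assertions of Theorem~A, so $K$ is an Efimov space. No direct topological analysis of sequences in the inverse limit is needed.

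\emph{Upper bound on the weight.} We have $K\subseteq2^{\omega_1}$, and the cylinders determined by finitely many coordinates form a base of size $\aleph_1$. Hence the weight of $K$ is at most $\aleph_1$.

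\emph{Lower bound on the weight.} Suppose $K$ had countable weight. As a compact Hausdorff space, $K$ would then be metrisable. An infinite compact metrisable space contains a nontrivial convergent sequence: pick any non-isolated point, which exists because $K$ is crowded (Proposition~\ref{prop:separable-crowded}), or simply use sequential compactness applied to an injective sequence. This contradicts the Efimov property just established. Hence the weight is exactly $\aleph_1$.

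The main point to check is that the cited results of \cite{BKS2018} apply verbatim to compact Hausdorff spaces and yield Hausdorff metrisable quotients in the sense of Lemma~\ref{lem:dual}. The introduction states exactly this, so I expect no real obstacle.
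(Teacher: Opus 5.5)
Your proposal is correct and follows essentially the same route as the paper. The Efimov property comes from \cite[Corollary~3]{BKS2018} combined with the exclusion of infinite-dimensional metrisable quotients in Section~\ref{sec:finish}. The weight is pinned down by $K\subseteq2^{\omega_1}$, together with the fact that a compact space of countable weight is metrisable and, being infinite, would contain a nontrivial convergent sequence.
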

\begin{proof}
An infinite compact space containing a nontrivial convergent sequence or a copy of $\beta\N$ has an infinite-dimensional metrisable quotient of its pointwise function space, by \cite[Corollary~3]{BKS2018}. The conclusion proved in Section~\ref{sec:finish} therefore excludes both subspaces of $K$. An infinite compact metrisable space contains a nontrivial convergent sequence, so $K$ is not metrisable. Since $K\subseteq2^{\omega_1}$, its weight is at most $\aleph_1$; a compact Hausdorff space of countable weight is metrisable. Thus its weight is $\aleph_1$.
\end{proof}

The absence of convergent sequences determines the cardinality through a binary tree of closed $G_\delta$ sets.

\begin{proposition}\label{prop:cardinality}
The cardinality of $K$ is $2^{\aleph_1}$.
\end{proposition}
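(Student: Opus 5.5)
The upper bound is immediate: $K\subseteq2^{\omega_1}$, so $|K|\leqslant2^{\aleph_1}$. For the lower bound I would use the classical fact that a compact Hausdorff space with no nontrivial convergent sequences has large cardinality. The aim is to embed a binary tree of height $\omega_1$ of nonempty closed $G_\delta$ sets, whose branches give $2^{\aleph_1}$ distinct points. By Corollary~\ref{cor:efimov}, $K$ contains no nontrivial convergent sequence. Every point $x\in K$ also has uncountable character. Indeed, if $\{x\}$ were a $G_\delta$ in the compact space $K$, then $x$ would have a countable neighbourhood base. Since $K$ is crowded (Proposition~\ref{prop:separable-crowded}), $x$ would then be the limit of a nontrivial sequence, which is impossible. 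Hence no point of $K$ is a $G_\delta$ point.

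The same argument applies inside every nonempty closed $G_\delta$ subset $H\subseteq K$. Such an $H$ is compact and, being closed in $K$, has no convergent sequences. I would show that $H$ contains at least two points: if $H=\{x\}$, then $x$ would be a $G_\delta$ point of $K$, contradicting the previous paragraph. So every nonempty closed $G_\delta$ set $H$ has two distinct points $y_0,y_1$. Zero-dimensionality gives disjoint clopen sets $V_0\ni y_0$ and $V_1\ni y_1$. The sets $H\cap V_0$ and $H\cap V_1$ are then disjoint nonempty closed $G_\delta$ subsets of $H$.

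Next I would build, by recursion on $\alpha<\omega_1$, nonempty closed $G_\delta$ sets $H_s$ for $s\in2^{<\omega_1}$. Take $H_\varnothing=K$. At a successor, set $H_{s^\frown e}=H_s\cap V_e$ as above. At a countable limit $\alpha$, set $H_s=\bigcap_{\beta<\alpha}H_{s\restr\beta}$. This is a countable intersection of a decreasing chain of nonempty compact sets, so it is nonempty by compactness, and it is still a closed $G_\delta$ set because it is a countable intersection of $G_\delta$ sets. For each $b\in2^{\omega_1}$, the set $\bigcap_{\alpha<\omega_1}H_{b\restr\alpha}$ is nonempty by compactness and the finite intersection property. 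Choosing a point in it for each branch gives an injection $2^{\omega_1}\to K$, because distinct branches split at some node into disjoint sets. Therefore $|K|\geqslant2^{\aleph_1}$.

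The main point to check carefully is that no closed $G_\delta$ set is a singleton. This uses the standard lemma that in a compact Hausdorff space a closed $G_\delta$ point has countable character, together with crowdedness, to produce a nontrivial convergent sequence. I would state that lemma explicitly. Given $\{x\}=\bigcap_n O_n$, choose clopen $W_n$ decreasing with $x\in W_n\subseteq O_n$. Compactness shows that $(W_n)$ is a neighbourhood base at $x$. Then pick $z_n\in W_n\setminus\{x\}$; these exist because $K$ is crowded, and $z_n\to x$.
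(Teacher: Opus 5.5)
Your proposal is correct and follows essentially the same route as the paper: no $G_\delta$ points (via a countable local base, crowdedness, and Corollary~\ref{cor:efimov}), then a binary tree of nonempty closed $G_\delta$ sets of height $\omega_1$, with the upper bound coming from $K\subseteq2^{\omega_1}$. The differences are cosmetic. You use zero-dimensionality to split with clopen sets and to choose clopen $W_n$, where the paper uses zero sets of an Urysohn function and open $V_n$ with nested closures.
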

\begin{proof}
There are no $G_\delta$ points in $K$. To see this, suppose $\{x\}=\bigcap_n O_n$ with each $O_n$ open. By regularity choose open neighbourhoods $V_n$ of $x$ such that $\overline V_0\subseteq O_0$ and $\overline V_{n+1}\subseteq V_n\cap O_{n+1}$. Compactness shows that $(V_n)$ is a local base at $x$: if $O$ is a neighbourhood of $x$ and no $\overline V_n$ is contained in $O$, the nested nonempty compact sets $\overline V_n\setminus O$ have an intersection, contradicting $\bigcap_n\overline V_n=\{x\}$. In a crowded Hausdorff space every nonempty open set is infinite. We can therefore choose pairwise distinct $x_n\in V_n\setminus\{x\}$, giving a nontrivial convergent sequence, contrary to Corollary~\ref{cor:efimov}.

It follows that every nonempty closed $G_\delta$ subset of $K$ has at least two points. Construct nonempty closed $G_\delta$ sets $F_s$, for $s\in2^{<\omega_1}$, starting with $F_\varnothing=K$. This is a set-sized recursion: $|2^{<\omega_1}|=2^{\aleph_0}$, since each countable level has cardinality at most $2^{\aleph_0}$, there are $\aleph_1\leqslant2^{\aleph_0}$ levels, and the level $2^\omega$ has cardinality $2^{\aleph_0}$. Given $F_s$, choose distinct $x_0,x_1\in F_s$ and a continuous $u:K\to[0,1]$ with $u(x_e)=e$. Set
\[
F_{s^\frown0}=F_s\cap u^{-1}([0,1/3]),\qquad
F_{s^\frown1}=F_s\cap u^{-1}([2/3,1]).
\]
These are disjoint nonempty closed $G_\delta$ sets: inverse images of the displayed closed intervals are zero sets and hence $G_\delta$. At a nonzero countable limit $\rho$, put $F_s=\bigcap_{\xi<\rho}F_{s\restriction\xi}$ for $s\in2^\rho$. This is a nonempty closed $G_\delta$ set by compactness and countability, so the recursion continues. For each branch $z\in2^{\omega_1}$, compactness gives a point in $\bigcap_{\xi<\omega_1}F_{z\restriction\xi}$. Distinct branches give disjoint intersections, yielding $2^{\aleph_1}$ distinct points of $K$. The reverse inequality follows from $K\subseteq2^{\omega_1}$.
\end{proof}

The classical theorem of Josefson and Nissenzweig \cite{Josefson,Nissenzweig} supplies norm-one weak-star null sequences in the dual of every infinite-dimensional Banach space; finite support is an additional requirement. Write
\[
\ell_1(X)=\left\{\sum_{x\in S}a_x\delta_x:
 S\subseteq X\text{ countable},\ \sum_{x\in S}|a_x|<\infty\right\}
\subseteq M(X).
\]
The space $C(X)$ has the \emph{$\ell_1$-Grothendieck property} if every weak-star convergent sequence in $\ell_1(X)$ is weakly convergent in $M(X)$.

\begin{corollary}\label{cor:fsjn}
The space $K$ admits no finitely supported Josefson--Nissenzweig sequence, and $C(K)$ has the $\ell_1$-Grothendieck property.
\end{corollary}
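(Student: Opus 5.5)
The proof splits into the two assertions, and the first feeds into the second.

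For the first assertion I would argue by contradiction. Suppose $(v_n)\subseteq L(K)$ satisfies $\|v_n\|=1$ and $v_n\to0$ weak-star. Theorem~B, via the argument in its proof, or directly \cite[Theorem~1]{BKS2019}, then gives a quotient of $C_p(K)$ isomorphic to $(c_0)_p$. That quotient is an infinite-dimensional Hausdorff metrisable quotient, which contradicts the quotient assertions of Theorem~A proved in Section~\ref{sec:finish}. This step is purely a citation. I would only check that the cited theorem needs nothing beyond a finitely supported norm-one weak-star null sequence, which is exactly the hypothesis used in the first case of the proof of Theorem~B.

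For the $\ell_1$-Grothendieck property I would reduce to the finitely supported case. Let $(\mu_n)\subseteq\ell_1(K)$ converge weak-star to $\mu$. Suppose it is not weakly convergent in $M(K)$. By the standard Grothendieck-type characterisation, it suffices to show that a weak-star null sequence in $\ell_1(K)$ is weakly null. Since weak-star convergent sequences are bounded, we may pass to $\mu_n-\mu$; but $\mu$ need not lie in $\ell_1(K)$, so I would be careful here.

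A cleaner route is to use the known equivalence (as in \cite{KSZ}) between the $\ell_1$-Grothendieck property and the absence of finitely supported Josefson--Nissenzweig sequences. Concretely, assume $(\mu_n)\subseteq\ell_1(K)$ is weak-star null but not weakly null. By the Dieudonné--Grothendieck criterion for weak convergence in $M(K)$, a subsequence then admits pairwise disjoint open sets $V_n$ with $|\mu_n(V_n)|\geqslant\eps$.

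Next, truncate each $\mu_n$ to a finitely supported measure $\nu_n$ with $\|\mu_n-\nu_n\|<2^{-n}$. The $\nu_n$ are still weak-star null and keep the disjoint-mass witness $|\nu_n(V_n)|\geqslant\eps/2$. From this I would build block differences of normalised finitely supported measures that are norm-bounded below and weak-star null. After normalising, these give a finitely supported Josefson--Nissenzweig sequence, contradicting the first part.

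The main obstacle is the step producing norm bounded below. Weak-star null finitely supported $\nu_n$ could a priori have norms tending to zero; this is excluded here because $|\nu_n(V_n)|\geqslant\eps/2$ forces $\|\nu_n\|\geqslant\eps/2$. So $\nu_n/\|\nu_n\|$ is already norm one and weak-star null, since the $\nu_n$ are bounded and bounded below in norm. The remaining care is to justify the disjoint-open-sets characterisation. Rather than prove that characterisation, I would cite it in the form stated in \cite{KSZ} relating the two properties.
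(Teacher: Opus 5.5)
Your proposal is correct and follows the paper. The first assertion comes, exactly as in the paper, from \cite[Theorem~1]{BKS2019} combined with the quotient assertions of Theorem~A, and for the second assertion the paper simply cites \cite[Theorem~4.1]{KSZ}, which is your eventual fallback. Your optional direct sketch also works, and more simply than you set it up: any weak-star null sequence in $\ell_1(K)$ that is not norm null already yields, after truncation and normalisation, a finitely supported Josefson--Nissenzweig sequence, so the Dieudonn\'e--Grothendieck criterion is not needed. Applying this to the differences $\mu_{m_k}-\mu_{n_k}$, which stay in $\ell_1(K)$, shows that every weak-star convergent sequence in $\ell_1(K)$ is norm Cauchy and hence norm convergent. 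This settles the point you left open about the limit $\mu$ possibly lying outside $\ell_1(K)$.
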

\begin{proof}
By \cite[Theorem~1]{BKS2019}, the existence of such a sequence is equivalent to $C_p(K)$ having a quotient isomorphic to $(c_0)_p$, where $(c_0)_p$ is $c_0$ with the topology inherited from $\R^\N$. This would be an infinite-dimensional Hausdorff metrisable quotient. The second assertion follows from \cite[Theorem~4.1]{KSZ}.
\end{proof}

The full Grothendieck property of $C(K)$ and the possible existence of an isomorphic copy of $\ell_\infty$ in $C(K)$ are not decided by this argument.

\section{An obstruction to one-point resolutions}\label{sec:obstruction}

The following elementary example explains why a single double fibre need not disturb the convergence of a sequence of differences of probability measures.

\begin{proposition}\label{prop:one-point}
On the Cantor space $X=2^\omega$ there are finitely supported probabilities $p_n,q_n$ such that all their supports are pairwise disjoint and $p_n,q_n\to\lambda$ weak-star for the nonatomic Bernoulli probability $\lambda$. If $\pi:Y\to X$ is a continuous surjection from a compact metrisable space and all its fibres are singletons except possibly one double fibre, then every choice of lifts $\widehat p_n,\widehat q_n$ obtained by placing each atom over its original point and retaining its coefficient satisfies
\[
\frac12(\widehat p_n-\widehat q_n)\xrightarrow{w^*}0,
\qquad
\left\|\frac12(\widehat p_n-\widehat q_n)\right\|=1.
\]
In particular, $Y$ admits a finitely supported Josefson--Nissenzweig sequence.
\end{proposition}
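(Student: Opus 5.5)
Two things need proving: a concrete construction of $p_n,q_n$ on $X=2^\omega$, and then a convergence statement that holds for every one-point resolution $\pi:Y\to X$.

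\emph{Construction.} Let $\lambda$ be the Bernoulli (Haar) measure. For each $n$, take $N_n=2^{2n+1}$ distinct points as follows. Each cylinder of length $n$ receives two points, both fixed to start with a tag coordinate block used for no other $n$. Let $p_n$ be uniform on one point per cylinder, and $q_n$ uniform on the other point per cylinder.

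\emph{Disjointness.} Choose the points so that the supports of all $p_n,q_n$ are pairwise disjoint. There are only finitely many points at each step, so we can simply avoid all previously chosen points; the tag block is not even needed.

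\emph{Convergence to $\lambda$.} For a clopen set determined by the first $j$ coordinates and $n\ge j$, we have $p_n(V)=q_n(V)=\lambda(V)$ exactly. Such sets have dense span in $C(X)$, and all the measures are probabilities, so $p_n,q_n\to\lambda$ weak-star. Also $\lambda$ is nonatomic. Consequently $\sigma_n=p_n-q_n\to0$ weak-star and $\|\sigma_n\|=2$.

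\emph{Lifts.} Let $x^*$ be the possible double-fibre point, and let $\widehat p_n,\widehat q_n$ be any lifts as described. The norm identity holds because distinct atoms in $X$ have disjoint fibres, hence distinct lifted atoms. This holds even if some atom sits at $x^*$ and is placed at either of its two preimages. Thus $\|\widehat p_n-\widehat q_n\|=\|p_n-q_n\|=2$.

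\emph{Weak-star convergence.} Fix $h\in C(Y)$. The key step is to handle $h$ not of the form $g\circ\pi$.

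If there is no double fibre, $\pi$ is a homeomorphism and there is nothing to prove. Otherwise write $\pi^{-1}(x^*)=\{y_0,y_1\}$. Choose $g\in C(X)$ with $g\circ\pi$ agreeing with $h$ at $y_0$. Then $h-g\circ\pi$ vanishes at $y_0$ and equals $c=h(y_1)-g(x^*)$ at $y_1$. Fix $\eps>0$. By continuity there are disjoint open neighbourhoods $O_0\ni y_0$ and $O_1\ni y_1$ such that $|h-g\circ\pi|<\eps$ on $O_0$ and $|h-g\circ\pi-c|<\eps$ on $O_1$.

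Since $\pi$ is closed, $W=X\setminus\pi[Y\setminus(O_0\cup O_1)]$ is an open neighbourhood of $x^*$. Off $\pi^{-1}(W)$, the function $h-g\circ\pi$ equals $k\circ\pi$ for some continuous $k$. This holds because $\pi$ restricted to $\pi^{-1}(X\setminus W)$ is a homeomorphism; extend by Tietze. Then
\[
|\widehat\sigma_n(h)|\le|\sigma_n(g)|+|\sigma_n(\tilde k)|+(2\eps+2|c|)\bigl(p_n+q_n\bigr)(W'),
\]
where $\tilde k$ is a continuous adjustment and $W'$ is a small clopen neighbourhood of $x^*$ inside $W$. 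The main point is that $p_n(W'),q_n(W')\to\lambda(W')$ as $n\to\infty$, and $\lambda(W')$ can be made arbitrarily small because $\lambda$ is nonatomic.

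Concretely: pick clopen $W'\ni x^*$ with $\lambda(W')<\eps$. Write $h=\one_{\pi^{-1}(X\setminus W')}h+\one_{\pi^{-1}(W')}h$. The first summand is continuous and factors through $\pi$, so its $\widehat\sigma_n$-value tends to $0$. The second is bounded by $\|h\|(p_n+q_n)(W')\to2\|h\|\lambda(W')<2\|h\|\eps$. Hence
\[
\limsup_n|\widehat\sigma_n(h)|\le2\|h\|\eps,
\]
so $\tfrac12\widehat\sigma_n\to0$ weak-star. This clopen-cutting argument replaces the messier estimate above.

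\emph{Conclusion.} The final claim follows because $\tfrac12\widehat\sigma_n$ is finitely supported, has norm one, and is weak-star null. The step needing care is ensuring that the first summand factors through $\pi$; this holds because off $W'$ the map $\pi$ is injective, so it restricts to a homeomorphism of compact clopen pieces.
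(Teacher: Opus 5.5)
Your proof is correct, but it reaches the weak-star convergence by a different route from the paper.

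\textbf{The paper's route.} The paper never estimates $\widehat\sigma_n(h)$ directly. It first shows that $\lambda$ has a \emph{unique} probability lift to $Y$. The inverse of $\pi$ is continuous off the exceptional point, so each $h\in C(Y)$ induces a bounded Borel $g$ on $X$. Since $\lambda(\{x^*\})=0$, this forces $\nu(h)=\int g\,d\lambda$ for every lift $\nu$. A cluster-point argument in the compact metrisable space $P(Y)$ then shows that $\widehat p_n$ and $\widehat q_n$ each converge to that common lift.

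\textbf{Your route.} Your clopen-cutting estimate uses the zero-dimensionality of $X$. Take a clopen $W'\ni x^*$ with $\lambda(W')<\eps$. The function $\one_{\pi^{-1}(X\setminus W')}h$ is continuous and equals $\tilde k\circ\pi$. This holds because $\pi$ is a homeomorphism of the compact set $\pi^{-1}(X\setminus W')$ onto $X\setminus W'$, and extending by $0$ on the clopen $W'$ preserves continuity. Hence its $\widehat\sigma_n$-value is $\sigma_n(\tilde k)\to0$. The remainder is bounded by $\|h\|(p_n+q_n)(W')\to2\|h\|\lambda(W')$.

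\textbf{Comparison.} Your argument is more elementary and quantitative. It also makes transparent that what matters is that the non-singleton fibres lie over a closed $\lambda$-null set. That is precisely what fails in Lemma~\ref{lem:simultaneous-successor}, where $\lambda(F)>0$. The paper's argument avoids clopen cut-offs in the base and identifies the common limit of $\widehat p_n$ and $\widehat q_n$ as the unique lift of $\lambda$, not merely the vanishing of their difference. Your convergence check via the exact identity $p_n([t])=q_n([t])=2^{-|t|}$ for $|t|\leqslant n$ is a fine substitute for the paper's oscillation bound.

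\textbf{Cosmetic fixes.} Each level uses $2^{n+1}$ points, not $2^{2n+1}$. The intermediate ``messier estimate'' paragraph, with its undefined $\tilde k$ and $W$, should simply be deleted, since the clopen-cutting paragraph supersedes it.
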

\begin{proof}
For each $n$ and each binary word $s$ of length $n$, choose two points $x^0_{n,s},x^1_{n,s}$ in its basic cylinder $[s]$, choosing every point distinct from all points chosen earlier. This is possible since every cylinder is infinite and at each step only finitely many points have been used. Set
\[
p_n=2^{-n}\sum_{s\in2^n}\delta_{x^0_{n,s}},\qquad
q_n=2^{-n}\sum_{s\in2^n}\delta_{x^1_{n,s}}.
\]
For $f\in C(X)$, both $|p_n(f)-\lambda(f)|$ and $|q_n(f)-\lambda(f)|$ are bounded by $\max_{s\in2^n}\operatorname{osc}(f,[s])$, which tends to zero by uniform continuity. This proves the asserted convergence.

Let $x$ be the exceptional base point, choosing any $x\in X$ if there is no exceptional fibre. We show that $\lambda$ has a unique probability lift to $Y$. On $X\setminus\{x\}$ the inverse of the singleton-fibre map is continuous. Indeed, if $y\neq x$ and an open set $O\subseteq Y$ contains the unique point over $y$, then $X\setminus\pi[Y\setminus O]$ is an open neighbourhood of $y$ whose full preimage lies in $O$. Thus for every $h\in C(Y)$, the function $g(y)=h(\pi^{-1}(y))$ on $X\setminus\{x\}$ extends, by assigning any value at $x$, to a bounded Borel function on $X$. Any probability $\nu$ with $\pi_*\nu=\lambda$ gives mass zero to $\pi^{-1}(x)$, since $\lambda(\{x\})=0$, and therefore
\[
\nu(h)=\int_X g\,d\lambda.
\]
This identity determines $\nu$ uniquely on $C(Y)$. Existence follows, for example, by choosing finite probability lifts of the $p_n$ and taking a weak-star cluster point in the compact space $P(Y)$.

For any such choices, every cluster point of $(\widehat p_n)$ and of $(\widehat q_n)$ is a probability projecting to $\lambda$. Both sequences therefore have the same unique cluster point. Since $P(Y)$ is compact metrisable, each sequence converges to that point: otherwise a subsequence outside a fixed neighbourhood would have a different cluster point. Their difference tends to zero. Disjointness of the original supports implies disjointness of all lifted supports, so each difference has norm two, as required.
\end{proof}

The same probability measures show directly that stationarily many successor maps in our construction double a perfect uncountable set.

\begin{proposition}\label{prop:perfect-splitting-stages}
In the inverse system of Section~\ref{sec:recursion}, stationarily many valid stages $\theta<\omega_1$ have nonatomic $\lambda_\theta$ and perfect, uncountable $F_\theta$.
\end{proposition}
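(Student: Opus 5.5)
We plan to use the diamond prediction of Lemma~\ref{lem:diamond} on a carefully chosen sequence of finitely supported measures on $K$, lifting the Cantor-space probabilities of Proposition~\ref{prop:one-point}. The aim is to force the prediction to be valid at stationarily many stages, with every measure $\lambda_\theta$ obtained there equal to a pull-back of the Bernoulli measure, and hence nonatomic. For each of the finitely supported probabilities $p_n,q_n$ on $2^\omega$ from Proposition~\ref{prop:one-point}, we choose one preimage of each atom under $r=\pi_{\omega,\omega_1}$. This gives finitely supported measures $\widehat p_n,\widehat q_n$ on $K$ with the same coefficients and pairwise disjoint supports. We put $v_n=\widehat p_n-\widehat q_n$ and $A=\Span\{v_n\}$. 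Since the supports are disjoint and each $v_n$ is nonzero, $A$ is infinite dimensional.

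At every stage $\theta$ supplied by Lemma~\ref{lem:diamond} for $(v_n)$, we have $g_n^\theta=(\pi_{\theta,\omega_1})_*v_n\in L(K_\theta)$, and the span is infinite dimensional by injectivity on the supports. So the prediction is valid, and these stages form a stationary set. It remains to see what $\lambda_\theta$ is there. Lemma~\ref{lem:flat} picks $\sigma_n\in A_\theta$ with disjoint supports whose Jordan parts converge to $\lambda_\theta$. The key point is that $\lambda_\theta$ is not arbitrary: its push-forward to $2^\omega$ must be the Bernoulli measure. We will argue as follows. Each $\sigma_n$ is a finite combination $\sum_j c_j g_j^\theta$, and $\|\sigma_n\|=2$ with $\sigma_n(\one)=0$. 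Since the $g_j^\theta$ have disjoint supports and each has positive part $(\pi_{\omega,\theta})$-lifting $p_j$ and negative part lifting $q_j$, the Jordan decomposition of $\sigma_n$ is explicit: $\sigma_n^+=\sum_{c_j>0}c_j\widehat p_j+\sum_{c_j<0}|c_j|\widehat q_j$, projected to $K_\theta$, and symmetrically for $\sigma_n^-$. The norm condition gives $\sum|c_j|=1$.

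The indices $j$ used by $\sigma_n$ tend to infinity with $n$, because the construction in Lemma~\ref{lem:flat} annihilates more and more $f_i$. More simply, if some fixed $j$ recurred, the supports would not be disjoint. Hence $\sigma_n^+$ pushed to $2^\omega$ is a convex combination of $p_j,q_j$ with $j\geqslant j(n)\to\infty$. All of these converge uniformly on each $f$ to $\lambda$, with the error bounded by the oscillation estimate $\max_{s\in2^j}\operatorname{osc}(f,[s])$ from Proposition~\ref{prop:one-point}. Therefore $(\pi_{\omega,\theta})_*\lambda_\theta$ equals the Bernoulli measure, and $\lambda_\theta$ has no atoms: an atom would project to an atom.

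In the proof of Lemma~\ref{lem:simultaneous-successor}, $\lambda_\theta\restr{G}$ is then nonatomic, so $F_\theta$ is perfect and uncountable by that lemma. The main obstacle is the bookkeeping behind the claim that the Jordan parts of $\sigma_n$ push forward to convex combinations of the late $p_j,q_j$. This needs disjointness of all the supports of $\widehat p_j,\widehat q_j$, which survives projection to $K_\theta$ by injectivity on $S$, so that no cancellation occurs between different $g_j^\theta$.
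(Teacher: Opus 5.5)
Your proposal is correct and follows essentially the same route as the paper. You lift the disjointly supported Cantor probabilities once to $K$ and use Lemma~\ref{lem:diamond} to obtain stationarily many valid predictions of their span. You then note that each $\sigma_n$ uses a finite index set with $\sum_j|c_{n,j}|=1$, and that these index sets are pairwise disjoint. Your ``more simply'' argument via absence of cancellation is the right justification for this; the remark about Lemma~\ref{lem:flat} annihilating more $f_i$ does not by itself establish it. Hence the projected Jordan parts are convex combinations of late $p_j^0,q_j^0$, and $(\pi_{\omega,\theta})_*\lambda_\theta$ is the Bernoulli measure.

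Your observation that an atom of $\lambda_\theta$ would project to an atom of the Bernoulli measure is a more direct phrasing of the paper's cylinder-partition argument for nonatomicity.
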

\begin{proof}
Let $\rho$ be the Bernoulli probability on $K_\omega=2^\omega$ and choose finitely supported probabilities $p_j^0,q_j^0\to\rho$ with all supports pairwise disjoint, as in Proposition~\ref{prop:one-point}. Lift each atom once to $K=K_{\omega_1}$ and retain its coefficient, obtaining probabilities $p_j,q_j\in L(K)$. Put $v_j=p_j-q_j$ and $A=\Span\{v_j:j<\omega\}$. The initial projection is injective on the union of all these supports. Consequently, for every $\omega\leqslant\theta<\omega_1$, the measures
\[
p_j^\theta=(\pi_{\theta,\omega_1})_*p_j,\qquad
q_j^\theta=(\pi_{\theta,\omega_1})_*q_j
\]
have pairwise disjoint supports, and $v_j^\theta=p_j^\theta-q_j^\theta$ has norm two.

Lemma~\ref{lem:diamond} supplies stationarily many stages at which the predicted subspace is
$A_\theta=\Span\{v_j^\theta:j<\omega\}$. The vectors $v_j^\theta$ are linearly independent by disjointness of their supports, so every such prediction is valid. Fix one of these stages and write the sequence selected by Lemma~\ref{lem:flat} as
\[
\sigma_n=\sum_jc_{n,j}v_j^\theta,\qquad
I_n=\{j:c_{n,j}\neq0\}.
\]
The sums are finite, and disjointness of all block supports gives
$2=\|\sigma_n\|=2\sum_j|c_{n,j}|$.
Furthermore, the finite sets $I_n$ are pairwise disjoint. Indeed, if $c_{n,j}\neq0$, the entire support of $v_j^\theta$ is contained in $\supp\sigma_n$, because no other block can cancel any of its coefficients. The supports of the $\sigma_n$ are pairwise disjoint. Thus $I_n$ eventually avoids every finite subset of $\N$.

For $c\in\R$, set $c^+=\max\{c,0\}$ and $c^-=\max\{-c,0\}$. The Jordan parts satisfy
\[
\begin{split}
(\pi_{\omega,\theta})_*\sigma_n^+
 &=\sum_j(c_{n,j}^+p_j^0+c_{n,j}^-q_j^0),\\
(\pi_{\omega,\theta})_*\sigma_n^-
 &=\sum_j(c_{n,j}^+q_j^0+c_{n,j}^-p_j^0).
\end{split}
\]
These are convex combinations with total coefficient one. For $f\in C(2^\omega)$ and $N<\omega$, put
\[
\eps_N(f)=\sup_{j\geqslant N}
\max\{|p_j^0(f)-\rho(f)|,\ |q_j^0(f)-\rho(f)|\}.
\]
We have $\eps_N(f)\to0$. When $I_n\subseteq[N,\infty)$, the absolute difference between the evaluation of either displayed probability at $f$ and $\rho(f)$ is at most
$\sum_j|c_{n,j}|\eps_N(f)=\eps_N(f)$.
Hence both projected Jordan sequences converge weak-star to $\rho$. Their limits at stage $\theta$ are $\lambda_\theta$, so
$(\pi_{\omega,\theta})_*\lambda_\theta=\rho$.

The measure $\lambda_\theta$ is nonatomic. Indeed, the inverse images of the length-$r$ Cantor cylinders partition $K_\theta$ into finitely many sets of measure $2^{-r}$. For any Borel set of positive measure, choose $r$ with $2^{-r}$ smaller than that measure. One member of this partition cuts off a strictly smaller positive measure, ruling out an atom. The nonzero restriction of $\lambda_\theta$ used in Lemma~\ref{lem:simultaneous-successor} is therefore nonatomic, and that lemma gives a perfect, uncountable $F_\theta$. This holds at every correct prediction stage chosen above.
\end{proof}

The theorem on simple extensions recalled in Section~\ref{sec:prelim}, together with Corollary~\ref{cor:fsjn}, precludes a presentation of $K$ by a continuous inverse system starting with a singleton and using simple extensions at every successor. Proposition~\ref{prop:perfect-splitting-stages} identifies the difference directly in our system: stationarily many successor maps have uncountably many two-point fibres. At each such stage the map is also not fully closed, since the images of the disjoint closed sets $K_{\theta+1}\cap(K_\theta\times\{0\})$ and $K_{\theta+1}\cap(K_\theta\times\{1\})$ intersect in the infinite set $F_\theta$, whereas full closedness requires this intersection to be finite.

Fedorchuk's classical diamond example \cite{Fedorchuk} has the same topological properties listed in Theorem~A, but has the finitely supported Josefson--Nissenzweig property \cite[Corollary~6.12]{KSZ}. Our example therefore differs in its behaviour for finitely supported measures. Talagrand's compactum \cite{Talagrand}, whose $C(K)$ has the Grothendieck property, also admits no presentation by simple extensions, by \cite[Theorems~4.1 and 6.10]{KSZ}.

\section*{Acknowledgements}
The first-named author acknowledges the institutional support of the Mathematical Institute of the Czech Academy of Sciences (RVO: 67985840).

\end{document}